\pgfplotsset{compat = newest}
\newcommand{\Z}{{\mathbb Z}}
\newcommand{\R}{\mathbb{R}}
\newcommand{\HH}{\mathbb{H}}
\newcommand{\p}[1]{{\mathbb{P}^{#1}}}
\newcommand{\op}[1]{{\mathcal O}_{\mathbb{P}^{#1}}}
\newcommand{\ox}{{\mathcal O}_{X}}
\newcommand{\OC}{{\mathcal O}_{C}} \newcommand{\IC}{I_{C}}
\newcommand{\ch}{\operatorname{ch}}
\newcommand{\td}{\operatorname{td}}
\newcommand{\calh}{{\mathcal H}}
\newcommand{\calm}{{\mathcal M}}
\newcommand{\dbx}{D^{\rm b}(X)}
\newcommand{\coh}{\operatorname{Coh}}
\newcommand{\cohab}{\mathcal{A}^{\alpha,\beta}}
\newcommand{\labs}{\lambda_{\alpha,\beta,s}}
\newcommand{\free}[1]{\mathcal{F}_{#1}}
\newcommand{\tors}[1]{\mathcal{T}_{#1}}
\newcommand{\obeta}{{\overline{\beta}}}
\newcommand{\oalpha}{\overline{\alpha}}
\newcommand{\Ext}{\operatorname{Ext}}
\newcommand{\Hom}{\operatorname{Hom}}
\DeclareMathOperator{\im}{im}
\newcommand{\into}{\hookrightarrow}
\newcommand{\onto}{\twoheadrightarrow}
\newtheorem{theorem}{Theorem}[section]
\newtheorem{proposition}[theorem]{Proposition}
\newtheorem{lemma}[theorem]{Lemma}
\newtheorem{corollary}[theorem]{Corollary}
\theoremstyle{definition}
\newtheorem{remark}[theorem]{Remark}
\newtheorem{example}[theorem]{Example}
\newtheorem{definition}[theorem]{{\bf Definition}}
\DeclarePairedDelimiter{\floor}{\lfloor}{\rfloor}
\title[Vertical asymptotics on 3-folds]{Vertical asymptotics for Bridgeland stability conditions on 3-folds}
\author{Marcos Jardim}
\address{IMECC - UNICAMP \\ Departamento de Matem\'atica \\
Rua S\'ergio  Buarque de Holanda, 651\\ 13083-970 Campinas-SP, Brazil}
\email{jardim@ime.unicamp.br}
\author{Antony Maciocia}
\address{University of Edinburgh\\School of Mathematics\\The King's Buildings\\ Peter Guthrie Tait Road\\Edinburgh\\ EH9 3FD}
\email{A.Maciocia@ed.ac.uk}
\author{Cristian Martinez}
\address{IMECC - UNICAMP \\ Departamento de Matem\'atica \\
Rua S\'ergio  Buarque de Holanda, 651\\ 13083-970 Campinas-SP, Brazil}
\email{cristian@ime.unicamp.br}
\begin{document}

\begin{abstract}
Let $X$ be a smooth projective threefold of Picard number one for which the generalized Bogomlov-Gieseker inequality holds. We characterize the limit Bridgeland semistable objects at large volume in the vertical region of the geometric stability conditions associated to $X$ in complete generality and provide examples of asymptotically semistable objects. In the case of the projective space and $\ch^\beta(E)=(-R,0,D,0)$, we prove that there are only a finite number of nested walls in the $(\alpha,s)$-plane. Moreover, when $R=0$ the only semistable objects in the outermost chamber are the 1-dimensional Gieseker semistable sheaves, and when $\beta=0$ there are no semistable objects in the innermost chamber. In both cases, the only limit semistable objects of the form $E$ or $E[1]$ (where $E$ is a sheaf) that do not get destabilized until the innermost wall are precisely the (shifts of) instanton sheaves.
\end{abstract}
\maketitle
\tableofcontents

%%%%%%%%%%%%%%%%%%%%%%%%%%%%%%%%%%%%%%%%%%%%%%%%%%%%%%%%%%%%%%%%%%%%%%%%%%%%%
%%%%%%%%%%%%%%%%%%%%%%%%%%%%%%%%%%%%%%%%%%%%%%%%%%%%%%%%%%%%%%%%%%%%%%%%%%%%%
\newpage

\section{Introduction}

The general notion of stability introduced by Bridgeland in \cite{B07} and \cite{B08} is often referred to as a generalization of Mumford stability on curves to objects in the derived category of an arbitrary smooth projective variety. Moreover, in the case of surfaces, Bridgeland's notion of stability has been proven to include  Gieseker stability. It is precisely this identification of Gieseker stability with a Bridgeland stability condition what allows for the study of geometric properties of the Gieseker moduli spaces, by analyzing the transformations that they undergo when moving along paths in the stability manifold. On a general K3 surface, for instance, a theorem of Bayer and Macrí \cite{BM14a} proves that these variations account for the whole MMP on the Gieseker moduli. 

On threefolds, however, stability conditions are only known to exist in a handful of examples. When the Picard number is one, these examples include Fano threefolds \cite{BMT,M-p3,S-q3,Li}, abelian threefolds \cite{MP,BMS}, and the quintic threefold \cite{Li2}. In each of these cases, the conjectural generalized Bogomolov-Gieseker inequality proposed by Bayer, Macrí, and Toda in \cite{BMT} holds, proving that the function  
$$
Z_{\alpha,\beta,s}(E)=-\ch_3^{\beta}(E)+\left(s+\frac{1}{6}\right)\alpha^2\ch_1^{\beta}(E)+i\left(\ch_2^{\beta}(E)-\frac{\alpha^2}{2}\ch_0(E)\right),
$$
where $(\alpha,\beta)\in\mathbb{H}=\{(x,y)\in\mathbb{R}^2\colon y>0\}$ and $s>0$, is the central charge of a stability condition $\sigma_{\alpha,\beta,s}$ on a heart $\mathcal{A}^{\alpha,\beta}$, which is obtained as a double tilt of $\coh(X)$. Let us denote by $\lambda_{\alpha,\beta,s}$ the corresponding slope function. There is no general argument proving that for some triple $(\alpha,\beta,s)$ (or any stability condition), the only $\lambda_{\alpha,\beta,s}$-semistable objects of a given Chern character are precisely the Gieseker semistable sheaves. At the moment, the closest we can get to a result of this type is through the concept of asymptotic stability introduced in \cite{JM19}.

\begin{definition}\label{asym s-st}
Let $\gamma\colon [0,\infty)\rightarrow \mathbb{H}$ be an unbounded path. An object $A\in\dbx$ is \emph{asymptotically $\labs$-(semi)stable along $\gamma$} if the following two conditions hold for a given $s>0$: 
\begin{itemize}
\item[(i)] there is $t_0>0$ such that $A\in\mathcal{A}^{\gamma(t)}$ for every $t>t_0$;
\item[(ii)] for every sub-object $F\into A$ within $\mathcal{A}^{\gamma(t)}$ with $t>t_0$, there is $t_1>t_0$ such that $\lambda_{\gamma(t),s}(F) < ~(\le)~ \lambda_{\gamma(t),s}(A)$ for $t>t_1$. 
\end{itemize}
\end{definition}
This notion of stability is the first step to identify a possible Gieseker chamber in the stability manifold. Indeed, as proven in \cite{JM19} and \cite{P}, there are paths (with $\alpha$ fixed and $\beta\rightarrow -\infty$), for which asymptotic stability coincides with Gieseker stability.  However, given a Chern character $v$, a bound for the Bridgeland walls for $v$ intersecting the path $\gamma$ and depending only $v$ is still unknown.

In this article, we will concentrate in the study of asymptotic stability along the paths of fixed $\beta=\overline{\beta}$. Contrary to the case of the paths considered in \cite{JM19} and \cite{P}, these paths provide a set up closer to the set up for surfaces in the following sense:
\begin{itemize}
    \item[(i)] the heart $\coh^{\obeta}(X)$ is fixed,
    \item[(ii)] there is a 1-parameter family $\mathcal{A}^{\alpha,\obeta}$ of tilts of $\coh^{\obeta}(X)$,
    \item[(iii)] for each heart $\mathcal{A}^{\alpha,\obeta}$, there is a 1-parameter family of slopes $\lambda_{\alpha,\obeta,s}$.
\end{itemize}
This picture is particularly close to the surface case when we study  1-dimensional sheaves. Indeed, we will prove that given a Chern character $v=(0,0,D,S)$, we can choose a path $\{\beta=\obeta\}$ intersecting only a finite number of walls such that in the unbounded chamber the only semistable objects of Chern character $v$ are precisely the 1-dimensional Gieseker semistable sheaves. Moreover, for this value $\beta=\obeta$, letting the parameter $s$ vary we obtain finitely many nested walls in the $(\alpha,s)$-slice.

We should remark that unlike in the case of sheaves whose support has dimension $\geq 2$, where some kind of stability can be studied by using tilt-stability, 1-dimensional sheaves need to be studied by using $\lambda_{\alpha,\beta,s}$-stability since they are always tilt-semistable by definition.

In the positive rank case, asymptotic stability along the paths $\{\beta=\obeta\}$ is far from being Gieseker stability as the asymptotically semistable objects are often two-term complexes. However, examples of asymptotically semistable objects include shifts of reflexive sheaves and, when $X=\mathbb{P}^3$, instanton sheaves. In fact, in this last example the picture is similar to the one of 1-dimensional sheaves.

The organization of the paper is as follows. In Section \ref{Bg} we fix the notation and recall some known facts about stability conditions on threefolds. In Section \ref{labs sst 0} we state and prove our main general theorems (Theorem \ref{vert-prop-1} and  Theorem \ref{vert-prop-2}), characterizing asymptotically semistable objects along $\{\beta=\obeta\}$; in this section we also provide the first examples of asymptotically semistable objects, and completely characterize asymptotically semistable objects of positive rank and $\obeta$ irrational (Theorem \ref{vert-thm-1}), and when $v_1=0$ for $\obeta$ in a small interval depending on $v$ and $s$ (Theorem \ref{vert-thm-2} and Theorem \ref{vert-thm-3}). In Section \ref{sec:instantons}, for $X=\mathbb{P}^3$ we prove that the shift of an instanton sheaf is asymptotically $\lambda_{\alpha,\obeta,s}$-semistable for $\obeta\leq 0$ and $s>1/3$; we also completely describe the asymptotically  $\lambda_{\alpha,0,s}$-semistable objects of Chern characters $(-2,0,1,0)$ and $(-2,0,2,0)$. In Section \ref{sec:bridgeland_walls} we use the generalized Bogomolov-Gieseker inequality to prove that there are finitely many walls in the $(\alpha,s)$-slice for the twisted Chern character $\ch^{\obeta}=(-R,0,D,0)$; in the case $R=0$ we prove that we recover Gieseker semistability in the outermost chamber. In Section \ref{sec:beta=0} we specialize to the case $X=\mathbb{P}^3$ and $\obeta=0$; here we prove that the innermost chamber is contained in a quiver region, that is bounded by a wall destabilizing all the shifts of instanton sheaves, and that there are no $\lambda_{\alpha,0,s}$-semistable objects of Chern character $(-R,0,D,0)$ in the interior of the innermost chamber; we also compute all the walls and destabilizing objects in the $(\alpha,s)$-slice for the Chern characters $(-2,0,1,0)$, $(-2,0,2,0)$, and $(0,0,3,0)$ to demonstrate our algorithm. 

\subsection*{Acknowledgments}
MJ is supported by the CNPQ grant number 302889/2018-3 and the FAPESP Thematic Project 2018/21391-1. CM is supported by the FAPESP grant number 2020/06938-4, which is part of the FAPESP Thematic Project 2018/21391-1. 

%%%%%%%%%%%%%%%%%%%%%%%%%%%%%%%%%%%%%%%%%%%%%%%%%%%%%%%%%%%%%%%%%%%%%%%%%%%%%
%%%%%%%%%%%%%%%%%%%%%%%%%%%%%%%%%%%%%%%%%%%%%%%%%%%%%%%%%%%%%%%%%%%%%%%%%%%%%

\section{Background material}\label{Bg}

\subsection{Notation} In what follows, $X$ will denote a smooth projective threefold of Picard rank 1 with ample generator $H$ for which stability conditions are known to exist, i.e., $X$ is either Fano \cite{M-p3,S-q3,Li}, or abelian \cite{MP,BMS}, or a quintic threefold \cite{Li2}. Other than specified we will use the following notation:
\begin{itemize}
\item For $\beta\in \mathbb{R}$ and $A\in D^b(X)$, the twisted Chern character of $A$ is
\begin{align*}
\ch^{\beta}(A):=& e^{-\beta H}\cdot\ch(A)\\
=&\ch^{\beta}_0(A)+\ch^{\beta}_1(A)H+\ch^{\beta}_2(A)H^2+\ch^{\beta}_3(A)H^3\\
=:&(\ch^{\beta}_0(A),\ch^{\beta}_1(A),\ch^{\beta}_2(A),\ch^{\beta}_3(A)).
\end{align*}
\item $\delta_{ij}^{\beta}(F,A) := \ch_i^{\beta}(F)\ch_j^{\beta}(A)-\ch_j^{\beta}(F)\ch_i^{\beta}(A)$.
\item $\rho_{A}(\alpha,\beta) = \ch_2^{\beta}(A) -\alpha^2\ch_0^{\beta}(A)/2$.
\item $\ch^{\alpha,\beta}(A):=\operatorname{Re}\bigl(\exp(-\beta H-i \alpha H)\cdot \ch(A)\bigr).$
\item Let $\mathcal{A}$ be the heart of a t-structure on $D^b(X)$. For an object $A\in D^b(X)$, we denote by $\mathcal{H}^{i}_{\mathcal{A}}(A)$
the cohomology objects of $A$ with respect to the t-structure generated by $\mathcal{A}$. We suppress $\mathcal{A}$ from the notation in the case of the standard t-structure. 

\item We will write  $A_i:=\calh^{-i}_{\mathcal{A}}(A)$ and $A_{ij}:=\calh^{-j}(A_i)$. In the case when $A_i=0$ for $i\neq 0,1$ and $A_{ij}=0$ for $j\neq 0,1$, we have the three distinguished triangles:
\begin{equation} \label{3ts}
\begin{gathered}
A_{11}[1]\to A_1\to A_{10} ; \\
A_{01}[1]\to A_0\to A_{00} ; \\
A_1[1]\to A\to A_0.
\end{gathered}
\end{equation}
\end{itemize}

\subsection{Stability conditions on threefolds}
We briefly recall the construction of Bridgeland stability conditions on $X$ due to Bayer, Macr\'i, and Toda \cite{BMT}.

For $\beta\in\mathbb{R}$ define the (twisted) Mumford slope by
$$
\mu_{\beta}(E)=\begin{cases}\frac{\ch^{\beta}_1(E)}{\ch^{\beta}_0(E)}& \text{if}\ \ch^{\beta}_0(E)\neq 0,\\ +\infty &\text{if}\ \ch^{\beta}_0(E)=0.\end{cases}
$$ 
The subcategories 
\begin{align*}
\tors{\beta}&:=\{E\in \coh(X)\ |\ \mu_{\beta}(Q)>0\ \text{for every quotient}\ E\onto Q\},\ \text{and}\\
\free{\beta}&:=\{E\in \coh(X)\ |\ \mu_{\beta}(F)\leq 0\ \text{for every subsheaf} \ F\into E\},
\end{align*}
form a torsion pair on $\coh(X)$. The corresponding tilted category is the extension closure
$$
\coh^{\beta}(X):=\langle \free{\beta}[1],\ \tors{\beta}\rangle.
$$
For every $\alpha>0$ we can define the following ``slope'' function on $\coh^{\beta}(X)$
$$
\nu_{\alpha, \beta}(E)=\begin{cases}\dfrac{\ch^{\beta}_2(E)-\frac{\alpha^2}{2}\ch^{\beta}_0(E)}{\alpha\ch^{\beta}_1(E)}& \text{if}\ \ch^{\beta}_1(E)\neq 0,\\ +\infty &\text{if}\ \ch^{\beta}_1(E)=0.\end{cases}
$$ 
We refer to $\nu_{\alpha,\beta}$ as the tilt, and to objects in $\coh^{\beta}(X)$ that are semistable with respect to $\nu_{\alpha,\beta}$ as tilt--semistable objects. Analogously to the case of Mumford slope, the subcategories
\begin{align*}
\mathcal{T}_{\alpha,\beta}&:=\{E\in \coh^{\beta}(X)\ |\ \nu_{\alpha,\beta}(Q)>0\ \text{for every quotient}\ E\onto Q\ \text{in}\ \coh^{\beta}(X)\},\ \text{and}\\
\mathcal{F}_{\alpha,\beta}&:=\{E\in \coh^{\beta}(X)\ |\ \nu_{\alpha,\beta}(F)\leq 0\ \text{for every subobject} \ F\into E\ \text{in}\ \coh^{\beta}(X)\},
\end{align*}
form a torsion pair on $\coh^{\beta}(X)$. The corresponding tilted category is the extension closure
$$
\mathcal{A}^{\alpha,\beta}:=\langle {\mathcal{F}}_{\alpha,\beta}[1],\ \tors{\alpha,\beta}\rangle.
$$
As proven in \cite{BMT,M-p3,BMS,Li}, we know that $\mathcal{A}^{\alpha,\beta}$ supports a 1-parameter family of stability conditions whose central charges are given by
$$
Z_{\alpha,\beta,s}(A)=-\left(\ch^{\beta}_3(A)-\left(s+\frac{1}{6}\right)\alpha^2\ch^{\beta}_1(A)\right)+i\left(\ch^{\beta}_2(A)-\frac{\alpha^2}{2}\ch^{\beta}_0(A)\right),\ s>0.
$$ 
The associated Bridgeland slope is
$$
\lambda_{\alpha,\beta,s}(A):=\frac{\ch^{\beta}_3(A)-\left(s+\frac{1}{6}\right)\alpha^2\ch^{\beta}_1(A)}{\ch^{\beta}_2(A)-\frac{\alpha^2}{2}\ch^{\beta}_0(A)}.
$$
The stability conditions $\sigma_{\alpha,\beta,s}=(Z_{\alpha,\beta,s},\mathcal{A}^{\alpha,\beta})$ are geometric (i.e., skyscraper sheaves $\mathcal{O}_x$ are stable of the same phase), and they satisfy the support property with respect to the quadratic form
$$
Q_{\alpha,\beta, K}(\ch):=(\ch^{\beta})^{t}B_{\alpha,K}\ch^{\beta},\ \text{where}\ B_{\alpha,K}=\begin{pmatrix}0 & 0 & -K\alpha^2& 0\\ 0 & K\alpha^2 &0 &-3\\ -K\alpha^2 & 0 & 4 &0 \\ 0 & -3 & 0 & 0\end{pmatrix},
$$
for every $K\in[1, 6s+1)$ (see \cite{BMT,BMS}). For an object $A\in\mathcal{A}^{\alpha,\beta}$ we write $Q_{\alpha,\beta, K}(A)$ for $Q_{\alpha,\beta, K}(\ch(A))$, and denote the corresponding pairing 
$$
(\ch^{\beta}(F))^{t}B_{\alpha,K}\ch^{\beta}(A)
$$ 
by $Q_{\alpha,\beta, K}(F,A)$.

The following lemma is a consequence of the support property and will play an important role in the section that follows.
\begin{lemma}[{\cite[Lemma 2.7]{S}}]\label{qform}
\begin{enumerate}
\item Let $A,B\in\mathcal{A}^{\alpha,\beta}$ be $\sigma_{\alpha,\beta,s}$--semistable objects. If $\lambda_{\alpha,\beta,s}(A)=\lambda_{\alpha,\beta,s}(B)$, then $Q_{\alpha,\beta,K}(A,B)\geq 0$.
\item Assume that there is a wall defined by an exact sequence 
$$
0\rightarrow A \rightarrow E \rightarrow B \rightarrow 0
$$ 
in $\mathcal{A}^{\alpha,\beta}$. Then $0\leq Q_{\alpha,\beta,K}(A)+Q_{\alpha,\beta,K}(B)\leq Q_{\alpha,\beta,K}(E)$. 
\end{enumerate}
\end{lemma}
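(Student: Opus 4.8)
The plan is to reduce both statements to the single input we are given, namely the support property of $\sigma_{\alpha,\beta,s}$: for every $\sigma_{\alpha,\beta,s}$-semistable object $A$ one has $Q_{\alpha,\beta,K}(A)\ge 0$, and the form $Q_{\alpha,\beta,K}$ is negative definite on the real kernel of $Z_{\alpha,\beta,s}$ (viewed as an $\mathbb{R}$-linear map $\mathbb{R}^4\to\mathbb{C}$). Since both $Q_{\alpha,\beta,K}$ and $Z_{\alpha,\beta,s}$ factor through the twisted Chern character $\ch^\beta(\cdot)\in\mathbb{R}^4$, I would work with the vectors $\ch^\beta(A),\ch^\beta(B)$, treating $Q_{\alpha,\beta,K}$ as a symmetric bilinear form (so that $Q_{\alpha,\beta,K}(A,A)=Q_{\alpha,\beta,K}(A)$ and $Q_{\alpha,\beta,K}$ is additive along $\ch^\beta(E)=\ch^\beta(A)+\ch^\beta(B)$ up to the cross term).

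For part (1), I would first translate the slope equality into a statement about central charges. Writing $\operatorname{Im}Z_{\alpha,\beta,s}(A)=\ch_2^\beta(A)-\tfrac{\alpha^2}{2}\ch_0^\beta(A)$ and recalling $\lambda_{\alpha,\beta,s}=-\operatorname{Re}Z_{\alpha,\beta,s}/\operatorname{Im}Z_{\alpha,\beta,s}$, together with the fact that the central charge of any nonzero object of $\mathcal{A}^{\alpha,\beta}$ lies in the open upper half-plane or on the strictly negative real axis, the equality $\lambda_{\alpha,\beta,s}(A)=\lambda_{\alpha,\beta,s}(B)$ forces $Z_{\alpha,\beta,s}(A)$ and $Z_{\alpha,\beta,s}(B)$ to be parallel with positive ratio, i.e.\ $Z_{\alpha,\beta,s}(A)=\rho\,Z_{\alpha,\beta,s}(B)$ for some $\rho>0$. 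I would then set $u:=\ch^\beta(A)-\rho\,\ch^\beta(B)\in\ker Z_{\alpha,\beta,s}$ and invoke negative definiteness to get $Q_{\alpha,\beta,K}(u)\le 0$. Expanding by bilinearity gives
\[
0\ge Q_{\alpha,\beta,K}(u)=Q_{\alpha,\beta,K}(A)-2\rho\,Q_{\alpha,\beta,K}(A,B)+\rho^2 Q_{\alpha,\beta,K}(B),
\]
so $2\rho\,Q_{\alpha,\beta,K}(A,B)\ge Q_{\alpha,\beta,K}(A)+\rho^2 Q_{\alpha,\beta,K}(B)\ge 0$ by the support property applied to $A$ and $B$; dividing by $2\rho>0$ yields $Q_{\alpha,\beta,K}(A,B)\ge 0$.

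For part (2), the point is that along a wall the objects $A$, $E$, $B$ are $\sigma_{\alpha,\beta,s}$-semistable of one common phase, so in particular $\lambda_{\alpha,\beta,s}(A)=\lambda_{\alpha,\beta,s}(B)$ and part (1) applies to give $Q_{\alpha,\beta,K}(A,B)\ge 0$. The lower bound $Q_{\alpha,\beta,K}(A)+Q_{\alpha,\beta,K}(B)\ge 0$ is then immediate from the support property for the semistable objects $A$ and $B$. For the upper bound I would use additivity of $\ch^\beta$ in the exact sequence and bilinearity:
\[
Q_{\alpha,\beta,K}(E)=Q_{\alpha,\beta,K}(A)+Q_{\alpha,\beta,K}(B)+2\,Q_{\alpha,\beta,K}(A,B)\ge Q_{\alpha,\beta,K}(A)+Q_{\alpha,\beta,K}(B),
\]
the last inequality being precisely $Q_{\alpha,\beta,K}(A,B)\ge 0$.

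The hard part is not computational but lies in pinning down the degenerate cases of the reduction in part (1): one must confirm that a nonzero semistable object has nonzero central charge and that equal slope genuinely produces a \emph{positive} ratio $\rho$, including the case $\operatorname{Im}Z_{\alpha,\beta,s}=0$ (where $\lambda_{\alpha,\beta,s}=+\infty$ and both central charges sit on the negative real axis). One should also make sure that the objects labelled $A$ and $B$ in the wall of part (2) may be taken $\sigma_{\alpha,\beta,s}$-semistable of the same phase as $E$ — for instance by replacing them with their Jordan--Hölder factors — so that the support property is legitimately available; once these points are settled, the conclusion is the short bilinear computation above.
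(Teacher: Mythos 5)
Your proof is correct and takes essentially the approach the paper relies on: the paper gives no proof of its own, citing \cite[Lemma 2.7]{S} and remarking only that the lemma is ``a consequence of the support property,'' and your argument --- equal slopes force parallel central charges with positive ratio, negative definiteness of $Q_{\alpha,\beta,K}$ on $\ker Z_{\alpha,\beta,s}$ applied to $\ch^{\beta}(A)-\rho\,\ch^{\beta}(B)$, plus non-negativity on semistable objects, combined by bilinearity --- is precisely the standard proof behind that citation. The caveats you flag (nonvanishing of the central charges, the $\operatorname{Im}Z_{\alpha,\beta,s}=0$ ray, and that $A$, $B$ may be taken semistable of the same phase as $E$ along the wall) are exactly what the definition of a wall in the cited source provides, so nothing essential is missing.
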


%%%%%%%%%%%%%%%%%%%%%%%%%%%%%%%%%%%%%%%%%%%%%

%%%%%%%%%%%%%%%%%%%%%%%%%%%%%%%%

%%%%%%%%%%%%%%%%%%%%%%%%%%%%%%%%

%%%%%%%%%%%%%%%

%%%%%%%%%%%%%%%%%%%%%%%%%%%%%%%%%%%%%%%%%%%%

\section{Asymptotic $\labs$-stability along vertical lines} \label{labs sst 0}

In this section, we will focus on the characterization of asymptotically $\labs$-semistable objects along vertical lines. In contrast with the case of horizontal lines studied in \cite{JM19}, the set of asymptotically $\labs$-semistable objects along a vertical line $\{\beta=\obeta\}$ with fixed Chern character $v$ does depend both on $\obeta$ and on the parameter $s$.

More precisely, we establish the following two results. The first one provides sufficient conditions for asymptotic $\labs$-stability along vertical lines.

\begin{theorem} \label{vert-prop-1}
If an object $A\in\dbx$ with $\ch_0(A)\ne0$ satisfies the following conditions
\begin{enumerate}
\item $\calh^p(A)=0$ for $p\ne-1,0$;
\item $\dim\calh^0(A)\le1$, and every quotient sheaf $\calh^0(A)\onto P$ (including $\calh^0(A)$ itself) satisfies
$$ \dfrac{\ch_3(P)}{\ch_2(P)} > \dfrac{6s+1}{3}\left( \mu(A)-\overline{\beta} \right) + \overline{\beta} $$
whenever $\ch_2(P)\ne0$;
\item $\tilde{A}=\calh^{-1}(A)$ is $\mu$-stable;
\item $\tilde{A}$ is either reflexive or $\tilde{A}^{**}/\tilde{A}$ has pure dimension 1, and every subsheaf $R\into\tilde{A}^{**}/\tilde{A}$ (including $\tilde{A}^{**}/\tilde{A}$ itself) satisfies
$$ \dfrac{\ch_3(R)}{\ch_2(R)} < \dfrac{6s+1}{3}\left( \mu(A)-\obeta \right) + \obeta ; $$ 
\item if $U$ is a sheaf of dimension at most 1 and $u:U\to \calh^0(A)$ is a non-zero morphism that lifts to a monomorphism $\tilde{u}:U\into A$ within $\mathcal{A}^{\alpha,\obeta}$ for every $\alpha\gg0$, then
$$ \dfrac{\ch_3(U)}{\ch_2(U)} < \dfrac{6s+1}{3}\left( \mu(A)-\obeta \right) + \obeta; $$
\end{enumerate}
then $A$ is asymptotically $\labs$-stable along the vertical line $\{\beta=\obeta\}$ for every $s\geq1/3$.
\end{theorem}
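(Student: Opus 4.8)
The plan is to check the two clauses of Definition~\ref{asym s-st} directly along the path $\beta=\obeta$, $\alpha\to\infty$. The computation that organizes everything is the limit of the Bridgeland slope: for an object $E$ with $\ch_0(E)\ne0$ one has
\[
\lim_{\alpha\to\infty}\lambda_{\alpha,\obeta,s}(E)=\frac{6s+1}{3}\bigl(\mu(E)-\obeta\bigr),
\]
whereas for a sheaf $P$ of dimension at most $1$ the slope is the $\alpha$-independent value $\lambda_{\alpha,\obeta,s}(P)=\ch_3(P)/\ch_2(P)-\obeta$. Hence $\lambda_{\alpha,\obeta,s}(A)\to L:=\tfrac{6s+1}{3}(\mu(A)-\obeta)$, the threshold appearing in (2), (4), (5) is exactly $L+\obeta$, and each of those hypotheses says precisely that the relevant dimension~$\le1$ sheaf has limiting slope above $L$ (for the quotients $P$) or below $L$ (for the sub-objects $R$, $U$). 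This already tells me the proof must separate a leading-order comparison, governed by the Mumford slope, from a sub-leading one, governed by the $\ch_3/\ch_2$ data, and that conditions (2), (4), (5) are there to win the latter.

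For clause~(i) I would show $A\in\mathcal{A}^{\alpha,\obeta}$ for $\alpha\gg0$ from the truncation triangle $\tilde A[1]\to A\to\calh^0(A)$ of hypothesis~(1). Since a heart is closed under extensions it suffices to put both outer terms in the heart. The sheaf $\calh^0(A)$ has dimension $\le1$, so $\ch_1^{\obeta}=0$, its tilt slope is $+\infty$, and it lies in $\mathcal{T}_{\alpha,\obeta}\subset\mathcal{A}^{\alpha,\obeta}$. As $\tilde A$ is $\mu$-stable (hypothesis~(3)) it lies wholly in $\tors{\obeta}$ or in $\free{\obeta}$ according to the sign of $\mu(\tilde A)-\obeta$, and a short computation of $\nu_{\alpha,\obeta}(\tilde A[1])$ shows that for $\alpha\gg0$ it falls into $\free{\alpha,\obeta}[1]$ or into $\mathcal{T}_{\alpha,\obeta}$; either way $\tilde A[1]\in\mathcal{A}^{\alpha,\obeta}$. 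Only hypotheses (1) and (3) are needed here.

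For clause~(ii) fix a sub-object $F\hookrightarrow A$ with quotient $G$, viewed as fixed objects of $\dbx$. Because $\operatorname{Im}Z_{\alpha,\obeta,s}\ge0$ on the heart and $F,G$ are fixed, comparing imaginary parts as $\alpha\to\infty$ forces $\ch_0(F),\ch_0(G)\le0$; feeding this into the long exact cohomology sequence of $0\to F\to A\to G\to0$ together with $\calh^p(A)=0$ for $p\ne-1,0$ controls the cohomology of $F$ and $G$ and shows $\calh^{-1}(F)$ maps to $\tilde A$, so that $\mu$-stability of $\tilde A$ yields $\mu_{\obeta}(F)\le\mu_{\obeta}(A)$. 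If this is strict the leading terms give $\lim\lambda_{\alpha,\obeta,s}(F)<L=\lim\lambda_{\alpha,\obeta,s}(A)$ and we are done; a rank-zero $F$ with $\ch_1^{\obeta}(F)>0$ has $\lambda_{\alpha,\obeta,s}(F)\to-\infty$, while a dimension~$\le1$ sub-object is directly covered by (4) or (5) according to whether its composite to $\calh^0(A)$ is zero. The essential case is equality of Mumford slopes. Here a rank count (via the triangles~\eqref{3ts}) pins $\calh^0(F)$, $\calh^{-1}(G)$ and $\calh^0(G)$ to dimension $\le1$, so $G$ has $\ch_0^{\obeta}=\ch_1^{\obeta}=0$ and $\lambda_{\alpha,\obeta,s}(G)=\ch_3^{\obeta}(G)/\ch_2^{\obeta}(G)$ is constant in $\alpha$; by the seesaw it then suffices to prove $\lambda_{\alpha,\obeta,s}(G)>L$. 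Since $\ch_3^{\obeta}-L\,\ch_2^{\obeta}$ is additive, I would evaluate it on $\calh^{-1}(G)[1]\to G\to\calh^0(G)$: the contribution of $\calh^0(G)$, a quotient of $\calh^0(A)$, is positive by (2), and the contribution of $\calh^{-1}(G)$ is negative by (4)/(5), after identifying it either as a subsheaf of $\tilde A^{**}/\tilde A$ via the triangle $\tilde A^{**}/\tilde A\to\tilde A[1]\to\tilde A^{**}[1]$, or as a low-dimensional sheaf meeting $\calh^0(A)$; the purity in (4) guarantees $\ch_2\ne0$, so all these slopes are finite. Summing gives $\ch_3^{\obeta}(G)>L\,\ch_2^{\obeta}(G)$, hence $\lambda_{\alpha,\obeta,s}(G)>L$.

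The hard part is this last, equality-of-slopes case. Two things have to be made rigorous: first the cohomological bookkeeping that bounds the ranks of the pieces of $F$ and $G$, excludes a nonzero $\calh^{-2}(G)$ (so that $\calh^{-1}(F)\hookrightarrow\tilde A$), and reduces a full-rank proper subsheaf $\calh^{-1}(F)\subsetneq\tilde A$ to the saturated case by passing to $\tilde A^{**}$ --- this is exactly where the reflexive hull and the purity hypothesis in (4) are indispensable; and second the precise identification of $\calh^{-1}(G)$ with one of the sheaves named in (4)/(5). I expect both to be controlled for $\alpha\gg0$ by the support property, Lemma~\ref{qform}, which bounds the discrete invariants of any would-be destabilizer. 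Finally, I expect the hypothesis $s\ge1/3$ to enter precisely at this step: it is what admits a value of $K$ close to $3$ in $Q_{\alpha,\obeta,K}$ (the range being $K\in[1,6s+1)$), and hence what forces the sub-leading sign in the borderline case.
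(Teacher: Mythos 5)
There is a genuine gap, and it is exactly at the point you flagged as the place where $s\ge 1/3$ should enter. Your argument only ever treats sub-objects $F\into A$ that are monomorphisms in $\mathcal{A}^{\alpha,\obeta}$ for \emph{all} $\alpha\gg0$ (``viewed as fixed objects of $\dbx$''), and for those your leading/sub-leading limit analysis is essentially the paper's case analysis. But the paper's proof has an additional, essential component with no counterpart in your proposal: Lemma \ref{vert_finite}, which shows that for $s\ge1/3$ only finitely many \emph{actual} $\lambda$-walls for $A$ cross the vertical line above $\alpha_0$. This is what guarantees that if $A$ were not asymptotically stable there would exist a single destabilizer ``to infinity'' to which the limit computations can be applied, and it is the only place in the whole proof where $s\ge1/3$ is used: it rules out isolated ``bubble'' walls and unbounded walls with two branches asymptotic to $\{\beta=\obeta\}$, via the homogenization/asymptote analysis quoted from \cite[Equation 28, Lemma 4.8]{JM19}. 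Your alternative explanation --- that $s\ge1/3$ enters through the support property $Q_{\alpha,\obeta,K}$ with $K$ close to $3$ --- is not what happens: Lemma \ref{qform} is never invoked in the paper's proof of this theorem, and none of the slope-limit computations uses $s\ge1/3$. As written, your argument would ``prove'' the statement for every $s>0$, which the authors explicitly cannot do (the restriction exists precisely because of possibly infinitely many bubble walls at large $\alpha$); that is a sign the quantifier over sub-objects has been silently narrowed.

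The second problem is in your borderline case $\mu(F)=\mu(A)$. You propose to conclude $\lambda(G)>L$ by making the contribution of $\calh^{-1}(G)$ negative, ``identifying it as a subsheaf of $\tilde{A}^{**}/\tilde{A}$''. That identification is false in general: when $\ch_0(F)=\ch_0(A)$, the torsion sheaf $\tilde{A}/\tilde{F}$ sitting inside $\calh^{-1}(G)$ embeds in $\tilde{F}^{**}/\tilde{F}$, not in $\tilde{A}^{**}/\tilde{A}$ (take $\tilde{F}=\ker(\tilde{A}\onto\mathcal{O}_C)$: the quotient is $\mathcal{O}_C$, invisible to hypothesis (4)). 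The paper's route is different and is the one that works: Lemma \ref{vert-lem-3} shows $\calh^{-1}(G)$ is \emph{torsion free} when $\obeta<\mu(A)$, hence zero in the full-rank case, so $G\simeq G_{00}$ is a quotient of $\calh^0(A)$ and only condition (2) is needed; when $\obeta\ge\mu(A)$ a separate heart-membership computation (the signs of $\rho$ and $\tau$ of $\calh^{-1}(G)[1]$, via \cite[Proposition 2.3]{JM19}) again forces $\calh^{-1}(G)=0$. The subsheaf-of-$\tilde{A}^{**}/\tilde{A}$ mechanism belongs on the other side of the sequence: it is how the paper handles rank-zero \emph{sub}-objects $F=F_{00}$ with $u=0$, where $F_{00}\into\tilde{A}^{**}/\tilde{A}$ and hypothesis (4) applies. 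Relatedly, your clause-(i) sketch elides that the two regimes $\obeta<\mu(A)$ and $\obeta\ge\mu(A)$ require genuinely different arguments (Lemma \ref{vert-lem-2} versus Lemmas \ref{vert-lem-4}--\ref{vert-lem-6}, the latter resting on $\mu$-stability to control quotients of $\tilde{A}[1]$), and the support property does not repair any of this.
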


Note that conditions (2) and (3) above imply that $\ch(A)$ satisfies the Bogomolov inequality. The restriction on $s$ is due to the possible presence of isolated "bubble" walls which can appear as $s$ decreases. Potentially, there could be infinitely many of these for a fixed value of $s$ and that would prevent us being able to deduce $\lambda_{\alpha,\beta,s}$-asymptotic stability. There is no evidence that this does occur and we would conjecture that there are only ever finitely many walls (above a fixed value of $\alpha$) for any $s>0$. 

A partial converse to the previous statement, describing necessary conditions for asymptotic $\labs$-semistability, is the following.

\begin{theorem} \label{vert-prop-2}
Let $v$ be a numerical Chern character satisfying $v_0\ne0$ and the Bogomolov inequality $v_1^2-2v_0v_2\geq 0$. If $A\in\dbx$ is an asymptotically $\labs$-semistable object along the vertical line $\{\beta=\obeta\}$ with $\ch(A)=v$, then:
\begin{enumerate}
\item $\calh^{-2}(A)=0$; % $$\calh^{-1}(\calh^{0}_{\obeta}(A))=0$;
\item $\dim\calh^{0}(A)\le1$, and every sheaf quotient $\calh^{0}(A)\onto P$ (including $\calh^{0}(A)$ itself) satisfies
$$ \dfrac{\ch_3(P)}{\ch_2(P)} \ge \dfrac{6s+1}{3}\left( \mu(A)-\overline{\beta} \right) + \overline{\beta} $$
whenever $\ch_2(P)\ne0$;
\item $\calh^{-1}(A)=\tilde{A}$ is $\mu$-semistable, and every sub-object $F\into A$ with $\mu(F)=\mu(A)$ satisfies
$$\left(s+\dfrac{1}{6}\right)(\mu(A)-\obeta)\delta_{20}^{\obeta}(F,A)-\dfrac{1}{2}\delta_{30}^{\obeta}(F,A) \le 0 ;$$
\item $\tilde{A}$ is either reflexive or $\tilde{A}^{**}/\tilde{A}$ has pure dimension 1, and every subsheaf $R\into\tilde{A}^{**}/\tilde{A}$ (including $\tilde{A}^{**}/\tilde{A}$ itself) satisfies
$$ \dfrac{\ch_3(R)}{\ch_2(R)} \le \dfrac{6s+1}{3}\left( \mu(A)-\obeta \right) + \obeta ; $$ 
\item if $U$ is a sheaf of dimension at most 1 and $u:U\to A_{00}$ is a non-zero morphism that lifts to a monomorphism $\tilde{u}:U\into A$ within $\mathcal{A}^{\alpha,\obeta}$ for every $\alpha\gg0$, then
$$ \dfrac{\ch_3(U)}{\ch_2(U)} \le \dfrac{6s+1}{3}\left( \mu(A)-\obeta \right) + \obeta . $$
\end{enumerate}
\end{theorem}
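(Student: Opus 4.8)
The plan is to turn Definition \ref{asym s-st} into numerical inequalities on $\ch(A)$ by analysing the slope $\labs$ as $\alpha\to\infty$ along $\{\beta=\obeta\}$, and, whenever one of the conclusions (1)--(5) fails, to exhibit an explicit sub- or quotient object of $A$ that violates semistability for all large $\alpha$. The engine is the asymptotic expansion of the slope. Writing $Z_{\alpha,\obeta,s}(A)=-n(A)+i\,m(A)$ with $m(A)=\ch_2^{\obeta}(A)-\tfrac{\alpha^2}2\ch_0(A)$, one finds for $\ch_0(A)\ne0$ that
$$ \labs(A)=\frac{n(A)}{m(A)}\;\xrightarrow{\ \alpha\to\infty\ }\;\frac{6s+1}{3}\bigl(\mu(A)-\obeta\bigr)=:L, $$
so the limiting slope depends only on $\mu(A)$; moreover the inequality $m(A)\ge0$, valid on $\mathcal{A}^{\alpha,\obeta}$, forces $\ch_0(A)<0$ for large $\alpha$. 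For a pair $F\into A$ the sign of $\labs(F)-\labs(A)$ is governed by $n(F)m(A)-n(A)m(F)$, which expands as a polynomial in $\alpha$ with coefficients built from the pairings $\delta^{\obeta}_{ij}(F,A)$: the $\alpha^4$-coefficient is $\tfrac12(s+\tfrac16)\delta^{\obeta}_{10}(F,A)$, and when $\mu(F)=\mu(A)$ this term vanishes and the leading $\alpha^2$-coefficient equals precisely $(s+\tfrac16)(\mu(A)-\obeta)\delta^{\obeta}_{20}(F,A)-\tfrac12\delta^{\obeta}_{30}(F,A)$. Finally, every sheaf $P$ of dimension $\le1$ has the $\alpha$-independent slope $\labs(P)=\ch_3(P)/\ch_2(P)-\obeta$. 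These computations already dictate the shape of every inequality in the statement.

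Next I would read off the structural conclusions from the hypothesis that $A\in\mathcal{A}^{\alpha,\obeta}$ for all large $\alpha$. Since $\nu_{\alpha,\obeta}(T[1])\to+\infty$ for any torsion-free $T\in\free{\obeta}$, the sheaf $\calh^{-2}(A)$ --- which would occur as exactly such a subobject of $\calh^{-1}_{\coh^{\obeta}}(A)$ --- must vanish, giving (1) and the two-term triangle $\tilde A[1]\to A\to\calh^0(A)$ with $\tilde A=\calh^{-1}(A)$ torsion-free. The same membership forces $\calh^0(A)\in\mathcal{T}_{\alpha,\obeta}$ for all large $\alpha$: a quotient of positive rank would break $m\ge0$, while a $2$-dimensional quotient $Q$ would have $\labs(Q)\to-\infty$, contradicting $\labs(Q)\ge\labs(A)$; hence no quotient of dimension $\ge2$ survives and $\dim\calh^0(A)\le1$, the first half of (2). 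The same slope-to-infinity device, applied to a hypothetical $0$-dimensional subsheaf of $\tilde A^{**}/\tilde A$ (whose associated subobject of $A$ has slope $+\infty$), rules out a $0$-dimensional part and yields the reflexive-or-pure-dimension-$1$ dichotomy of (4).

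With this structure in hand, each inequality follows by testing a natural sub- or quotient object and letting $\alpha\to\infty$ in the semistability inequality. A sheaf quotient $\calh^0(A)\onto P$ is a quotient of $A$, so $\labs(P)\ge\labs(A)$; as $\labs(P)$ is constant and $\labs(A)\to L$ this is (2). A saturated $G\subset\tilde A$ with $\mu(G)>\mu(A)$ produces a subobject $G[1]\into A$ of limiting slope $\tfrac{6s+1}{3}(\mu(G)-\obeta)>L$, contradicting semistability; hence $\tilde A$ is $\mu$-semistable, and for $F\into A$ with $\mu(F)=\mu(A)$ the required inequality of (3) is exactly the sign of the $\alpha^2$-coefficient computed above. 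A $1$-dimensional subsheaf $R\subset\tilde A^{**}/\tilde A$ lifts, via its preimage in $\tilde A^{**}$, to a subobject $R\into A$, and the sheaves $U$ of (5) are subobjects by assumption; in both cases the slope is the constant $\ch_3/\ch_2-\obeta$, and passing to the limit with $\labs\le\labs(A)\to L$ gives (4) and (5).

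I expect the main obstacle to be categorical rather than numerical: one must verify that each candidate destabiliser is genuinely a sub- or quotient object inside the double tilt $\mathcal{A}^{\alpha,\obeta}$ \emph{simultaneously for all large $\alpha$}, so that passing to the limit in Definition \ref{asym s-st}(ii) is legitimate. This means tracking, across both tilts, when $G[1]$, $R$, $U$, and $\calh^0(A)\onto P$ remain exact in $\mathcal{A}^{\alpha,\obeta}$, and is precisely why (5) must \emph{assume} that $u$ lifts to a monomorphism for every $\alpha\gg0$ rather than deducing it. Controlling this lifting, together with the cohomology-vanishing arguments of the structural step, is where the real work lies; the slope asymptotics themselves are routine.
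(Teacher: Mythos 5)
Your proposal is correct and follows essentially the same route as the paper's proof: the same slope asymptotics (the limit $\tfrac{6s+1}{3}\bigl(\mu(A)-\obeta\bigr)$ for $A$, the constant slope $\ch_3/\ch_2-\obeta$ for $1$-dimensional sheaves, and the $\alpha^2$-coefficient $\bigl(s+\tfrac16\bigr)(\mu(A)-\obeta)\delta_{20}^{\obeta}(F,A)-\tfrac12\delta_{30}^{\obeta}(F,A)$ when $\mu(F)=\mu(A)$), tested against the same objects (quotients of $\calh^{0}(A)$, the shift of the maximal $\mu$-destabilizing subsheaf of $\tilde{A}$, subsheaves of $\tilde{A}^{**}/\tilde{A}$ regarded inside $\tilde{A}[1]$, and the lifted sheaves $U$). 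The categorical verifications you flag as the remaining work are exactly what the paper supplies via Lemmas \ref{vert-lem-1}, \ref{vert-lem-2}, \ref{vert-lem-4} and \ref{vert-lem-5}, which establish that these candidates lie in, or lift to, $\mathcal{A}^{\alpha,\obeta}$ for all $\alpha\gg0$.
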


Before we step into the proofs, let us present two examples of objects satisfying the conditions of Theorem \ref{vert-prop-1}.

\begin{example}\label{refl mu-st}
Let $S$ be a $\mu$-stable reflexive sheaf on $X$. One easily checks that $A:=S[1]$ satisfies the five conditions of Theorem \ref{vert-prop-1}, so $A$ is asymptotically $\labs$-stable along any vertical line, for every $s\ge1/3$.

Furthermore, assume that $S$ is not locally free, and let $Z$ be a 0-dimensional sheaf whose support lies in its singular set; it follows that
$$ \Ext^1(Z,S[1]) \simeq \Ext^2(Z,S) \simeq \Ext^1(S,Z)\ne0 , $$
hence there is a non-trivial extension $A\in\dbx$ given by the triangle 
$$ S[1] \to A \to Z. $$
We now argue that such $A$ is asymptotically $\labs$-stable along any vertical line $\{\beta=\obeta\}$ as long as no subsheaf of $Z$ factors through $A$.

Indeed, conditions (1) through (4) of Theorem \ref{vert-prop-1} are immediately seen to hold for any $\obeta$. To verify the fifth condition, assume that the sheaf morphism $u:U\to\mathcal{O}_{p}$ lifts to a monomorphism $u:U\into A$ in $\cohab$; set $G:=A/U$. Letting $K:=\ker u$, note that the short exact sequence $0\to S \to \tilde{G}\to K\to0$ in $\coh(X)$ forces $K=0$, since $S$ is reflexive and $\tilde{G}=\calh^{-1}(G)$ is torsion free (cf. Lemma \ref{vert-lem-3}). It follows that $u$ must be an injection, which would contradict our assumption. 

An example of such $Z$ is $Z=\mathcal{O}_p$, where $p$ is a singular point of $S$. In fact, by the same argument above, a lifting to $A$ of a sheaf morphism $U\rightarrow \mathcal{O}_p$ will necessarily split $A$ as a direct sum $S[1]\oplus\mathcal{O}_{p}$.\hfill\qed
\end{example}

\begin{example}\label{stability ic}
If $C$ be a curve in $X$ (that is, be a subscheme of pure dimension 1), then, by Theorem \ref{vert-prop-1}, $A:=\IC[1]$ is asymptotically $\labs$-stable along $\{\beta=\obeta\}$ provided every subsheaf $R\into\OC$ satisfies
\begin{equation}\label{cond-c}
\dfrac{\ch_3(R)}{\ch_2(R)} < -\dfrac{6s-2}{3}\obeta.
\end{equation}
Conversely, if $A:=\IC[1]$ is asymptotically $\labs$-stable, then Theorem \ref{vert-prop-2} implies that the inequality in display (\ref{cond-c}) is satisfied. 

Note that if $C$ is irreducible and reduced and $R\into\OC$ is a subsheaf, then 
$$ \ch_2(R)=\ch_2(\OC)=\deg(C) ~~{\rm and}~~ $$
$$ \ch_3(R) \le \ch_3(\OC) = \chi(\OC)-\td_1(X)\deg(C) \le 1-\ch_1(TX)/2. $$
Therefore, if $\ch_1(TX)\ge2$, then the inequality in equation (\ref{cond-c}) is automatically satisfied whenever $s>1/3$ and $\obeta<0$. If $\obeta=0$ and $\ch_1(TX)>2$ then the inequality in equation \eqref{cond-c} is satisfied for all $s>0$.\hfill\qed
\end{example}

\begin{remark}
Notice that unlike in the case of surfaces, the limit semistable objects along a vertical line are far from being (shifts of) Gieseker semistable sheaves. For instance, if $Z\subset X$ is a 0-dimensional subscheme then $I_Z[1]\in\mathcal{A}^{\alpha,\obeta}$ for all $\obeta$ and $\alpha>>0$; however, $I_Z[1]$ does not satisfy Theorem \ref{vert-prop-2}(4) and so it is not limit semistable. 
\end{remark}

As illustrated in Example \ref{stability ic}, the vertical line $\{\beta=\mu(v)\}$ also plays an important role, and we must consider vertical lines lying to its left and to its right in separate cases. First, let us start with a general results for fixed $\obeta\in\R$.

\begin{lemma}\label{vert-lem-1}
If there is $\alpha_0>0$ such that $A\in\mathcal{A}^{\alpha,\overline{\beta}}$ for all $\alpha>\alpha_0$, then $\ch_0(A_0)\leq0$, $0\leq \ch_0(A_1)\leq -\ch_0(A)$, $A_{11}=0$, and $\dim A_{00}\le2$.
%In addition:
%\begin{enumerate}
%\item if $\ch_0(A_{0})=\ch_0(A_{01})\ne0$, then $\mu(A_{01})\ge\obeta$;
%\item if $\ch_0(A_{1})=\ch_0(A_{10})\ne0$, then $\mu(A_{10})\le\obeta$.
%\end{enumerate}
\end{lemma}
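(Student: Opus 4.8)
The plan is to translate the hypothesis into statements about where the two cohomology objects $A_0=\mathcal{H}^0_{\coh^{\obeta}}(A)$ and $A_1=\mathcal{H}^{-1}_{\coh^{\obeta}}(A)$ sit with respect to the torsion pairs on $\coh^{\obeta}(X)$, and then to exploit the large-volume asymptotics of $\nu_{\alpha,\obeta}$. Since $\mathcal{A}^{\alpha,\obeta}=\langle\mathcal{F}_{\alpha,\obeta}[1],\mathcal{T}_{\alpha,\obeta}\rangle$ is a tilt of the \emph{fixed} heart $\coh^{\obeta}(X)$, the objects $A_i$ are independent of $\alpha$, vanish for $i\neq0,1$, and membership $A\in\mathcal{A}^{\alpha,\obeta}$ is equivalent to $A_0\in\mathcal{T}_{\alpha,\obeta}$ and $A_1\in\mathcal{F}_{\alpha,\obeta}$. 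The decisive consequence of the hypothesis is that this holds for \emph{every} $\alpha>\alpha_0$ at once, so $A_0\in\bigcap_{\alpha>\alpha_0}\mathcal{T}_{\alpha,\obeta}$ and $A_1\in\bigcap_{\alpha>\alpha_0}\mathcal{F}_{\alpha,\obeta}$. As $A_0,A_1\in\coh^{\obeta}(X)$ have sheaf cohomology in degrees $-1,0$ only, we are in the setting of \eqref{3ts}, where $A_{01},A_{11}\in\free{\obeta}$ are torsion-free (so $\ch_0\ge0$) with $\ch_1^{\obeta}\le0$, while $A_{00},A_{10}\in\tors{\obeta}$ satisfy $\ch_0\ge0$.

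First I would establish $\dim A_{00}\le2$, i.e. $\ch_0(A_{00})=0$. Suppose instead $\ch_0(A_{00})>0$. Then $A_{00}/\mathrm{tors}$ is again in $\tors{\obeta}$, is torsion-free of positive rank, and hence has $\ch_1^{\obeta}>0$ (by definition of $\tors{\obeta}$ a nonzero torsion-free object has strictly positive Mumford slope). It is a quotient of $A_0$ in $\coh^{\obeta}(X)$, so it may be used to test membership in $\mathcal{T}_{\alpha,\obeta}$; but
\[
\nu_{\alpha,\obeta}(A_{00}/\mathrm{tors})=\frac{\ch_2^{\obeta}-\tfrac{\alpha^2}{2}\ch_0}{\alpha\,\ch_1^{\obeta}}\longrightarrow-\infty\qquad(\alpha\to\infty),
\]
which contradicts $A_0\in\mathcal{T}_{\alpha,\obeta}$ for all large $\alpha$. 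Hence $\ch_0(A_{00})=0$. The first triangle of \eqref{3ts} then gives $\ch_0(A_0)=\ch_0(A_{00})-\ch_0(A_{01})=-\ch_0(A_{01})\le0$, which is the assertion $\ch_0(A_0)\le0$.

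Next I would prove $A_{11}=0$ by the dual argument on the subobject side. From $A_{11}[1]\to A_1\to A_{10}$ the object $A_{11}[1]$ is a subobject of $A_1$ in $\coh^{\obeta}(X)$, so $A_1\in\mathcal{F}_{\alpha,\obeta}$ forces $\nu_{\alpha,\obeta}(A_{11}[1])\le0$ for all $\alpha>\alpha_0$. If $A_{11}\neq0$ it is torsion-free of positive rank with $\ch_1^{\obeta}(A_{11})\le0$, and since $\ch(A_{11}[1])=-\ch(A_{11})$ one computes $\nu_{\alpha,\obeta}(A_{11}[1])\to+\infty$ when $\ch_1^{\obeta}(A_{11})<0$, while $\nu_{\alpha,\obeta}(A_{11}[1])=+\infty$ when $\ch_1^{\obeta}(A_{11})=0$; either way this contradicts $\nu_{\alpha,\obeta}(A_{11}[1])\le0$. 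Thus $A_{11}=0$, so $A_1=A_{10}$ is a genuine sheaf in $\tors{\obeta}$ and $\ch_0(A_1)\ge0$. Finally the third triangle gives $\ch_0(A)=\ch_0(A_0)-\ch_0(A_1)$, and combined with $\ch_0(A_0)\le0$ this yields $\ch_0(A_1)=\ch_0(A_0)-\ch_0(A)\le-\ch_0(A)$, completing the two remaining inequalities.

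The one genuinely delicate ingredient, and the only place the hypothesis ``for all $\alpha>\alpha_0$'' is essential, is the large-volume behaviour of the tilt slope: a torsion-free object of positive rank drives $\nu_{\alpha,\obeta}$ to $-\infty$, whereas its shift (of negative rank) drives it to $+\infty$. This is precisely what converts membership in $\mathcal{T}_{\alpha,\obeta}$ (respectively $\mathcal{F}_{\alpha,\obeta}$) for all large $\alpha$ into the rank constraints above. The bookkeeping most prone to error is the sign tracking under the shift by $[1]$ together with the boundary convention $\nu_{\alpha,\obeta}=+\infty$ when $\ch_1^{\obeta}=0$, which is why the case $\ch_1^{\obeta}(A_{11})=0$ must be handled separately.
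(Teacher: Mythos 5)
Your proof is correct and establishes all four assertions. The core strategy is the same as the paper's --- membership in $\mathcal{A}^{\alpha,\obeta}$ for every $\alpha>\alpha_0$ is converted into rank constraints via the $\alpha\to\infty$ asymptotics --- but the supporting machinery is genuinely different. The paper argues through the imaginary part $\rho$ of the central charge: $\rho_{A_0}\ge0$ and $\rho_{A_1}\le0$, quoted from \cite[Proposition 2.3]{JM19}, give $\ch_0(A_0)\le0$ and $\ch_0(A_1)\ge0$ directly; $A_{11}=0$ is obtained from $A_{11}[2]\in\mathcal{A}^{\alpha,\obeta}$, $\rho_{A_{11}}\ge0$, and the reflexivity of $A_{11}$ (\cite[Lemma 2.8]{JM19}); and $\ch_0(A_{00})=0$ is proved by a case split on $\ch_1^{\obeta}(A_{00})$, invoking \cite[Proposition 2.1]{JM19} when it vanishes. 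You instead exploit the finer torsion-pair structure: $A_0\in\mathcal{T}_{\alpha,\obeta}$ is tested against the quotient $A_{00}/\mathrm{tors}$ (passing to the torsion-free quotient neatly sidesteps the degenerate case $\ch_1^{\obeta}=0$), and $A_1\in\mathcal{F}_{\alpha,\obeta}$ is tested against the subobject $A_{11}[1]$, where mere torsion-freeness of objects of $\free{\obeta}$ suffices and no appeal to reflexivity is needed. The rank inequalities for $A_0$ and $A_1$ then fall out of your two vanishing statements rather than being proved independently, reversing the paper's logical order. The trade-off: your argument is self-contained, relying only on the definitions of the torsion pairs and the sign conventions for $\nu_{\alpha,\obeta}$, while the paper's is shorter because the $\rho$-positivity statements are available off the shelf. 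One cosmetic slip: the triangle $A_{01}[1]\to A_0\to A_{00}$ you invoke is the second, not the first, displayed in (\ref{3ts}).
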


\begin{proof}
Since $A_1[1]$ and $A_0$ are both objects in $\mathcal{A}^{\alpha,\overline{\beta}}$ whenever $A$ is, \cite[Proposition 2.3]{JM19} implies that
$$ \rho_{A_0}(\alpha,\overline{\beta})\geq0 ~~{\rm and}~~ \rho_{A_{1}}(\alpha,\overline{\beta})\leq0. $$
However, note that
$$ \lim_{\alpha\to\infty} \dfrac{1}{\alpha^2} \rho_{A_k}(\alpha,\overline{\beta}) =
-\dfrac{1}{2} \ch_0(A_k) $$
for $k=0,1$. It follows that $\ch_0(A_0)\leq0$ and $\ch_0(A_1)\geq0$. Since $\ch(A)=\ch(A_0)-\ch(A_1)$, we also conclude that $\ch_0(A_1)\leq -\ch_0(A)$.

By Lemma \cite[Lemma 2.8]{JM19}, $A_{11}$ is a reflexive sheaf; since $A_{11}[2]\in\mathcal{A}^{\alpha,\overline{\beta}}$, \cite[Proposition 2.3]{JM19} implies that
$\rho_{A_{11}}(\alpha,\overline{\beta})\geq0$. By the same argument as above, we conclude that $\ch_0(A_{11})\leq0$, which forces $A_{11}$ to be the zero sheaf.

If $\ch^{\overline{\beta}}_1(A_{00})=0$, then $\rho_{A_{00}}(\alpha,\overline{\beta})\ge0$ for all $\alpha>0$ by \cite[Proposition 2.1]{JM19}, thus $\ch_0(A_{00})=\ch_1(A_{00})=0$.

It follows that $\ch^{\overline{\beta}}_1(A_{00})>0$ since $A_{00}\in\tors\obeta$. Since $A_{00}$ is a quotient of $A_0\in\mathcal{T}_{\alpha,\overline{\beta}}$, we have  
$$ \nu_{\alpha,\overline{\beta}}(A_{00}) > 0 ~\Longrightarrow~
\lim_{\alpha\to\infty} \dfrac{1}{\alpha^2}\nu_{\alpha,\overline{\beta}}(A_{00}) = \dfrac{-\ch_0(A_{00})}{2\ch^{\overline{\beta}}_1(A_{00})}\ge0 , $$
hence $\ch_0(A_{00})=0$.

%thus $\ch^{\overline{\beta}}_1(A_{00})\le0$ because $\ch_0(A_{00})\ge0$ as $A_{00}$ is a sheaf. Proposition \ref{proprank1} then forces $\ch^{\overline{\beta}}_1(A_{00})=0$ and then $\rho_{A_{00}}(\alpha,\overline{\beta})\ge0$ for all $\alpha>0$. It follows that $\ch_0(A_{00})=0$, hence also $\ch_1(A_{00})=0$, 
%Finally, since $A_0\in\mathcal{B}^{\obeta}$, Proposition \ref{proprank1} implies that $\ch_1^{\obeta}(A_0)\ge0$, thus $\mu(A_0)\ge\obeta$. Similarly, $A_1[1]\in\mathcal{B}^{\obeta}$ implies that $\mu(A_{1})\le\obeta$.
\end{proof}

%%%%%%%%%%%%%%%%%%%%%%%%%%%%%%%%%%%%%%%%%%%%%%%%%%%%%%%%%%%%%%%%%%%%%%%%%%%%

\subsection{Proof of Theorem \ref{vert-prop-1}}

The first step for proving Theorem \ref{vert-prop-1} is to check that an object $A\in\dbx$ satisfying the hypotheses does belong to $\mathcal{A}^{\alpha,\obeta}$ when $\alpha$ is sufficiently large. We fix $\obeta\in\R$ and start by considering the case $\mu(A)>\obeta$.

\begin{lemma}\label{mu-sst}
If $E$ is a $\mu_{\le2}$-semistable sheaf with $\mu(E)>\obeta$, then there exists $\alpha_0>0$, such that $E\in\free{\alpha,\obeta}$ for every $\alpha>\alpha_0$.
\end{lemma}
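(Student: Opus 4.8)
The plan is to first put $E$ inside the tilted heart and then to bound all of its $\coh^{\obeta}(X)$-subobjects simultaneously. Since $E$ is $\mu$-semistable (this is contained in $\mu_{\le2}$-semistability) and of positive rank with $\mu(E)>\obeta$, it is torsion free, and every sheaf quotient $E\onto Q$ satisfies $\mu_{\obeta}(Q)\ge\mu_{\obeta}(E)=\mu(E)-\obeta>0$; hence $E\in\tors{\obeta}\subseteq\coh^{\obeta}(X)$. By definition $E\in\free{\alpha,\obeta}$ means precisely that $\nu_{\alpha,\obeta}(F)\le0$, equivalently $\rho_F(\alpha,\obeta)\le0$, for every subobject $F\into E$ in $\coh^{\obeta}(X)$, so the task is to produce a single $\alpha_0$ beyond which no such $F$ can have $\rho_F(\alpha,\obeta)>0$.

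First I would pin down the shape of a subobject $F\into E$. Writing the long exact cohomology sequence of $0\to F\to E\to G\to0$ and using that $E$ is a sheaf gives $\calh^{-1}(F)=0$, so $F$ is a sheaf in $\tors{\obeta}$ fitting into an extension $0\to N\to F\to F'\to0$, with $N=\calh^{-1}(G)\in\free{\obeta}$ and $F'=\im(F\to E)\subseteq E$. As objects of $\free{\obeta}$ have no torsion and $E$ is torsion free, both $N$ and $F'$, and hence $F$, are torsion free; a nonzero $F\in\tors{\obeta}$ then has $\ch_0(F)\ge1$ and $\ch_1^{\obeta}(F)>0$. Two elementary estimates now combine into a uniform bound: $\ch_1^{\obeta}(N)\le0$ gives $\ch_1^{\obeta}(F)\le\ch_1^{\obeta}(F')$, while $\mu$-semistability of $E$ applied to $F'\subseteq E$ gives $\ch_1^{\obeta}(F')\le\mu_{\obeta}(E)\,\ch_0(F')\le\mu_{\obeta}(E)\,\ch_0(E)$. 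Setting $C:=\mu_{\obeta}(E)\,\ch_0(E)=(\mu(E)-\obeta)\ch_0(E)$, every subobject satisfies $0<\ch_1^{\obeta}(F)\le C$.

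The key step is to turn a hypothetical destabilization into the bound $\alpha<C$. If $E\notin\free{\alpha,\obeta}$, its maximal destabilizing subobject $F$ is $\nu_{\alpha,\obeta}$-semistable with $\rho_F(\alpha,\obeta)>0$, i.e. $\ch_2^{\obeta}(F)>\tfrac{\alpha^2}{2}\ch_0(F)$. Applying the Bogomolov--Gieseker inequality for tilt-semistable objects, $(\ch_1^{\obeta}(F))^2\ge2\ch_0(F)\ch_2^{\obeta}(F)$, I would obtain $\alpha^2\ch_0(F)^2<(\ch_1^{\obeta}(F))^2\le C^2$; since $\ch_0(F)\ge1$ this forces $\alpha<C$. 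Consequently $\alpha_0:=C$ does the job: for every $\alpha>\alpha_0$ no subobject of $E$ destabilizes, so $E\in\free{\alpha,\obeta}$.

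The hard part is the uniformity of $\alpha_0$ across the infinitely many subobjects $F$, not any single estimate: for a fixed $F$ one trivially has $\rho_F(\alpha,\obeta)\to-\infty$, but $\ch_2^{\obeta}(F)$ itself is not bounded uniformly over subsheaves of $E$. The whole point of passing to the \emph{maximal} destabilizer is that it is tilt-semistable, so Bogomolov--Gieseker lets me trade the unruly $\ch_2^{\obeta}(F)$ for $\ch_1^{\obeta}(F)$, which $\mu$-semistability of $E$ does control uniformly. The only routine points that still need care are the torsion-freeness of $N$ and $F'$ (to guarantee $\ch_0(F)\ge1$) and the fact that a nonzero $F$ forces $F'\ne0$, which follows from $\free{\obeta}\cap\tors{\obeta}=0$.
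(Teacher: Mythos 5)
Your proof is correct, and it takes a genuinely different route from the paper's. The paper's own argument is short because it leans on the wall machinery of \cite{JM19}: by \cite[Theorem 5.2 and Lemma 5.3(2)]{JM19}, a $\mu_{\le2}$-semistable sheaf $E$ is itself $\nu_{\alpha,\obeta}$-semistable once $\alpha$ exceeds the radius of its unique actual $\nu$-wall, and since $\rho_E(\alpha,\obeta)<0$ for large $\alpha$, every subobject $F\into E$ in $\coh^{\obeta}(X)$ then satisfies $\nu_{\alpha,\obeta}(F)\le\nu_{\alpha,\obeta}(E)\le0$. You never invoke tilt-semistability of $E$ itself --- and in fact you could not, since a merely $\mu$-semistable sheaf need not be $\nu_{\alpha,\obeta}$-semistable for any $\alpha$; this is exactly where the $\mu_{\le2}$ hypothesis enters the paper's argument. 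Instead you (i) bound $\ch_1^{\obeta}(F)$ uniformly over all subobjects $F\into E$ by $C=(\mu(E)-\obeta)\ch_0(E)$, using the decomposition $0\to N\to F\to F'\to 0$ with $N\in\free{\obeta}$ and $F'\subseteq E$ torsion free, and (ii) pass to the maximal $\nu_{\alpha,\obeta}$-destabilizer, which is tilt-semistable by construction, so that the classical Bogomolov inequality for tilt-semistable objects (proved in \cite{BMT,BMS}) converts $\rho_F(\alpha,\obeta)>0$ into $\alpha<C$. This buys you an explicit threshold $\alpha_0=(\mu(E)-\obeta)\ch_0(E)$, independence from the results of \cite{JM19}, and a formally stronger statement, since only $\mu$-semistability (which, under the paper's slope conventions, already entails torsion-freeness) is used; what the paper's route buys is brevity, given that \cite{JM19} is already part of its toolkit. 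Two points of hygiene for a final write-up: your opening assertion that $E\in\free{\alpha,\obeta}$ is equivalent to $\rho_F(\alpha,\obeta)\le0$ for all subobjects is only valid once you know $\ch_1^{\obeta}(F)>0$ for all of them (otherwise $\nu_{\alpha,\obeta}(F)=+\infty$ can coexist with $\rho_F\le0$), so the logical order of your steps matters; and the existence of the maximal destabilizing subobject, i.e.\ of Harder--Narasimhan filtrations for $\nu_{\alpha,\obeta}$ on $\coh^{\obeta}(X)$, is a standard fact from \cite{BMT} that deserves an explicit citation.
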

\begin{proof}
Since $E$ is $\mu$-semistable, we have that $E\in\tors{\obeta}\subset\coh^{\obeta}(X)$. Moreover, \cite[Theorem 5.2]{JM19} implies that $E$ is $\nu_{\alpha,\obeta}$-semistable whenever $\alpha>R$, where $R$ denotes the radius of the unique actual $\nu$-wall for $E$ (this exists by \cite[Lemma 5.3(2)]{JM19}).  If $F\into E$ is a sub-object within $\coh^{\obeta}(X)$, then there is an $\alpha_0>0$, given by the larger between $R$ and any positive solution of the equation $\rho_E(\alpha,\obeta)=0$, such that for every $\alpha>\alpha_0$
$$ \nu_{\alpha,\obeta}(F) \le \nu_{\alpha,\obeta}(E) \le 0, $$
It follows that $E\in\mathcal{F}_{\alpha,\obeta}$.
\end{proof}

\begin{lemma}\label{vert-lem-2}
If $A\in\dbx$ is an object with $\mu(A)>\obeta$ satisfying the following conditions
\begin{enumerate}
\item $\calh^p(A)=0$ for $p\ne-1,0$;
\item $\dim\calh^0(A)\le1$;
\item $\calh^{-1}(A)$ is a $\mu_{\le2}$-semistable sheaf;
\end{enumerate}
%then $\calh^{-1}\left(\calh^{0}_{\obeta}(A)\right)=0$ and 
there is $\alpha_0>0$ such that $A\in\mathcal{A}^{\alpha,\overline{\beta}}$ for all $\alpha>\alpha_0$ and $\obeta<\mu(A)$.
\end{lemma}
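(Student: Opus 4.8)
The plan is to show that $A$ belongs to the double-tilted heart $\mathcal{A}^{\alpha,\obeta}$ for all sufficiently large $\alpha$, by unwinding what membership in this heart means in terms of the cohomology objects of $A$ and then verifying each requirement using the hypotheses. Recall that $\mathcal{A}^{\alpha,\obeta}=\langle \mathcal{F}_{\alpha,\obeta}[1],\mathcal{T}_{\alpha,\obeta}\rangle$ is a tilt of $\coh^{\obeta}(X)$, so an object $A$ lies in it precisely when its cohomology objects with respect to the t-structure on $\coh^{\obeta}(X)$ are concentrated in the two relevant degrees, with $\calh^{-1}_{\coh^{\obeta}}(A)\in\mathcal{F}_{\alpha,\obeta}$ and $\calh^{0}_{\coh^{\obeta}}(A)\in\mathcal{T}_{\alpha,\obeta}$. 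The first task is therefore to relate these $\coh^{\obeta}(X)$-cohomology objects to the ordinary sheaf cohomologies $\calh^{-1}(A)$ and $\calh^{0}(A)$ that appear in the hypotheses.

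First I would use condition (1) to write $A$ as an extension determined by the triangle $\calh^{-1}(A)[1]\to A\to\calh^{0}(A)$. By condition (3), $\widetilde{A}:=\calh^{-1}(A)$ is $\mu_{\le2}$-semistable, and since $\mu(A)>\obeta$ I want to place $\widetilde{A}[1]$ inside $\mathcal{A}^{\alpha,\obeta}$ for large $\alpha$. This is exactly where Lemma \ref{mu-sst} enters: applied to $\widetilde{A}$ (noting $\mu(\widetilde{A})=\mu(A)>\obeta$), it produces $\alpha_0>0$ with $\widetilde{A}\in\mathcal{F}_{\alpha,\obeta}$ for $\alpha>\alpha_0$, hence $\widetilde{A}[1]\in\mathcal{A}^{\alpha,\obeta}$. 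For the quotient term, condition (2) says $\calh^{0}(A)$ is a sheaf of dimension at most $1$, so $\ch^{\obeta}_0(\calh^0(A))=0$ and $\ch^{\obeta}_1(\calh^0(A))\ge0$; such a sheaf lies in $\tors{\obeta}$, and I would check that its image in $\coh^{\obeta}(X)$ lands in $\mathcal{T}_{\alpha,\obeta}$ for $\alpha$ large. The point is that a torsion sheaf of dimension $\le1$ has $\nu_{\alpha,\obeta}\ge0$ on every quotient once $\alpha$ is large (the $\alpha^2\ch^{\obeta}_0/2$ term vanishes and the leading behaviour is governed by $\ch^{\obeta}_1\ge0$), so it sits in $\mathcal{T}_{\alpha,\obeta}$.

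With $\widetilde{A}[1]\in\mathcal{A}^{\alpha,\obeta}$ and $\calh^{0}(A)\in\mathcal{A}^{\alpha,\obeta}$ for all $\alpha>\alpha_0$ (after possibly enlarging $\alpha_0$), the conclusion $A\in\mathcal{A}^{\alpha,\obeta}$ follows because $\mathcal{A}^{\alpha,\obeta}$ is extension-closed and $A$ is an extension of these two objects via the triangle above. I would organise the argument so that both memberships hold on a common ray $\alpha>\alpha_0$, taking $\alpha_0$ to be the maximum of the finitely many thresholds arising from Lemma \ref{mu-sst} and from the torsion-sheaf computation.

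The main obstacle I anticipate is the careful identification of the $\coh^{\obeta}(X)$-cohomology of $A$ with the ordinary cohomology sheaves, and in particular making sure that the extension structure survives the tilt so that $A$ genuinely has only the two expected cohomology objects in $\coh^{\obeta}(X)$. Because $\widetilde{A}$ is placed in $\mathcal{F}_{\obeta}[1]$-position while $\calh^0(A)$ sits in $\tors{\obeta}$, one must confirm there is no unexpected cancellation or shift pushing a piece of $A$ out of the allowed window $[-1,0]$; this is handled by the $\mu$-semistability of $\widetilde{A}$ (so that $\widetilde{A}\in\free{\obeta}$, placing $\widetilde{A}[1]$ correctly) together with the dimension bound on $\calh^0(A)$ (so that it lies in $\tors{\obeta}$). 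The quantitative bounds of conditions (2), (4), (5) play no role here — they are needed for stability, not membership — so the only genuinely delicate point is the uniform choice of $\alpha_0$ and the verification that $\calh^0(A)\in\mathcal{T}_{\alpha,\obeta}$ for large $\alpha$, which reduces to the elementary asymptotics of $\nu_{\alpha,\obeta}$ on torsion sheaves.
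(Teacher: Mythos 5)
Your proposal is correct and takes essentially the same route as the paper's own proof: decompose $A$ via the triangle $\calh^{-1}(A)[1]\to A\to\calh^{0}(A)$, apply Lemma \ref{mu-sst} to $\tilde{A}=\calh^{-1}(A)$ (using $\mu(\tilde{A})=\mu(A)>\obeta$) to get $\tilde{A}\in\free{\alpha,\obeta}$, hence $\tilde{A}[1]\in\mathcal{A}^{\alpha,\obeta}$, for $\alpha>\alpha_0$; note that the dimension-$\le1$ sheaf $\calh^{0}(A)$ lies in $\tors{\alpha,\obeta}$; and conclude by extension-closure of the heart.

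Two slips in your write-up are worth correcting, though neither breaks the argument. First, in your closing paragraph you assert that $\tilde{A}\in\free{\obeta}$, ``placing $\tilde{A}[1]$ correctly.'' This is false in the regime of this lemma: a $\mu$-semistable sheaf with $\mu(\tilde{A})>\obeta$ lies in $\tors{\obeta}$, not $\free{\obeta}$ (it is its own subsheaf of slope $>\obeta$). The shift that places $\tilde{A}[1]$ inside $\mathcal{A}^{\alpha,\obeta}$ comes from the \emph{second} tilt, namely from $\tilde{A}\in\free{\alpha,\obeta}\subset\coh^{\obeta}(X)$, which is exactly what Lemma \ref{mu-sst} supplies; membership of $\tilde{A}$ in $\free{\obeta}$, so that $\tilde{A}[1]\in\coh^{\obeta}(X)$, is instead the mechanism of the complementary case $\obeta\ge\mu(A)$ treated in Lemmas \ref{vert-lem-4}--\ref{vert-lem-6}. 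Since the operative step in your main argument is the correct invocation of Lemma \ref{mu-sst}, the proof survives, but the parenthetical as stated is wrong. Second, for $T=\calh^{0}(A)$ of dimension $\le1$ one has $\ch_1(T)=0$ exactly, hence $\ch_1^{\obeta}(T)=0$ and likewise $\ch_1^{\obeta}(Q)=0$ for every quotient $T\onto Q$ in $\coh^{\obeta}(X)$, so $\nu_{\alpha,\obeta}=+\infty$ on all such quotients and $T\in\tors{\alpha,\obeta}$ for \emph{every} $(\alpha,\obeta)$, with no need to take $\alpha$ large (this is what the paper asserts). Your weaker claim ``$\nu_{\alpha,\obeta}\ge0$ on every quotient for $\alpha\gg0$'' would not by itself suffice, since $\mathcal{T}_{\alpha,\obeta}$ requires strictly positive tilt-slope on quotients.
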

\begin{proof}
Let $E:=\calh^{-1}(A)$ and $T:=\calh^0(A)$, so $A$ fits into the following triangle
$$ E[1] \to A \to T \to E[2]. $$
Since $\mu(E)=\mu(A)$, Lemma \ref{mu-sst} implies that there is $\alpha_0>0$ such that $E\in\free{\alpha,\obeta}$ whenever $\alpha>\alpha_0$. Since $\dim A_{00}\le1$, we have that $A_{00}$ belongs to both $\tors{\beta}$ and $\tors{\alpha,\beta}$ for every $(\alpha,\beta)\in\HH$. It follows that $A\in\mathcal{A}^{\alpha,\obeta}$ whenever $\alpha>\alpha_0$, as desired.

%In the notation of Section \ref{2nd tilt}, the first two hypotheses imply that $A_{11}=0$ and $\ch_k(\tilde{A})=-\ch_k(A)$ for $k=0,1$, thus $\mu(\tilde{A})=\mu(A)$; note that $\ch(A)$ must also satisfy the Bogomolov inequality. Moreover, we have the following short exact sequence in $\coh(X)$:
%$$ 0\to A_{10} \to \tilde{A} \to A_{01} \to 0 . $$
%Since $\tilde{A}$ is $\mu$-semistable, we must have $\mu(A_{01})\ge\mu(\tilde{A})>\obeta$; however, $A_{01}\in\free{\obeta}$, so $\mu(A_{01})\le\obeta$, thus in fact $A_{01}=0$.

%It follows that $A$ fits into the following triangle
%$$ \tilde{A}[1] \to A \to A_{00}. $$ 
%Since $\dim A_{00}\le1$, we have that $A_{00}$ belongs to both $\tors{\beta}$ and $\tors{\alpha,\beta}$ for every $(\alpha,\beta)\in\HH$. On the other hand, Lemma \ref{mu-sst} guarantees that $\tilde{A}$ belongs to $\free{\alpha,\obeta}$ when $\alpha$ is sufficiently large.
\end{proof}

The next step is to obtain an analogous statement for $\obeta\ge\mu(A)$, which requires a slightly different approach.

\begin{lemma} \label{vert-lem-4}
Let $E$ be a $\mu$-semistable sheaf with $\mu(E)<\obeta$ and such that every quotient sheaf $E\onto G$ with $\mu(G)=\mu(E)$ and $0<\ch_0(G)<\ch_0(E)$ satisfies $\delta_{20}(E,G)\ge0$. Then there exists $\alpha_0>0$ such that $E[1]\in\tors{\alpha,\obeta}$ whenever $\alpha>\alpha_0$.
\end{lemma}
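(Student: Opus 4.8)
The plan is to realise $E[1]$ as an object of $\coh^{\obeta}(X)$ and then to show that its most destabilising $\nu_{\alpha,\obeta}$-quotient has strictly positive slope once $\alpha$ exceeds a threshold depending only on $\mu(E)$ and $\obeta$. Since $E$ is $\mu$-semistable with $\mu_{\obeta}(E)=\mu(E)-\obeta<0$, every subsheaf of $E$ has negative $\mu_{\obeta}$, so $E\in\free{\obeta}$ and hence $E[1]\in\coh^{\obeta}(X)$; moreover a direct computation gives $\nu_{\alpha,\obeta}(E[1])=\nu_{\alpha,\obeta}(E)\to+\infty$, so the real issue is one of \emph{uniformity} in the threshold. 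By definition $E[1]\in\tors{\alpha,\obeta}$ exactly when every quotient $E[1]\onto Q$ in $\coh^{\obeta}(X)$ has $\nu_{\alpha,\obeta}(Q)>0$, and since the minimal Harder--Narasimhan factor of $E[1]$ is itself such a quotient, it will suffice to treat $\nu_{\alpha,\obeta}$-semistable $Q$.

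First I would pin down the structure of such a quotient. Taking standard cohomology of $0\to K\to E[1]\to Q\to0$ yields $\calh^{0}(Q)=0$, so $Q=E'[1]$ with $E'=\calh^{-1}(Q)\in\free{\obeta}$, together with an exact sequence $0\to\calh^{-1}(K)\to E\to E'\to\calh^{0}(K)\to0$. Setting $E''=E/\calh^{-1}(K)$, this exhibits $E''$ as a quotient of $E$ sitting inside $E'$ with cokernel $T=\calh^{0}(K)\in\tors{\obeta}$. If $\ch_1^{\obeta}(E')=0$ then $\nu_{\alpha,\obeta}(Q)=+\infty$ and there is nothing to prove; otherwise $E'\in\free{\obeta}$ forces $\ch_0(E')>0$ and $\ch_1^{\obeta}(E')<0$. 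Now $\mu(E'')\ge\mu(E)$ because $E$ is $\mu$-semistable, while the torsion-free part of $T\in\tors{\obeta}$ has all its slopes $>\obeta>\mu(E)$; since $E'$ is built by extension from $E''$ and $T$, a rank-weighted comparison gives $\mu(E)\le\mu(E')\le\obeta$, i.e. $\mu_{\obeta}(E)\le\mu_{\obeta}(E')\le0$.

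The decisive step will be to bound $\ch_2^{\obeta}(E')/\ch_0(E')$ from above uniformly. Applying the classical (degree-two) Bogomolov inequality to the tilt-semistable object $Q$ gives $(\ch_1^{\obeta}(E'))^2\ge2\ch_0(E')\ch_2^{\obeta}(E')$, whence $\ch_2^{\obeta}(E')/\ch_0(E')\le\tfrac12\mu_{\obeta}(E')^2\le\tfrac12\mu_{\obeta}(E)^2$ using the two-sided slope bound just obtained. Consequently $\rho_{E'}(\alpha,\obeta)=\ch_2^{\obeta}(E')-\tfrac{\alpha^2}{2}\ch_0(E')<0$ whenever $\alpha>\obeta-\mu(E)$, and since $\ch_1^{\obeta}(E')<0$ this forces $\nu_{\alpha,\obeta}(Q)>0$; taking $\alpha_0=\obeta-\mu(E)$ then completes the argument. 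The main obstacle is exactly this uniformity: a priori the cokernel $T$ could inflate $\ch_2^{\obeta}(E')$ and push the threshold to infinity, and what rescues the argument is that the minimal destabilising factor is tilt-semistable, so Bogomolov caps $\ch_2^{\obeta}(E')$. The sharp case is $\mu(E')=\mu(E)$, where the inequality above is tight; here the hypothesis $\delta_{20}(E,G)\ge0$ for same-slope quotients $G$ of $E$ supplies the bound $\ch_2^{\obeta}(E'')/\ch_0(E'')\le\ch_2^{\obeta}(E)/\ch_0(E)$ directly, controlling the borderline quotients without invoking semistability and in fact showing that same-slope quotients never decrease the slope, so that $E[1]$ is genuinely $\nu_{\alpha,\obeta}$-semistable for $\alpha\gg0$.
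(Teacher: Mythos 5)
Your proof is correct, and it takes a genuinely different route from the paper's. The paper argues quotient by quotient: writing a quotient of $E[1]$ in $\coh^{\obeta}(X)$ as $K[1]$ with $0\to\calh^{-1}(F)\to E\to K\to\calh^{0}(F)\to0$, it shows $\mu(E)\le\mu(K)$ and then compares $\nu_{\alpha,\obeta}(K[1])$ directly with $\nu_{\alpha,\obeta}(E[1])$: when $\mu(K)>\mu(E)$ the difference eventually has the right sign by a limit computation, and when $\mu(K)=\mu(E)$ the difference equals $\delta_{20}(E,K)\,(\mu(E)-\obeta)/\bigl(\ch_1^{\obeta}(K)\ch_1^{\obeta}(E)\bigr)$ up to a positive factor, which is exactly where the hypothesis $\delta_{20}(E,G)\ge0$ enters. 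You instead reduce to $\nu_{\alpha,\obeta}$-semistable quotients via the minimal Harder--Narasimhan factor and invoke the classical Bogomolov inequality for tilt-semistable objects (\cite[Theorem 7.3.1]{BMT}, \cite{BMS}) to cap $\ch_2^{\obeta}(E')/\ch_0(E')$ by $\tfrac12\mu_{\obeta}(E)^2$, using the slope sandwich $\mu_{\obeta}(E)\le\mu_{\obeta}(E')\le0$, which you derive correctly from the same four-term sheaf sequence. This buys two things the paper's write-up does not make explicit: a threshold $\alpha_0=\obeta-\mu(E)$ that is \emph{uniform} over all quotients (the paper's limit arguments produce a threshold depending on the quotient $G$, and the passage to a single $\alpha_0$ is left implicit), and the observation that the $\delta_{20}$ hypothesis is not needed for bare membership $E[1]\in\tors{\alpha,\obeta}$ --- your argument never uses it, because positivity of $\nu_{\alpha,\obeta}$ on quotients is weaker than the comparison $\nu_{\alpha,\obeta}(Q)\ge\nu_{\alpha,\obeta}(E[1])$ that the paper actually establishes, and it is only the latter (relevant to asymptotic tilt-semistability of $E[1]$) that the hypothesis controls in the equal-slope case. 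The price is the appeal to the tilt-stability Bogomolov theorem, which the paper's more elementary slope computation avoids. Two degenerate cases you gloss over, $E'=0$ and $E''=0$ (the latter forcing $E'\in\free{\obeta}\cap\tors{\obeta}$, hence $E'=0$), are harmless, and your closing remark about genuine $\nu_{\alpha,\obeta}$-semistability is a side observation not needed for the lemma.
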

\begin{proof}
First, note that $E\in\free{\obeta}$ when $\obeta\ge\mu(E)$, so $E[1]\in\coh^{\obeta}(X)$. Let $E[1]\onto G$ be a quotient object within $\coh^{\obeta}(X)$; clearly, $\calh^0(G)=0$, so $G=K[1]$ for some sheaf $K\in\free{\obeta}$. Letting $F$ be the kernel of the epimorphism  $E[1]\onto G$, we obtain the following short exact sequence in $\coh(X)$:
\begin{equation}
0 \to \calh^{-1}(F) \to E \to K \stackrel{f}{\rightarrow} \calh^{0}(F) \to 0.
\end{equation}
Assume that $\calh^{0}(F)\ne0$ and let $K'=:\ker f$; note that $\mu(K)\le \obeta < \mu(\calh^{0}(F))$ since $K\in\free{\obeta}$ while $\calh^{0}(F)\in\tors{\obeta}$, thus $\mu(K')<\mu(K)=\mu(G)$. it follows that $\mu(E)\le\mu(K')<\mu(G)$, where the first inequality comes from the $\mu$-semistability of $E$, since $K'$ is a quotient sheaf of $E$. When $\calh^{0}(F)=0$, then $K$ is a quotient sheaf of $E$, thus $\mu(E)\le\mu(G)$. 

Therefore we have
$$ \lim_{\alpha\to\infty} \dfrac{1}{\alpha^2} \left( \nu_{\alpha,\obeta}(E[1]) - \nu_{\alpha,\obeta}(G) \right) = 
\dfrac{\delta_{10}(E,G)}{2\ch_1^{\obeta}(K)\ch_1^{\obeta}(E)} \le 0, $$
since both terms in the denominator are negative. If $\mu(E)=\mu(G)$, then $\calh^{0}(F)$ must vanish, so $0<\ch_0(G)<\ch_0(E)$ and we have
$$ \nu_{\alpha,\obeta}(E[1]) - \nu_{\alpha,\obeta}(G) = 
\dfrac{\mu(E)-\obeta}{\ch_1^{\obeta}(K)\ch_1^{\obeta}(E)} \delta_{20}(E,K) \le 0. $$

We conclude that for each quotient object $E[1]\onto G$ within $\coh^{\obeta}(X)$ there is $\alpha_0>0$ such that $\nu_{\alpha,\obeta}(G) > \nu_{\alpha,\obeta}(E[1]) > 0$ whenever $\alpha>\alpha_0$. In other words, $E[1]\in\tors{\alpha,\obeta}$ for $\alpha>\alpha_0$, as desired. 
\end{proof}

\begin{example}
Here is an example of a strictly $\mu$-semistable sheaf on $X=\p3$ that satisfies the hypothesis of Lemma \ref{vert-lem-4}. Let $S$ be a strictly $\mu$-semistable rank 2 reflexive sheaf on $\p3$ with $\ch_1(S)=0$; choose an epimorphism $e:S\onto\mathcal{O}_p$ where $p$ is a point in $\p3$, and take $E:=\ker e$. One can check that if $E\onto G$ is a torsion free quotient sheaf with $\mu(G)=0$, then $G=I_{C}$ for a curve $C\subset\p3$ of degree $\deg C=-\ch_2(S)=-\ch_2(E)$. It follows that
$$ \delta_{20}(E,I_{C})=\ch_2(E)+2\deg(C)=\deg(C)>0, $$
as desired. Notice that such sheaves are not Gieseker semistable.
\hfill\qed
\end{example}

\begin{lemma}\label{vert-lem-5}
If $E$ is a $\mu$-semistable sheaf, then $E[1]\in\tors{\alpha,\mu}\subset\mathcal{A}^{\alpha,\mu}$ for every $\alpha>0$ where $\mu=\mu(E)$.
\end{lemma}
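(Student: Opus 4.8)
The plan is to exploit the fact that on the critical vertical line $\beta=\mu$ the object $E[1]$ sits exactly on the boundary locus $\ch_1^{\mu}=0$, which trivializes the positivity needed for membership in $\tors{\alpha,\mu}$ and, crucially, makes the argument uniform in $\alpha$ (in contrast to Lemma \ref{vert-lem-4}). First I would record the numerical input. Since $\mu=\mu(E)=\ch_1(E)/\ch_0(E)$ we have $\ch_1^{\mu}(E)=\ch_1(E)-\mu\,\ch_0(E)=0$, and hence $\ch_1^{\mu}(E[1])=-\ch_1^{\mu}(E)=0$ as well. Moreover, because $E$ is $\mu$-semistable, every subsheaf $F\into E$ satisfies $\mu(F)\le\mu(E)=\mu$, i.e. $\ch_1^{\mu}(F)\le0$; thus $E\in\free{\mu}$ and therefore $E[1]\in\free{\mu}[1]\subset\coh^{\mu}(X)$.

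The key step is the elementary observation that every object $A\in\coh^{\mu}(X)$ has $\ch_1^{\mu}(A)\ge0$, which is immediate from the torsion pair defining the tilt. Writing the canonical triangle $\calh^{-1}(A)[1]\to A\to\calh^0(A)$ with $\calh^{-1}(A)\in\free{\mu}$ and $\calh^0(A)\in\tors{\mu}$, one has $\ch_1^{\mu}(\calh^{-1}(A))\le0$ by definition of $\free{\mu}$, while $\ch_1^{\mu}(\calh^0(A))\ge0$ by definition of $\tors{\mu}$ (its positive-rank part has $\mu_{\mu}>0$ and its torsion part has $\ch_1\ge0$). Hence $\ch_1^{\mu}(A)=\ch_1^{\mu}(\calh^0(A))-\ch_1^{\mu}(\calh^{-1}(A))\ge0$.

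With these two facts the conclusion is forced. Given any quotient $E[1]\onto G$ in $\coh^{\mu}(X)$, let $F$ be its kernel, so that $0\to F\to E[1]\to G\to0$ is exact in the abelian category $\coh^{\mu}(X)$. Additivity of $\ch_1^{\mu}$ gives $\ch_1^{\mu}(F)+\ch_1^{\mu}(G)=\ch_1^{\mu}(E[1])=0$, while the previous paragraph gives $\ch_1^{\mu}(F)\ge0$ and $\ch_1^{\mu}(G)\ge0$; therefore $\ch_1^{\mu}(G)=0$, and by definition of the tilt $\nu_{\alpha,\mu}(G)=+\infty>0$. Since this holds for every quotient and every $\alpha>0$, we conclude $E[1]\in\tors{\alpha,\mu}\subset\mathcal{A}^{\alpha,\mu}$, the last inclusion being the very construction of $\mathcal{A}^{\alpha,\mu}$.

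I expect no serious obstacle here: unlike Lemma \ref{vert-lem-4}, no asymptotic estimate in $\alpha$ is required, precisely because $\ch_1^{\mu}(E[1])=0$ forces every quotient onto the locus $\ch_1^{\mu}=0$ where the tilt is automatically $+\infty$. The only points needing mild care are the standing assumption that $E$ is torsion-free of positive rank (so that $\mu=\mu(E)$ is finite and subsheaves have non-positive $\ch_1^{\mu}$), and the positivity $\ch_1^{\mu}\ge0$ on $\coh^{\mu}(X)$, which is intrinsic to the torsion-pair construction.
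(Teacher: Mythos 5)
Your proof is correct and follows essentially the same route as the paper's: establish $E\in\free{\mu}$ so that $E[1]\in\coh^{\mu}(X)$, then use $\ch_1^{\mu}(E[1])=0$ to force every quotient $G$ in $\coh^{\mu}(X)$ to have $\ch_1^{\mu}(G)=0$, hence $\nu_{\alpha,\mu}(G)=+\infty>0$ for all $\alpha>0$. The only difference is that you spell out the positivity $\ch_1^{\mu}\ge0$ on $\coh^{\mu}(X)$ and the additivity argument, which the paper leaves implicit (it is \cite[Proposition 2.1]{JM19}, cited elsewhere in the text).
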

\begin{proof}
We already noted, in the proof of Lemma \ref{vert-lem-4}, that $E\in\free{\mu}$, so $E[1]\in\coh^\mu(X)$. If $E[1]\onto G$ is a quotient within $\coh^\mu(X)$, then $\ch_1^\mu(G)=0$, thus $\nu_{\alpha,\mu}(G)=+\infty$ for every $\alpha>0$. It follows that $E[1]\in\tors{\alpha,\mu}$, as desired.
\end{proof}

With the previous two results in hand, we can now state a version of Lemma \ref{vert-lem-2} for $\obeta\ge\mu(A)$.

\begin{lemma}\label{vert-lem-6}
If $A\in\dbx$ is an object with $\mu(A)\le\obeta$ satisfying the following conditions
\begin{enumerate}
\item $\calh^p(A)=0$ for $p\ne-1,0$;
\item $\dim\calh^0(A)\le1$;
\item $\calh^{-1}(A)$ is a $\mu$-semistable sheaf such that every quotient sheaf $E\onto G$ with $\mu(G)=\mu(E)$ and $0<\ch_0(G)<\ch_0(E)$ satisfies $\delta_{20}(E,G)\ge0$;
\end{enumerate}
then there is $\alpha_0>0$ such that $A\in\mathcal{A}^{\alpha,\overline{\beta}}\cap\coh^{\obeta}(X)$ for all $\alpha>\alpha_0$.
\end{lemma}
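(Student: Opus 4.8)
The plan is to follow the template of Lemma~\ref{vert-lem-2}, decomposing $A$ into its two cohomology sheaves $E := \calh^{-1}(A)$ and $T := \calh^0(A)$ and showing that each lands in the torsion class $\tors{\alpha,\obeta}$ for $\alpha$ large, so that extension-closure places $A$ itself there. The essential difference from Lemma~\ref{vert-lem-2} is that now $\mu(E)\le\obeta$, so $E$ lies in $\free{\obeta}$ rather than $\tors{\obeta}$; consequently $A$ will in fact lie in $\coh^{\obeta}(X)$, which accounts for the intersection appearing in the conclusion.

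First I would verify that $A\in\coh^{\obeta}(X)$. Since $\ch_0(T)=\ch_1(T)=0$ one has $\mu(A)=\mu(E)$, so the hypothesis $\mu(A)\le\obeta$ together with the $\mu$-semistability of $E$ forces every subsheaf $F\into E$ to satisfy $\mu(F)\le\mu(E)\le\obeta$; hence $E\in\free{\obeta}$ and $E[1]\in\coh^{\obeta}(X)$. As $T$ has dimension at most $1$ it is torsion, so $T\in\tors{\obeta}\subset\coh^{\obeta}(X)$. By the description of the tilted heart through its cohomology sheaves, condition (1) then gives $A\in\coh^{\obeta}(X)$, and the triangle $E[1]\to A\to T\to E[2]$ becomes a short exact sequence $0\to E[1]\to A\to T\to 0$ in $\coh^{\obeta}(X)$.

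Next I would place both terms of this sequence in $\tors{\alpha,\obeta}$. For $E[1]$ I would split into two cases according to the position of the line relative to $\{\beta=\mu(A)\}$: if $\mu(A)<\obeta$, then condition (3) is exactly the hypothesis of Lemma~\ref{vert-lem-4}, which produces $\alpha_0>0$ with $E[1]\in\tors{\alpha,\obeta}$ for all $\alpha>\alpha_0$; if $\mu(A)=\obeta$, then $\obeta=\mu(E)$ and Lemma~\ref{vert-lem-5} gives $E[1]\in\tors{\alpha,\obeta}$ for every $\alpha>0$. For the torsion quotient $T$, the vanishing $\ch_1^{\obeta}(T)=0$ yields $\nu_{\alpha,\obeta}(T)=+\infty$, so $T\in\tors{\alpha,\obeta}$ for all $\alpha>0$. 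Since $\tors{\alpha,\obeta}$ is the torsion part of a torsion pair on $\coh^{\obeta}(X)$ and hence closed under extensions, applying it to the short exact sequence above shows $A\in\tors{\alpha,\obeta}\subset\mathcal{A}^{\alpha,\obeta}$ for $\alpha\gg0$, as required.

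The hard part will be the bookkeeping of the first step—confirming that $A$ genuinely lies in $\coh^{\obeta}(X)$, so that the defining triangle is a short exact sequence in that heart—together with correctly matching condition (3) to the hypothesis of Lemma~\ref{vert-lem-4} and isolating the boundary case $\mu(A)=\obeta$ handled by Lemma~\ref{vert-lem-5}. Once these are in place, the passage to $\tors{\alpha,\obeta}$ and the final extension-closure argument are formal.
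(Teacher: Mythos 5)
Your proposal is correct and follows essentially the same route as the paper: write $E=\calh^{-1}(A)$, $T=\calh^0(A)$, use Lemma \ref{vert-lem-4} (for $\mu(A)<\obeta$) and Lemma \ref{vert-lem-5} (for $\mu(A)=\obeta$) to place $E[1]$ in $\tors{\alpha,\obeta}$, observe that the dimension-$\le1$ sheaf $T$ lies in $\tors{\obeta}$ and $\tors{\alpha,\obeta}$ for all $\alpha$, and conclude by extension closure that $A\in\mathcal{A}^{\alpha,\obeta}\cap\coh^{\obeta}(X)$ for $\alpha\gg0$. Your explicit case split and the verification that $A\in\coh^{\obeta}(X)$ merely spell out details the paper leaves implicit.
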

\begin{proof}
Again, set $E:=\calh^{-1}(A)$ and $T:=\calh^0(A)$, and fix $\obeta\ge\mu(A)=\mu(E)$; the first hypothesis yields a triangle $E[1]\to A\to T$. Lemmas \ref{vert-lem-4}  and \ref{vert-lem-5} imply that $E\in\free{\obeta}$ and $E[1]\in\tors{\alpha,\obeta}$ for $\alpha\gg0$. In addition, it is easy to see that $T$ also belongs to $\tors{\obeta}$ and $\tors{\alpha,\obeta}$ for every $\alpha$. It follows that $A$ belongs to $\mathcal{A}^{\alpha,\obeta}\cap\coh^{\obeta}(X)$ for $\alpha\gg0$.
\end{proof}

To prove Theorem \ref{vert-prop-1} we fix an object $A$ satisfying the conditions of Theorem \ref{vert-prop-1}. Our strategy will be to take a subobject $F\into A$ which destabilizes ``to infinity" and show that this limit contradicts the conditions on $A$. But to do this we need to be able to show such an object exists. We consider actual $\lambda$-walls for $A$ along a vertical line $\{\beta=\obeta\}$, by which we mean points $(\alpha,\obeta)$ where $A$ is strictly $\lambda_{\alpha,\obeta,s}$-semistable. We let $\alpha_0>0$ be given by Lemmas \ref{vert-lem-2} and \ref{vert-lem-6}, so that $A\in\mathcal{A}^{\alpha,\overline{\beta}}$ for $\alpha>\alpha_0$. Note that unbounded walls can only cross $\beta=\overline{\beta}$ at most once because the equation for the wall is linear in $\alpha^2$. But bounded walls can and do cross vertical walls twice.

\begin{lemma}\label{vert_finite}
Fix $\obeta\in\mathbb{R}$ and $s\geq1/3$.  Then there are only finitely many actual $\lambda$-walls for $A$  intersecting the vertical line $\beta=\overline{\beta}$ for $\alpha>\alpha_0$. 
\end{lemma}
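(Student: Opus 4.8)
The plan is to reduce the statement to the finiteness of the set of numerical classes that can occur as destabilizers, and then to bound those classes using the support property through Lemma \ref{qform}. First I would observe that for a fixed numerical class $w=\ch(F)$, the locus of $\alpha$ at which $\lambda_{\alpha,\obeta,s}(F)=\lambda_{\alpha,\obeta,s}(A)$ holds on the line $\{\beta=\obeta\}$ is cut out by a polynomial of degree at most two in $\alpha^2$; indeed, clearing denominators in $\lambda_{\alpha,\obeta,s}(F)=\lambda_{\alpha,\obeta,s}(A)$ gives an expression whose coefficients are the quantities $\delta_{32}^{\obeta}(F,A)$, $\delta_{30}^{\obeta}(F,A)$, $\delta_{12}^{\obeta}(F,A)$, $\delta_{10}^{\obeta}(F,A)$, with leading $\alpha^4$-coefficient proportional to $\delta_{10}^{\obeta}(F,A)$. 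As recorded just before the statement, this is linear in $\alpha^2$ for unbounded walls and quadratic for bounded ones, so each class $w$ contributes at most two intersection points with the vertical line. It therefore suffices to prove that only finitely many classes $w$ arise from actual walls with $\alpha>\alpha_0$.

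Next, at an actual wall at $(\alpha,\obeta)$ realised by a short exact sequence $0\to F\to A\to G\to0$ in $\mathcal{A}^{\alpha,\obeta}$, all three objects are $\sigma_{\alpha,\obeta,s}$-semistable of the same slope. Lemma \ref{qform} then yields $Q_{\alpha,\obeta,K}(F),Q_{\alpha,\obeta,K}(G)\ge0$ together with $Q_{\alpha,\obeta,K}(F)+Q_{\alpha,\obeta,K}(G)\le Q_{\alpha,\obeta,K}(A)$, whence $0\le Q_{\alpha,\obeta,K}(F)\le Q_{\alpha,\obeta,K}(A)$; writing $A=F+G$ and using part (1) of the same lemma gives in addition $0\le Q_{\alpha,\obeta,K}(F,A)\le\tfrac32 Q_{\alpha,\obeta,K}(A)$. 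On the other hand, since $F,G,A$ lie in the heart their central charges have non-negative imaginary part $\rho_F,\rho_G,\rho_A\ge0$ with $\rho_F+\rho_G=\rho_A$, and equality of slopes forces $Z_{\alpha,\obeta,s}(F)=\theta\,Z_{\alpha,\obeta,s}(A)$ for some $\theta\in[0,1]$ whenever $\rho_A>0$ (the single parameter $\alpha$ with $\rho_A=0$, and the degenerate case $\theta=0$ where $Z_{\alpha,\obeta,s}(F)=0$, are treated directly). Consequently $u:=\ch(F)-\theta\,\ch(A)$ lies in $\ker Z_{\alpha,\obeta,s}$, on which the support property guarantees that $Q_{\alpha,\obeta,K}$ is negative definite. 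Expanding $Q_{\alpha,\obeta,K}(u)=Q_{\alpha,\obeta,K}(F)-2\theta\,Q_{\alpha,\obeta,K}(F,A)+\theta^2Q_{\alpha,\obeta,K}(A)$ and inserting the bounds above gives $-Q_{\alpha,\obeta,K}(u)\le 3\,Q_{\alpha,\obeta,K}(A)$, so $u$ is confined to a $Q$-ball of controlled radius. Since $\theta\in[0,1]$ and $\ch(A)$ is fixed, $\ch(F)$ is then confined to a bounded region of the space of Chern characters, and a bounded region of the Chern lattice contains only finitely many points.

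The step that requires real care, and the main obstacle, is that the form $Q_{\alpha,\obeta,K}$, the plane $\ker Z_{\alpha,\obeta,s}$, and the value $Q_{\alpha,\obeta,K}(A)$ all vary with $\alpha$, so the bounded region just produced is uniform only over a compact range $\alpha\in(\alpha_0,T]$. On such a range continuity, together with $\alpha_0>0$, keeps the restriction of $-Q_{\alpha,\obeta,K}$ to $\ker Z_{\alpha,\obeta,s}$ uniformly positive definite and $Q_{\alpha,\obeta,K}(A)$ bounded, so finiteness is immediate there. For $\alpha\to\infty$ the leading term $K\alpha^2\Delta$ of $Q$ dominates and $\ker Z_{\alpha,\obeta,s}$ degenerates, and one must instead run an asymptotic estimate: dividing $-Q_{\alpha,\obeta,K}(u)\le 3\,Q_{\alpha,\obeta,K}(A)$ by $\alpha^2$ controls the discriminant of the destabilizer in the limit, and combining this with the structural constraints on $\ch(F)$ coming from Lemmas \ref{vert-lem-1}, \ref{vert-lem-2} and \ref{vert-lem-6} rules out an infinite family of walls escaping to $\alpha=\infty$.

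This last point is precisely where the hypothesis $s\ge1/3$ enters. It permits choosing $K\in[1,6s+1)$ with $6s+1\ge3$, which keeps $Q_{\alpha,\obeta,K}$ sufficiently positive on the relevant classes to forbid the accumulation of the isolated ``bubble'' walls mentioned in the remark after Theorem \ref{vert-prop-1}; for smaller $s$ the form weakens on $\ker Z_{\alpha,\obeta,s}$ and neither the estimate nor, conjecturally, the conclusion need persist. I expect the bounded-range argument and the reduction to finitely many classes to be routine once the inequality $-Q_{\alpha,\obeta,K}(u)\le 3\,Q_{\alpha,\obeta,K}(A)$ is in place, and the genuine work to lie entirely in the uniform control of the large-$\alpha$ regime.
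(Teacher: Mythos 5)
Your argument is fine on any compact range of $\alpha$: the wall equation is quadratic in $\alpha^2$, so a fixed numerical class meets $\{\beta=\obeta\}$ at most twice, and the inequalities from Lemma \ref{qform} together with $0\le\rho_F\le\rho_A$ do confine $\ch(F)$ to finitely many values for $\alpha\in(\alpha_0,T]$ --- this is the standard local-finiteness argument. The gap is the regime $\alpha\to\infty$, which is the entire content of the lemma, and there your proof stops at an assertion. In fact your key inequality $-Q_{\alpha,\obeta,K}(u)\le 3\,Q_{\alpha,\obeta,K}(A)$ provably cannot confine $\ch(F)$ at large $\alpha$: writing $u_i$ for the twisted components of $u$, on $\ker Z_{\alpha,\obeta,s}$ one has $u_2=\tfrac{\alpha^2}{2}u_0$ and $u_3=(s+\tfrac16)\alpha^2u_1$, so the restriction of $-Q_{\alpha,\obeta,K}$ to $\ker Z_{\alpha,\obeta,s}$ equals
\[
(6s+1-K)\,\alpha^2 u_1^2 + (K-1)\,\alpha^4 u_0^2 ,
\]
while $Q_{\alpha,\obeta,K}(A)$ grows like $K\alpha^2\bigl((\ch_1^{\obeta}(A))^2-2\ch_0(A)\ch_2^{\obeta}(A)\bigr)$. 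The bound therefore forces only $|u_1|=O(1)$ and $|u_0|=O(1/\alpha)$, which still permits $u_2=O(\alpha)$ and $u_3=O(\alpha^2)$: the set of admissible values of $\ch_2(F)$ and $\ch_3(F)$ grows without bound as $\alpha\to\infty$, so no finite list of destabilizing classes emerges. Your proposed mechanism for where $s\ge1/3$ enters --- that one may take $K$ up to $6s+1\ge3$ --- is never used quantitatively, and as the display shows, enlarging $K$ does not change the order of growth of the admissible region; this is consistent with the paper's remark that for $s<1/3$ infinitely many (bubble) walls are not excluded, even though the support property with $K\in[1,6s+1)$ holds there too.

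The paper closes exactly this gap by a geometric rather than numerical argument, using the classification of wall shapes from \cite{JM19}. Closed ``bubble'' walls are excluded for $s\ge1/3$ because at $s=1/3$ a bubble has its minimum on the curve $\Xi_{u,v}$ and, being a closed curve, must meet it again on $\Theta_v$, where only finitely many actual walls exist; bounded walls crossing $\{\beta=\obeta\}$ above $\alpha_0$ must traverse the compact region between the branches of $\Theta_v$, so local finiteness applies there; and infinitely many unbounded walls crossing the line are impossible because such a wall component would need two unbounded branches, one on each side of $\{\beta=\obeta\}$, which the homogenization of the wall equation at infinity (\cite[Equation 28]{JM19}, \cite[Lemma 4.8]{JM19}) rules out for $s\ge1/3$: the only asymptote is a single vertical line $\{\beta=\beta'\}$. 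Any completion of your approach would have to recover this asymptotic classification of walls; the support property alone does not see it, so the uniform large-$\alpha$ control you defer to is not a routine estimate but the actual mathematical content of the lemma.
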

\begin{proof}
There are only finitely many actual $\lambda$-walls intersecting $\Theta_{v}$, so we will assume the walls do not intersect $\Theta_{v}$. If a connected component of a wall is a bubble then at $s=1/3$ it has a minimum which must lie on $\Xi_{u,v}$. As it is a closed curve this must intersect $\Xi_{u,v}$ again which must also be on $\Theta_v$. So we can assume the walls are not bubbles. But now any bounded actual wall intersecting $\{\beta=\obeta\}$ for $\alpha\gg0$ must traverse the region between the branches of $\Theta_v$ and, above $\alpha_0$ this is compact and so there can only be finitely many of these by local finiteness.

Arguing in a similar way, we see that the only possibility for infinitely many actual $\lambda$-walls crossing $\{\beta=\obeta\}$ is if there are infinitely many unbounded $\lambda$-walls. The component crossing $\{\beta=\obeta\}$ must have two unbounded branches (on either side of $\{\beta=\obeta\}$). But this cannot happen for $s\geq1/3$ by the form for the homogenization $\overline{f}$ of $f$ at infinity given in \cite[Equation 28]{JM19} as only $\{\beta=\beta'\}$ is an asymptote, where $\beta'$ is given in the statement of \cite[Lemma 4.8]{JM19}.
\end{proof}

\begin{lemma} \label{vert-lem-3}
If $A\in\dbx$ and $\overline{\beta}$ satisfy the conditions of Lemma \ref{vert-lem-2}, and \begin{equation} \label{star}
0 \to F \to A \to G \to 0
\end{equation}
is a short exact sequence in $\mathcal{A}^{\alpha,\overline{\beta}}$ for every $\alpha>\alpha_0$, then:
\begin{enumerate}
\item $\calh^{-2}(F)=0$ and $\calh^{-2}(G)=\calh^{-1}(\calh^{0}_{\obeta}(G))=0$;
\item $\dim \calh^{0}(F)\le2$, and $\dim \calh^{0}(G)\le1$;
\item $\calh^{-1}(F)$ and $\calh^{-1}(G)$ are torsion free.
\end{enumerate}
\end{lemma}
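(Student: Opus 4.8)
The plan is to extract the structure of $F$ and $G$ from the two long exact sequences attached to the triangle $F\to A\to G\to F[1]$---one in standard cohomology, one in $\coh^{\obeta}(X)$-cohomology---and to feed these into Lemma \ref{vert-lem-1}, which applies verbatim to both $F$ and $G$ since each lies in $\mathcal{A}^{\alpha,\obeta}$ for all $\alpha>\alpha_0$. We are in the regime of Lemma \ref{vert-lem-2}, so $\mu(A)>\obeta$; then $\calh^{-1}(A)=:E$ is the $\mu_{\le2}$-semistable sheaf (hence torsion free, lying in $\tors{\obeta}$), $\calh^{0}(A)=:T$ is torsion of dimension $\le1$, and a short computation with the truncation triangle $E[1]\to A\to T$ shows $\calh^{-1}_{\obeta}(A)=E$ and $\calh^{0}_{\obeta}(A)=T$ are genuine sheaves. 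Since every object of $\mathcal{A}^{\alpha,\obeta}$ has standard cohomology in degrees $-2,-1,0$, the standard-cohomology sequence reads
\begin{equation*}
0\to\calh^{-2}(G)\to\calh^{-1}(F)\to E\to\calh^{-1}(G)\to\calh^{0}(F)\to T\to\calh^{0}(G)\to0,
\end{equation*}
and tilting with respect to $\coh^{\obeta}(X)$ gives the six-term exact sequence
\begin{equation*}
0\to F_{1}\to E\to G_{1}\to F_{0}\to T\to G_{0}\to0
\end{equation*}
in $\coh^{\obeta}(X)$, where $F_i=\calh^{-i}_{\obeta}(F)$ and $F_{ij}=\calh^{-j}(F_i)$, and similarly for $G$.

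First I would dispose of the assertions not involving $\calh^{-1}(G)$. Applying Lemma \ref{vert-lem-1} to $F$ and to $G$ yields $F_{11}=G_{11}=0$ and $\dim F_{00},\dim G_{00}\le2$; the truncation triangles identify $\calh^{-2}(F)=F_{11}$, $\calh^{-2}(G)=G_{11}$, $\calh^{0}(F)=F_{00}$ and $\calh^{0}(G)=G_{00}$. This gives $\calh^{-2}(F)=\calh^{-2}(G)=0$ and $\dim\calh^{0}(F)\le2$. With $\calh^{-2}(G)=0$ the standard sequence begins $0\to\calh^{-1}(F)\to E$, so $\calh^{-1}(F)$ is a subsheaf of the torsion free sheaf $E$ and is therefore torsion free, settling (3) for $F$.

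Next I would treat the $G$-part of (1) and (2). The second sequence exhibits $G_{0}=\calh^{0}_{\obeta}(G)$ as a quotient of $T$ in $\coh^{\obeta}(X)$; writing $0\to N\to T\to G_{0}\to0$ and taking standard cohomology, the vanishing $\calh^{-1}(T)=0$ forces $N$ to be a sheaf, necessarily a subsheaf of $T$ and hence of dimension $\le1$. Consequently $G_{01}=\calh^{-1}(G_{0})$ embeds into the torsion sheaf $N$; but $G_{01}\in\free{\obeta}$ is torsion free, so $\calh^{-1}(\calh^{0}_{\obeta}(G))=G_{01}=0$, the second vanishing in (1). This in turn makes $G_{0}=G_{00}$ a sheaf quotient of $T$, so $\dim\calh^{0}(G)=\dim G_{00}\le\dim T\le1$, and identifies $\calh^{-1}(G)=G_{1}\in\free{\alpha,\obeta}$.

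The torsion freeness of $\calh^{-1}(G)=G_{1}$ is the heart of the matter and I expect it to be the main obstacle. Membership $G_{1}\in\free{\alpha,\obeta}$ immediately excludes torsion of dimension $\le1$, since such a subsheaf has infinite $\nu_{\alpha,\obeta}$-slope and cannot be a subobject of $G_{1}$ in $\coh^{\obeta}(X)$; what remains is codimension-one torsion $\tau\subseteq G_{1}$, which membership in $\free{\alpha,\obeta}$ alone does not rule out (it only forces $\ch^{\obeta}_{2}(\tau)\le0$). To eliminate it I would lift $\tau$ to a subobject $\tau[1]\hookrightarrow G$ in $\mathcal{A}^{\alpha,\obeta}$, using that $\tau\hookrightarrow G_{1}=\calh^{-1}_{\obeta}(G)$ and that $\calh^{-1}_{\obeta}(G)[1]$ is the canonical subobject of $G$ coming from the tilt, then pull back along the epimorphism $A\onto G$ to a subobject $P\hookrightarrow A$ sitting in $0\to F\to P\to\tau[1]\to0$. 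The inclusion $P\hookrightarrow A$ gives $\calh^{-1}(P)\hookrightarrow\calh^{-1}(A)=E$ (as $\calh^{-2}(A/P)=0$, by Lemma \ref{vert-lem-1} applied to $A/P$), realising $\calh^{-1}(P)$ as a subsheaf of $E$ that enlarges $\calh^{-1}(F)$ by a subsheaf of the two-dimensional torsion $\tau$. The decisive step, and the crux of the whole lemma, is to show that the $\mu_{\le2}$-semistability of $E$---equivalently, its $\nu_{\alpha,\obeta}$-semistability for $\alpha\gg0$ as in Lemma \ref{mu-sst}---forbids such an enlargement, forcing $\tau=0$. Everything else is a formal manipulation of the two exact sequences together with Lemma \ref{vert-lem-1}.
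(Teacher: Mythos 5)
Your handling of everything except the two delicate points coincides with the paper's proof: Lemma \ref{vert-lem-1} applied to $F$ and $G$, the identifications $\calh^{-2}(F)=F_{11}$, $\calh^{0}(F)=F_{00}$ (and likewise for $G$) via the truncation triangles, the two long exact sequences, the bound $\dim G_{00}\le 1$ from $G_{00}$ being a sheaf quotient of $T$, and torsion-freeness of $\calh^{-1}(F)$ from $\calh^{-1}(F)\into E$. The problems sit exactly at the two steps that carry the content. First, your proof of $G_{01}=\calh^{-1}(\calh^{0}_{\obeta}(G))=0$ is circular: taking standard cohomology of $0\to N\to T\to G_{0}\to0$ gives $0\to G_{01}\to N\to T\to G_{00}\to 0$, so the sheaf-level kernel of $N\to T$ is precisely $G_{01}$; hence ``$N$ is necessarily a subsheaf of $T$'' is not a consequence of $N$ being a sheaf, it is equivalent to the vanishing you are trying to prove. (A monomorphism in $\coh^{\obeta}(X)$ between sheaves need not be a sheaf monomorphism; its sheaf kernel is only constrained to lie in $\free{\obeta}$.) The conclusion is true, but it needs an argument you do not give --- for instance, $N\in\tors{\obeta}$ forces $\mu(N)>\obeta$ if $\rk N>0$, while $\rk N=\rk G_{01}$, $c_1(N)=c_1(G_{01})$ and $\mu(G_{01})\le\mu^{+}(G_{01})\le\obeta$ --- or the paper's own slope argument, which runs through $E$: the image of the composite $E\to\tilde{G}\onto G_{01}$ is a quotient of the $\mu$-semistable sheaf $E$, hence of slope $\ge\mu(A)>\obeta$, contradicting $\mu^{+}(G_{01})\le\obeta$ unless $G_{01}=0$.

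Second, and decisively, part (3) for $G$ --- which you yourself call the crux --- is never proved. Your reduction is fine as far as it goes: torsion of dimension $\le1$ in $\tilde{G}=G_{1}$ would be a subobject of $G_{1}\in\free{\alpha,\obeta}$ in $\coh^{\obeta}(X)$ of infinite tilt slope (this is also the paper's concluding step), and the pullback $0\to F\to P\to\tau[1]\to0$ does realise $\calh^{-1}(P)\subseteq E$ with $\calh^{-1}(P)/\tilde{F}$ a subsheaf of the two-dimensional torsion $\tau$. But the assertion that $\mu_{\le2}$-semistability of $E$ ``forbids such an enlargement'' cannot be the mechanism, because it is false as a general principle: any $\mu$-stable bundle $E$ contains chains such as $E(-2H)\subset E(-H)\subset E$, whose middle quotient $E(-H)|_{D}$, $D\in|H|$, is two-dimensional torsion, and no slope or $\ch_2$ inequality is violated by their existence; moreover the condition that $\calh^{-1}(P)$ be a subobject of $E\in\free{\alpha,\obeta}$ of non-positive tilt slope holds automatically for $\alpha\gg0$ once $\ch_0(\tilde{F})>0$. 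So no contradiction can be extracted from $E$ alone. The paper's proof gets the contradiction from the other end of the sequence \eqref{star2}: setting $K_{1}:=E/\tilde{F}=\ker(\tilde{G}\to F_{00})$ and $K_{2}:=\tilde{G}/K_{1}\into F_{00}$, the torsion of $\tilde{G}$ meets $K_{1}$ inside $T_{K}\subseteq\tilde{F}^{**}/\tilde{F}$, hence in dimension $\le1$, and injects modulo that into $K_{2}$; the paper then bounds the dimension of $K_{2}$ so as to conclude that all torsion of $\tilde{G}$ has dimension $\le1$, after which the infinite-slope contradiction with $\tilde{G}\in\free{\alpha,\obeta}$ finishes. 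That dimension count through $F_{00}$ and $T$ --- not the semistability of $E$ --- is the missing decisive idea, and without it your proposal does not establish the lemma.
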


In particular, it follows that if $\ch_0(F)=\ch_0(A)$, then $\ch_0(\tilde{G})=\ch_0(G)=0$ thus $\tilde{G}=0$.

\begin{proof}
Since $F,G\in\mathcal{A}^{\alpha,\overline{\beta}}$ for $\alpha\gg0$, Lemma \ref{vert-lem-1} implies (in the notation of \cite[Section 2.3]{JM19}) that $F_{11}=G_{11}=0$ and $\dim F_{00}\le2$; $G_{00}$ is a quotient of $A_{00}$, so $\dim G_{00}\le1$ by hypothesis.

We denote $E:=\calh^{-1}(A)=\tilde{A}$ and $T:=\calh^{0}(A)$. Applying $\calh^{*}$ to the sequence in display (\ref{star}), we obtain:
\begin{equation} \label{star2}
0 \to \tilde{F} \to E \to \tilde{G} \to F_{00} \to T \to G_{00} \to 0
\end{equation}
$\tilde{F}$ is clearly torsion free, since it is a subsheaf of $E$. 

Since $G_{01}\in\free{\overline{\beta}}$, we have that $\mu^+(G_{01})\le\overline{\beta}$. On the other hand, let $G'_{01}$ be the image of the composite morphism $E\to \tilde{G}\onto G_{01}$. Since $E$ is $\mu$-semistable, it follows that $\mu(G_{01}')\ge\mu(E)=\mu(A)$, hence $\mu^+(G_{01})\ge\mu(A)$. However, $\overline{\beta}<\mu(A)$ by assumption, so $G_{01}=0$. We also conclude that $\tilde{G}\simeq G_{1}\in\free{\alpha,\obeta}$.

Now let $K_1:=E/\tilde{F}$, and let $T_K$ be its torsion subsheaf; using the sequence $0\to\tilde{F}\to E\to K_1\to 0$ and the corresponding morphisms between the double dual sheaves, we conclude that $T_K$ is a subsheaf of $\tilde{F}^{**}/\tilde{F}$, thus $\dim T_K\le1$. In addition, $K_1$ is a subsheaf of $\tilde{G}$, so let $K_2:=\tilde{G}/K_1$, which is also in $\coh_{\le1}$ because it is a subsheaf of $F_{00}$. Therefore, the torsion subsheaf of $\tilde{G}$ would also have dimension at most one, contradicting the fact that $\tilde{G}\in\free{\alpha,\obeta}$. Thus both $K$ and $\tilde{G}$ must be torsion free sheaves.
\end{proof}

We are finally ready to complete the proof of Theorem \ref{vert-prop-1}.

\noindent\emph{Proof of Theorem \ref{vert-prop-1} for $\obeta<\mu(A)$.}
Set $E:=\calh^{-1}(A)$ and $T:=\calh^0(A)$. Lemma \ref{vert-lem-2} implies that $A$ satisfies condition (i) of Definition \ref{asym s-st}.

Let $F\into A$ be a sub-object within $\mathcal{A}^{\alpha,\obeta}$ for all $\alpha$ sufficiently large, and let $G:=A/F$ (quotient within $\mathcal{A}^{\alpha,\obeta}$). Lemma \ref{vert-lem-3} implies that $F_{11}=G_{11}=G_{01}=0$, $\dim F_{00}\le 2$  and $\dim G_{00}\le1$, so $\ch_i(G)=-\ch_i(\tilde{G})$ for $i=0,1$ and $\ch_0(F)=-\ch_0(\tilde{F})$ so that 
$$ \mu(F) = \mu(\tilde{F}) - \dfrac{\ch_1(F_{00})}{\ch_0(\tilde{F})} \le \mu(\tilde{F}). $$
Moreover, the cohomology sheaves of the objects $F$, $A$ and $G$ satisfy the sequence (\ref{star2}), thus, in particular, $0\le\ch_0(\tilde{F})\le\ch_0(E)$.

We start by considering the case $\ch_0(F)=\ch_0(\tilde{F})=0$, which is equivalent to assuming that $\tilde{F}=0$ since $\tilde{F}$ is torsion free. Indeed, this assumption implies that $F\simeq F_{00}$ and reduces the sequence (\ref{star2}) to 
\begin{equation}\label{star3}
0\to E \to \tilde{G} \to F_{00} \stackrel{u}{\to} T \to G_{00} \to 0. 
\end{equation}
If $u=0$, then $T \simeq G_{00}$ and $E^{**}\simeq\tilde{G}^{**}$, so that $F_{00}$ becomes a subsheaf of $E^{**}/E$, so $F_{00}$ must have pure dimension 1. The inequality in item (4) implies that
$$ \lim_{\alpha\to\infty} \left( \lambda_{\alpha,\overline{\beta},s}(F)-\lambda_{\alpha,\overline{\beta},s}(A) \right) =  \dfrac{\ch_3(F_{00})}{\ch_2(F_{00})} - \obeta - \dfrac{6s+1}{3}\left( \mu(A)-\obeta \right) < 0, $$
i.e. there exists $\alpha_1>0$ such that $\lambda_{\alpha,\overline{\beta},s}(F)<\lambda_{\alpha,\overline{\beta},s}(A)$ for every $\alpha>\alpha_1$, as desired.

Therefore, we can assume that $u:F_{00}\to T$ is a non-zero morphism that lifts to a monomorphism $F_{00}\to A$; the same conclusion will be reached by considering the inequality in item (5).

Assume now that $\ch_0(A)<\ch_0(F)<0$, or, equivalently, $0<\ch_0(\tilde{F})<\ch_0(E)$; since $E$ is $\mu$-stable, we conclude that
$$ \mu(F) \le \mu(\tilde{F})<\mu(E)=\mu(A). $$
It then follows that 
$$ \lim_{\alpha\to\infty} \left( \lambda_{\alpha,\overline{\beta},s}(F)-\lambda_{\alpha,\overline{\beta},s}(A) \right) = \dfrac{6s+1}{3}\left( \mu(F)-\mu(A) \right) < 0, $$

Finally, we consider the case $\ch_0(F)=\ch_0(A)$, so that $\ch_0(\tilde{G})=\ch_0(G)=0$, thus $\tilde{G}=0$ since it is torsion free by Lemma \ref{vert-lem-3}. It follows that $G\simeq G_{00}$ and, assuming that $\ch_2(G)\ne0$, we have
$$ \lim_{\alpha\to\infty} \left( \lambda_{\alpha,\overline{\beta},s}(A)-\lambda_{\alpha,\overline{\beta},s}(G) \right) =
\dfrac{6s+1}{3}\left( \mu(A)-\obeta \right) - \left( \dfrac{\ch_3(G_{00})}{\ch_2(G_{00})} -\obeta \right) < 0. $$
The last inequality follows from condition (2), since $G_{00}$ is a quotient of $A_{00}$.
\hfill\qed

\medskip

\noindent\emph{Proof of Theorem \ref{vert-prop-1} for $\obeta\ge\mu(A)$.}
Let $F\into A$ be a sub-object within $\mathcal{A}^{\alpha,\obeta}$, with quotient $G:=A/F$, for $\alpha>\alpha_0$. Lemma \ref{vert-lem-1} imply that $G_{11}=0$ and $\dim F_{00}\le 2$; we also have that $\dim G_{00}\le 1$ by hypothesis, since $G_{00}$ is a quotient of $A_{00}$. Moreover, $\ch_0(F)=-\ch_0(\tilde{F})$, and $\mu(F) \le \mu(\tilde{F})$.

%Since $\tilde{F}$ is torsion free
%Clearly, $F\in\tors{\alpha,\obeta}$, $\tilde{F}=\calh^{-1}(F)$ is torsion free, and $\dim \calh^{0}(F)\le 1$ by Lemma \ref{vert-lem-1}, so $\ch_k(F)=\ch_k(\calh^{-1}(F))$ or $k=0,1$. Lemma \ref{vert-lem-1} also implies that .

Since $\ch_0(A)\le\ch_0(F)\le0$, it is enough to consider three cases: $\ch_0(F)=0$, $\ch_0(A)<\ch_0(F)<0$ and $\ch_0(F)=\ch_0(A)$, just as in the proof when $\obeta<\mu(A)$. The main difference is that we cannot conclude, like in Lemma \ref{vert-lem-3}, that $\tilde{G}$ is torsion free; however, any torsion subsheaf $P\into \tilde{G}$ must actually be a subsheaf of $G_{10}=G_1\in\free{\alpha,\obeta}$, since $G_{01}$ is torsion free, thus $\dim P=2$.

If $\ch_0(F)=0$, then $\tilde{F}=0$, so $F=F_{00}$. If $\ch_1(F)=\ch_1(F_{00})>0$, then
$$ \lim_{\alpha\to\infty} \dfrac{1}{\alpha^2}\left( \lambda_{\alpha,\overline{\beta},s}(A)-\lambda_{\alpha,\overline{\beta},s}(F) \right) =
\dfrac{6s+1}{6}\dfrac{\ch_1(F)}{\ch_2(F)-\ch_1(F)\obeta} > 0, $$
because the denominator on the right hand side of the equality coincides with $\rho_F(\oalpha,\beta)$ and therefore must be positive. if follows that $\lambda_{\alpha,\overline{\beta},s}(F)<\lambda_{\alpha,\overline{\beta},s}(A)$ for $\alpha\gg0$, as desired.

If $\ch_1(F)=0$, then, by examining the induced exact sequence in $\coh$ like the one in display \eqref{star3}, implying that the torsion subsheaf of $\tilde{G}$ would be a subsheaf of $F_{00}$, which is impossible because the latter has dimension at most 1. Thus we conclude that $\tilde{G}$ must be torsion free in this case, and the proof proceeds as before.

The case $\ch_0(A)<\ch_0(F)<0$ goes exactly as in the proof when $\obeta<\mu(A)$, but the third possibility requires further details.

The equality $\ch_0(F)=\ch_0(A)$ implies that
$$ \ch_0(G)=-\ch_0(\tilde{G})=-\ch_0(G_{10})-\ch_0(G_{01})=0, $$
thus $G_{01}=0$ since it is a torsion free sheaf, and $\tilde{G}=G_{10}=G_1$, so $\tilde{G}[1]\in\mathcal{A}^{\alpha,\obeta}$. 

If $\ch_1(G)=-\ch_1(\tilde{G})<0$, then 
$$ \lim_{\alpha\to\infty} \dfrac{1}{\alpha^2}\left( \lambda_{\alpha,\overline{\beta},s}(A)-\lambda_{\alpha,\overline{\beta},s}(G) \right) =
\dfrac{6s+1}{6}\dfrac{\ch_1(G)}{\ch_2(G)-\ch_1(G)\obeta} < 0, $$
since the denominator coincides with $\rho_G(\alpha,\obeta)$ and it is positive by \cite[Proposition 2.3]{JM19}. We conclude that $\lambda_{\alpha,\overline{\beta},s}(A) < \lambda_{\alpha,\overline{\beta},s}(G)$ hence also $\lambda_{\alpha,\overline{\beta},s}(F) < \lambda_{\alpha,\overline{\beta},s}(A)$ for $\alpha\gg0$.

If $\ch_0(G)=\ch_1(G)=0$, then $\ch_0(\tilde{G})=\ch_1(\tilde{G})=0$ because $\dim G_{00}\le1$. If $\tilde{G}\ne0$, \cite[Proposition 2.3]{JM19} implies that $\rho_{\tilde{G}[1]}=-\ch_2(\tilde{G})\ge0$, thus $\ch_2(\tilde{G})=0$ too; in addition, it also follows that $\tau_{\tilde{G}[1]}=-\ch_3(\tilde{G})>0$, providing a contradiction. 

We conclude that $G=G_{00}$, so assuming that $\ch_2(G)\ne0$, we have
$$ \lim_{\alpha\to\infty} \left( \lambda_{\alpha,\overline{\beta},s}(A)-\lambda_{\alpha,\overline{\beta},s}(G) \right)  =
\dfrac{6s+1}{3}\left( \mu(A)-\obeta \right) - \left( \dfrac{\ch_3(G_{00})}{\ch_2(G_{00})} -\obeta \right)<0, $$
where the last inequality follows from condition (2), since $G_{00}$ is a quotient of $A_{00}$.

Finally, we assume that $\ch_i(G)=0$ for $i=0,1,2$, so $\ch_3(G)=\ch_3(G_{00})>0$. Since $\ch_i(F)=\ch_i(A)$ for $i=0,1,2$, we have that
$$ \lim_{\alpha\to\infty} \alpha^2  \left( \lambda_{\alpha,\overline{\beta},s}(F)-\lambda_{\alpha,\overline{\beta},s}(A) \right) = 
\dfrac{2(\ch_3(A)-\ch_3(F))}{\ch_0(A)} = \dfrac{2\ch_3(G)}{\ch_0(A)} <0 $$
since $\ch_0(A)<0$. \hfill\qed

%%%%%%%%%%%%%%%%%%%%%%%%%%%%%%%%%%%%%%%%%%%%%%%%%%%%%%%%%%%%%%%%%%%%%%%%%%%%

\subsection{Proof of Theorem \ref{vert-prop-2}}

Let us know look into the converse direction, describing the properties of asymptotically $\labs$-semistable objects along vertical lines. Applying Lemma \ref{vert-lem-1}, item (i) in Definition \ref{asym s-st} immediately implies that $A_{11}=0$ and $\dim A_{00}\le 2$. To see that, in fact, $\dim A_{00}\le 1$, just note that $A_{00}$ is a quotient of $A$ in $\mathcal{A}^{\alpha,\obeta}$ whenever $A\in\mathcal{A}^{\alpha,\obeta}$; if $\ch_1(A_{00})>0$, then
$$ \lim_{\alpha\to\infty} \dfrac{1}{\alpha^2}\left( \lambda_{\alpha,\overline{\beta},s}(A)-\lambda_{\alpha,\overline{\beta},s}(A_{00}) \right) =
\dfrac{6s+1}{6}\dfrac{\ch_1(A_{00})}{\ch_2(A_{00})-\ch_1(A_{00})\obeta} > 0, $$
contradicting the asymptotic $\labs$-semistability of $A$.

Any quotient sheaf $P$ of $A_{00}$ also belongs to $\tors{\alpha,\obeta}$ for any $\alpha$ and $\obeta$ and therefore is also a quotient of $A$ within $\mathcal{A}^{\alpha,\beta}$; thus if $\ch_2(P)\ne 0$, we obtain:
$$ \lim_{\alpha\to\infty} \left( \lambda_{\alpha,\overline{\beta},s}(P)-\lambda_{\alpha,\overline{\beta},s}(A) \right) =
\dfrac{\ch_3(P)}{\ch_2(P)} -\obeta -\dfrac{6s+1}{3}\left( \mu(A)-\obeta \right) \ge 0, $$
which yields the formula in item (2) of the proposition.

Next, note that $\ch_i(\tilde{A})=-\ch_i(A)$ for $i=0,1$; in particular, $\mu(\tilde{A})=\mu(A)$. If $\tilde{A}$ is not $\mu$-semistable, let $S$ be its maximal $\mu$-stable subsheaf. Using either Lemma \ref{vert-lem-2} or Lemma \ref{vert-lem-4}, we have that $S[1]\in\mathcal{A}^{\alpha,\obeta}$ when $\alpha$ is sufficiently large, thus the monomorphism $S\into\tilde{A}$ in $\coh(X)$ lifts to a monomorphism $S[1]\into A$ in $\mathcal{A}^{\alpha,\obeta}$. However, 
$$ \lim_{\alpha\to\infty} \left( \lambda_{\alpha,\overline{\beta},s}(S[1])-\lambda_{\alpha,\overline{\beta},s}(A) \right) = 
\dfrac{6s+1}{3}\left( \mu(S)-\mu(A) \right) > 0, $$
contradicting the asymptotic $\lambda$-semistability of $A$ along $\{\beta=\obeta\}$.

Given any sub-object $F\into A$ within $\mathcal{A}^{\alpha,\overline{\beta}}$ for $\alpha\gg0$ with $\mu(F)=\mu(A)$, note that
$$ \lim_{\alpha\to\infty} \alpha^2\left( \lambda_{\alpha,\overline{\beta},s}(F)-\lambda_{\alpha,\overline{\beta},s}(A) \right) = $$
$$ = \dfrac{4}{\ch_0(F)\ch_0(A)} \left(\left(s+\dfrac{1}{6}\right)(\mu(A)-\obeta)\delta_{20}^{\obeta}(F,A)-\dfrac{1}{2}\delta_{30}^{\obeta}(F,A)\right) \le 0; $$
since $\ch_0(F),\ch_0(A)<0$, we obtain the inequality in the third item.

Setting $Q_A:=\tilde{A}^{**}/\tilde{A}$, note that the standard short exact sequence
$$ 0 \to \tilde{A} \to \tilde{A}^{**} \to Q_A \to 0 $$
in $\coh(X)$ induces the following short exact sequence in $\mathcal{A}^{\alpha,\obeta}$, since both $\tilde{A}$ and $\tilde{A}^{**}$ are $\mu$-semistable sheaves with slope larger than $\obeta$:
$$ 0 \to Q_A \to \tilde{A}[1] \to \tilde{A}^{**}[1] \to 0 . $$
Therefore, any subsheaf $R\into Q_A$ (including $Q_A$ itself) can be regarded as a sub-object of $A$ within $\mathcal{A}^{\alpha,\obeta}$. Any 0-dimensional $R\into Q_A$ would destabilize $A$, thus $Q_A$ must have pure dimension 1. In addition,
$$ \lim_{\alpha\to\infty} \left( \lambda_{\alpha,\overline{\beta},s}(R)-\lambda_{\alpha,\overline{\beta},s}(A) \right) =
\dfrac{\ch_3(R)}{\ch_2(R)} -\obeta -\dfrac{6s+1}{3}\left( \mu(A)-\obeta \right) \le 0, $$
which yields the formula in item (4) of Theorem \ref{vert-prop-2}.

Finally, let $u:U\to A_{00}$ be a non-zero morphism which lifts to a monomorphism $\tilde{u}:U\to A$ in $\mathcal{A}^{\alpha,\obeta}$ for every $\alpha\gg0$; we can assume that $u_2>0$, for otherwise it would asymptotically $\lambda$-destabilize $A$. It follows that
\begin{equation} \label{U-->A}
\lim_{\alpha\to\infty} \left( \lambda_{\alpha,\overline{\beta},s}(U)-
\lambda_{\alpha,\overline{\beta},s}(A) \right) =
\dfrac{\ch_3(U)}{\ch_2(U)} -\obeta -\dfrac{6s+1}{3}\left( \mu(A)-\obeta \right) \le 0,
\end{equation}
as desired, completing the proof of Theorem \ref{vert-prop-2}. \hfill\qed

\begin{remark}
If $A\in\dbx$ is asymptotically $\labs$-semistable along $\{\beta=\obeta\}$, then $\calh^{-1}(\calh^0_{\obeta}(A))=0$ when $\obeta<\mu(A)$, while $\calh^{0}(\calh^{-1}_{\obeta}(A))=0$ when $\obeta>\mu(A)$.

Indeed, assume first $\obeta<\mu(A)$. Since $A_0$ is a quotient of $A$ within $\mathcal{A}^{\alpha,\overline{\beta}}$, we obtain, if $\ch_0(A_0)=\ch_0(A_{01})\ne0$ (the first equality comes from he fact that $\dim A_{00}\le1$) 
$$ \lim_{\alpha\to\infty} \left( \lambda_{\alpha,\overline{\beta},s}(A_0)-\lambda_{\alpha,\overline{\beta},s}(A) \right) =
\dfrac{6s+1}{3}\left( \mu(A_{0})-\mu(A) \right) \ge 0, $$
thus $\mu(A_{01})=\mu(A_{0})\ge\mu(A)>\obeta$, hence $\ch_1^{\overline{\beta}}(A_{01})>0$. However, $\ch_1^{\overline{\beta}}(A_0)=-\ch_1^{\overline{\beta}}(A_{01})\ge0$ by \cite[Proposition 2.1]{JM19}, since $A_0\in\coh^{\obeta}(X)$; it follows that $\ch_0(A_{01})=0$, hence $A_{01}=0$ since this is a torsion free sheaf.

Next, assume that $\obeta>\mu(A)$. Since $A_{11}=0$, then $A_1=A_{10}$, hence $A_{10}[1]$ is a sub-object of $A$ within $\mathcal{A}^{\alpha,\overline{\beta}}$. If $\ch_0(A_{10})\ne0$, then 
$$ \lim_{\alpha\to\infty} \left( \lambda_{\alpha,\obeta,s}(A_{10})-\lambda_{\alpha,\obeta,s}(A) \right) =
\dfrac{6s+1}{3}\left( \mu(A_{10})-\mu(A) \right) \le 0, $$
since $A$ is $\lambda_{\alpha,\obeta,s}$-semistable for every $\alpha>\alpha_0$, thus $\mu(A_{10})\le\mu(A)<\obeta$. It follows that $\ch_1^{\obeta}(A_{10})<0$, contradicting the fact that $A_{10}\in\mathcal{T}_{\obeta}\subset\coh^{\obeta}(X)$.

If $\ch_0(A_{10})=0$, then $\ch_1(A_{10})=\ch_1^{\obeta}(A_{10})\ge0$; assume that $\ch_1(A_{10})\ne0$. In addition, $\ch_2^{\obeta}(A_{10}) = -\rho_{A_{10}[1]}(\alpha,\obeta)\le0$, since $A_{10}[1]\in\mathcal{A}^{\alpha,\obeta}$. It then follows that
$$ \lim_{\alpha\to\infty} \dfrac{1}{\alpha^2} \left( \lambda_{\alpha,\obeta,s}(A_{10})-\lambda_{\alpha,\obeta,s}(A) \right) = -\dfrac{6s+1}{6}  \dfrac{\ch_1(A_{10})}{\ch_2^{\obeta}(A_{10})} > 0, $$
again contradicting the asymptotic $\labs$-stability of $A$.

We therefore conclude that $\ch_0(A_{10})=\ch_1(A_{10})=0$, thus $\nu_{\alpha,\obeta}(A_{10})=+\infty$. However, $A_{10}\in\mathcal{F}_{\alpha,\obeta}$, hence it follows that $A_{10}=0$; furthermore, $A=A_0\in\mathcal{T}_{\alpha,\obeta}$ for $\alpha\gg0$. \hfill\qed
\end{remark}

%--------------------------------------------------------------

\subsection{Precise characterization of asymptotically (semi)stable objects}

The difficulty in providing a precise characterization of asymptotically $\labs$-stable objects along vertical lines is the existence of actual $\lambda$-walls that are asymptotic to a vertical line, as described in \cite[Lemma 4.8]{JM19}; indeed, note that the expressions on the left hand side of the inequalities in Theorems \ref{vert-prop-1} and \ref{vert-prop-2} precisely correspond to the values of $\obeta$ defined in \cite[Lemma 4.8]{JM19}. Below we describe two situations in which a precise characterization can be obtained.

\begin{theorem}\label{vert-thm-1}
Let $v$ be a numerical Chern character satisfying $v_0\ne0$ and the Bogomolov inequality $v_1^2-2v_0v_2\geq 0$; assume that $\gcd(v_0,v_1)=1$, and %fix $\obeta<\mu(v)$ with
$\obeta\notin\mathbb{Q}$. An object $A\in\dbx$ with $\ch(A)=v$ is asymptotically $\labs$-stable along the vertical line $\{\beta=\obeta\}$ if and only if the conditions of Theorem \ref{vert-prop-1} are satisfied.
\end{theorem}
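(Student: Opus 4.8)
The theorem asserts an equivalence, so one direction is already free: Theorem \ref{vert-prop-1} establishes that the listed conditions are sufficient for asymptotic $\labs$-stability, so I need only prove the converse under the extra hypotheses $\gcd(v_0,v_1)=1$ and $\obeta\notin\mathbb{Q}$. The plan is to start from the necessary conditions furnished by Theorem \ref{vert-prop-2} and upgrade each weak (non-strict) inequality or semistability statement to the corresponding strict (stable) statement appearing in Theorem \ref{vert-prop-1}, using the arithmetic hypotheses to rule out the borderline cases. Concretely, Theorem \ref{vert-prop-2} already gives conditions (1), (2), (4), (5) with ``$\le$'' in place of ``$<$'', and gives $\mu$-\emph{semi}stability of $\tilde A=\calh^{-1}(A)$ where condition (3) of Theorem \ref{vert-prop-1} demands $\mu$-\emph{stability}. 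So there are two kinds of gaps to close: the strictness of the numerical inequalities in (2), (4), (5), and the upgrade from $\mu$-semistability to $\mu$-stability in (3).

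\textbf{Closing the $\mu$-stability gap.} For condition (3), suppose $\tilde A$ were strictly $\mu$-semistable, so it has a saturated subsheaf $S\into\tilde A$ with $\mu(S)=\mu(\tilde A)=\mu(A)=v_1/v_0$ and $0<\ch_0(S)<\ch_0(\tilde A)$. This is where the coprimality hypothesis enters: if $\gcd(v_0,v_1)=1$, then $\mu(A)=v_1/v_0$ is a fraction in lowest terms with denominator $|v_0|$, and any proper subsheaf $S$ of the same slope would need $\ch_1(S)/\ch_0(S)=v_1/v_0$ with $0<\ch_0(S)<|v_0|$, which is arithmetically impossible. Hence no such destabilizing subsheaf exists and $\tilde A$ is automatically $\mu$-stable. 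I expect this step to be short once the statement is set up correctly, the only care being to track signs (recall $\ch_0(\tilde A)=-\ch_0(A)$).

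\textbf{Closing the strictness gaps via irrationality of $\obeta$.} The inequalities in (2), (4), (5) of Theorem \ref{vert-prop-2} are of the shape $\ch_3(\cdot)/\ch_2(\cdot)\ \square\ \tfrac{6s+1}{3}(\mu(A)-\obeta)+\obeta$ with $\square$ being $\ge$, $\le$, $\le$ respectively, and the corresponding conditions in Theorem \ref{vert-prop-1} have strict inequalities. Equality can occur only when $\ch_3(P)/\ch_2(P)-\obeta=\tfrac{6s+1}{3}(\mu(A)-\obeta)$, i.e. exactly when the object in question sits \emph{on} the asymptotic vertical wall described in \cite[Lemma 4.8]{JM19}. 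The strategy is to argue that, in the case of equality, one produces a genuine destabilizing sub- or quotient-object whose existence contradicts asymptotic \emph{stability} (the strict version in the definition), not merely asymptotic semistability; that is, an equality forces a sub-object $F$ with $\lim_{\alpha\to\infty}(\lambda_{\alpha,\obeta,s}(F)-\lambda_{\alpha,\obeta,s}(A))=0$, and asymptotic stability (condition (ii) with strict ``$<$'') demands this limit be achieved strictly from below, which it is not. The hypothesis $\obeta\notin\mathbb{Q}$ should be used to exclude degenerate coincidences among the rational quantities $\ch_i$ and $\mu(A)=v_1/v_0$: since $\ch_2(\cdot),\ch_3(\cdot)$ and $\mu(A)$ are rational while $\obeta$ is irrational, the equality defining the wall can be rearranged to express $\obeta$ as a rational function of Chern numbers, forcing $\obeta\in\mathbb{Q}$ unless the coefficient of $\obeta$ vanishes; analysing that vanishing condition should pin down exactly when equality is possible and show it always yields a strict destabilizer.

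\textbf{Main obstacle.} The delicate point, and the step I expect to be hardest, is the strictness argument for conditions (2), (4), (5): I must show that an object realizing \emph{equality} in one of these bounds is genuinely destabilizing in the strict asymptotic sense, rather than being a strictly-semistable factor that is allowed. This requires a second-order analysis of the difference $\lambda_{\alpha,\obeta,s}(F)-\lambda_{\alpha,\obeta,s}(A)$ as $\alpha\to\infty$ — the leading term vanishes at equality, so one must examine the next term in the $\alpha$-expansion (the $O(1)$ or $O(1/\alpha^2)$ coefficient) and determine its sign, analogously to the subleading computations already carried out in the proof of Theorem \ref{vert-prop-1}. The irrationality of $\obeta$ is precisely what should guarantee this subleading term has a definite sign, because it prevents the rational Chern-number data from conspiring to make it vanish as well. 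I would organise the proof by assuming $A$ is asymptotically $\labs$-stable, invoking Theorem \ref{vert-prop-2} for the weak inequalities, then treating the two upgrades in turn, deferring the subleading-term sign computation to a short lemma.
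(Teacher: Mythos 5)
Your proposal takes the same route as the paper's proof: the \emph{if} direction is quoted from Theorem \ref{vert-prop-1}, and the converse is obtained by matching the necessary conditions of Theorem \ref{vert-prop-2} against the sufficient conditions of Theorem \ref{vert-prop-1}, with coprimality upgrading $\mu$-semistability to $\mu$-stability in item (3) and irrationality of $\obeta$ upgrading the weak inequalities in items (2), (4), (5) to strict ones. Your coprimality argument is exactly the paper's (stated there in one clause: $\mu$-stability and $\mu$-semistability coincide when $\gcd(v_0,v_1)=1$).

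Where you misjudge the situation is the strictness step. You already wrote down the decisive observation — rearranging a putative equality $\ch_3(P)/\ch_2(P)=\tfrac{6s+1}{3}\bigl(\mu(A)-\obeta\bigr)+\obeta$ expresses $\obeta$ in terms of rational Chern data — but you then treat it as inconclusive and declare the ``main obstacle'' to be a second-order analysis of $\lambda_{\alpha,\obeta,s}(F)-\lambda_{\alpha,\obeta,s}(A)$ on the equality locus, deferred to an unproven lemma. No such analysis is needed, and the paper performs none: writing the wall value as $\tfrac{6s+1}{3}\mu(A)+\tfrac{2-6s}{3}\obeta$, the coefficient $\tfrac{2-6s}{3}$ of $\obeta$ is a nonzero rational (for rational $s\ne1/3$), while $\ch_3(P)/\ch_2(P)$ and $\mu(A)=v_1/v_0$ are rational, so equality would force $\obeta\in\mathbb{Q}$, a contradiction. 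The equality cases in (2), (4), (5) therefore simply never occur, the weak inequalities of Theorem \ref{vert-prop-2} are automatically the strict inequalities of Theorem \ref{vert-prop-1}, and the whole converse is a two-line observation. Your residual worry about ``the coefficient of $\obeta$ vanishing'' is realized only at $s=1/3$, where the wall value degenerates to the rational number $\mu(A)$; the paper silently passes over this borderline case, and its remark following the theorem (that one may instead assume $s\notin\mathbb{Q}$ when $\obeta\in\mathbb{Q}$) confirms the argument is purely arithmetic — precisely the mechanism you had in hand but did not push to its conclusion.
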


Alternatively, the conclusion of Theorem \ref{vert-thm-1} also holds if one assumes $s\notin\mathbb{Q}$ when $\obeta\in\mathbb{Q}$.

\begin{proof}
The \emph{if} part is just Theorem \ref{vert-prop-1}.

To prove the converse statement, just note that the five conditions in Theorem \ref{vert-prop-2} coincide with the ones in Theorem \ref{vert-prop-1} under the stated hypothesis. Indeed, the equality in items (2), (4) and (5) cannot occur when $\obeta\notin\mathbb{Q}$; as for item (3), recall that $\mu$-stability and $\mu$-semistability are equivalent when $\gcd(v_0,v_1)=1$.
\end{proof}

If $\mu(v)\in\Z$, then, after twisting by a suitable line bundle, we can assume that $\mu(v)=0$, and the Bogomolov inequality reduces to $v_2>0$ when $v_0<0$. We conclude this section by studying this case in detail.

\begin{lemma} \label{ineq's}
Let $v$ be a numerical Chern character satisfying $v_0<0$, $v_1=0$ and $v_2>0$. Given any $(\alpha,\beta)\in\HH$, let $A\in\cohab$ be an object with $\ch(A)=v$. If $\calh^{-2}(A)=0$, $\calh^{-1}(A)$ is $\mu$-semistable, and $\calh^{0}(A)\in\coh(X)_{1}$, then
\begin{enumerate}
\item $\ch_2(\calh^{0}(A))\le\ch_2(A)$;
\item $\ch_2(\calh^{-1}(A)^{**}/\calh^{-1}(A))\le\ch_2(A)$;
\item if there is a monomorphism $U\into A$ in $\cohab$ for some $U\in\coh(X)_{1}$, then $u_2\le 2\ch_2(A)$.
\end{enumerate}
\end{lemma}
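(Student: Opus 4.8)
The plan is to put $E:=\calh^{-1}(A)$ and $T:=\calh^0(A)$, so the hypotheses give a triangle $E[1]\to A\to T$ in which $T\in\coh(X)_1$ and $E$ is a $\mu$-semistable (hence torsion-free) sheaf. Since $v_1=0$ and $T$ has rank $0$ with $\ch_1(T)=0$, one reads off $\ch_0(E)=-v_0>0$ and $\ch_1(E)=0$, so $\mu(E)=0$; moreover $\ch_2(T)\ge0$ because $T$ is supported in dimension at most $1$, and $\ch(A)=\ch(T)-\ch(E)$. The whole lemma rests on the classical Bogomolov inequality, in the form: a $\mu$-semistable torsion-free sheaf with $\ch_1=0$ and $\ch_0>0$ has $\ch_2\le0$.

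For item (1) I would apply Bogomolov to $E$ to get $\ch_2(E)\le0$, whence $\ch_2(\calh^0(A))=\ch_2(T)=\ch_2(A)+\ch_2(E)\le\ch_2(A)$. For item (2), the reflexive hull $E^{**}$ is again $\mu$-semistable with the same rank and $\ch_1(E^{**})=0$, so Bogomolov gives $\ch_2(E^{**})\le0$; since $Q:=E^{**}/E$ is supported in codimension $\ge2$, we have $\ch_2(Q)=\ch_2(E^{**})-\ch_2(E)\le-\ch_2(E)\le\ch_2(T)-\ch_2(E)=\ch_2(A)$, using $\ch_2(T)\ge0$.

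For item (3) I would form the quotient $G:=A/U$ in $\cohab$ and apply $\calh^\bullet$ to $0\to U\to A\to G\to0$. As $U$ is a sheaf and $\calh^{-2}(A)=0$, this yields $\calh^{-2}(G)=0$ together with a five-term exact sequence $0\to E\to\tilde G\to U\to T\to\calh^0(G)\to0$, where $\tilde G:=\calh^{-1}(G)$. Writing $N:=\im(\tilde G\to U)=\tilde G/E$ and $M:=\im(U\to T)\subseteq T$, we get $\ch_2(U)=\ch_2(N)+\ch_2(M)$, and item (1) bounds $\ch_2(M)\le\ch_2(T)\le\ch_2(A)$. To bound $\ch_2(N)$, observe that if $\tilde G$ is torsion-free then $\tilde G^{**}=E^{**}$ (the cokernel $N$ having dimension at most $1$), so $N=\tilde G/E$ embeds into $E^{**}/E=Q$; item (2) then gives $\ch_2(N)\le\ch_2(Q)\le\ch_2(A)$, and combining yields $u_2=\ch_2(U)\le2\ch_2(A)$.

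The main obstacle is exactly the torsion-freeness of $\tilde G=\calh^{-1}(G)$: at a fixed $(\alpha,\beta)$ I cannot invoke Lemma \ref{vert-lem-3}, whose proof uses the large-volume regime. The plan is to extract it from the double-tilt structure of $\cohab$. Writing $G_{-1}:=\calh^{-1}_\beta(G)\in\free{\alpha,\beta}$ for the first-tilt cohomology of $G$ inside $\coh^{\beta}(X)$, the vanishing $\calh^{-2}(G)=0$ forces $G_{-1}$ to be an honest sheaf lying in $\tors{\beta}\cap\free{\alpha,\beta}$, and there is an exact sequence $0\to G_{-1}\to\tilde G\to K\to0$ with $K\in\free{\beta}$ torsion-free. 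Any nonzero torsion subsheaf of $\tilde G$ has dimension at most $1$ (it injects into $N\subseteq U$) and, since $K$ is torsion-free, must lie inside $G_{-1}$; but a torsion sheaf $\tau$ of dimension at most $1$ satisfies $\ch_1^{\beta}(\tau)=0$, hence $\nuab(\tau)=+\infty>0$, so it cannot be a sub-object in $\coh^{\beta}(X)$ of an object of $\free{\alpha,\beta}$. Therefore $\tilde G$ is torsion-free and the argument above closes.
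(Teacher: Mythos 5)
Your proof is correct and follows essentially the same route as the paper: items (1) and (2) are the identical Bogomolov-inequality computations, and item (3) uses the same cohomology sequence $0 \to \tilde{A} \to \tilde{G} \to U \stackrel{u}{\to} \calh^0(A) \to \calh^0(G) \to 0$ together with the splitting $u_2=\ch_2(\ker u)+\ch_2(\im u)$, bounding $\im u$ inside $\calh^0(A)$ and $\ker u$ inside $\tilde{A}^{**}/\tilde{A}$. The one place you go beyond the paper is the torsion-freeness of $\tilde{G}$: the paper simply asserts that $\ker u$ is a subsheaf of $\tilde{A}^{**}/\tilde{A}$, which implicitly requires this fact, whereas you prove it at a fixed $(\alpha,\beta)$ (where the large-volume Lemma \ref{vert-lem-3} is not available) by noting that the first-tilt cohomology $\calh^{-1}_{\beta}(G)$ is a sheaf in $\tors{\beta}\cap\mathcal{F}_{\alpha,\beta}$, that any torsion subsheaf of $\tilde{G}$ has dimension at most $1$ and must land in it, and that such a subsheaf would have $\nu_{\alpha,\beta}=+\infty$, contradicting membership in $\mathcal{F}_{\alpha,\beta}$. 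This is a legitimate and welcome tightening of a step the paper leaves implicit.
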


\begin{proof}
For the first inequality, note that $\ch_2(A_{00})=\ch_2(A)+\ch_2(\tilde{A})\le\ch_2(A)$ because $\tilde{A}$ is $\mu$-semistable (Bogomolov inequality).

Next, note that
$$ \ch_2(\tilde{A}^{**}/\tilde{A}) = \ch_2(\tilde{A}^{**}) - \ch_2(\tilde{A}) \le - \ch_2(\tilde{A}) = \ch_2(A) - \ch_2(A_{00}) \le \ch_2(A), $$
where the first inequality follows from the $\mu$-semistability of $\tilde{A}^{**}$.

Finally, letting $G:=A/U$ and taking cohomology in $\coh(X)$, we obtain the following exact sequence:
$$ 0 \to \tilde{A} \to \tilde{G} \to U \stackrel{u}{\to} A_{00} \to G_{00} \to 0. $$
Note that $\ker u$ is a subsheaf of $\tilde{A}^{**}/\tilde{A}$, while $\im u$ is a subsheaf of $A_{00}$. Since $u_2=\ch_2(\ker u) + \ch_2(\im u)$, the third inequality follows easily from the first two.
\end{proof}

\begin{theorem}\label{vert-thm-2}
Let $v$ be a numerical Chern character satisfying $v_0<0$, $v_1=0$ and $v_2>0$. For each $s<1/3$ there is $\epsilon>0$ (depending on $s$ and $v$) such that an object $A\in\dbx$ with $\ch(A)=v$ is asymptotically $\labs$-semistable along the vertical line $\{\beta=\obeta\}$ for $-\epsilon<\obeta<0$ if and only if the following conditions hold:
\begin{enumerate}
\item $\calh^p(A)=0$ for $p\ne-1,0$;
\item $\dim\calh^{0}(A)\le1$, and every sheaf quotient $\calh^{0}(A)\onto P$ (including $\calh^{0}(A)$ itself) satisfies $\ch_3(P)\ge0$ whenever $\ch_2(P)\ge0$;
\item $\tilde{A}=\calh^{-1}(A)$ is $\mu$-semistable, and every sub-object $F\into A$ in $\mathcal{A}^{\alpha,\obeta}$ for $\alpha\gg0$ and with $\mu(F)=0$ and $\ch_0(A)<\ch_0(F)<0$ satisfies $\delta_{30}(F,A)\geq0$, and if $\delta_{30}(F,A)=0$, then $\delta_{20}(F,A)\geq0$
\item $\tilde{A}^{**}/\tilde{A}$ has pure dimension 1, and every subsheaf $R\into\tilde{A}^{**}/\tilde{A}$ (including $\tilde{A}^{**}/\tilde{A}$ itself) satisfies $\ch_3(R)<0$;
\item if $U$ is a sheaf of dimension at most 1 and $u:U\to\calh^{0}(A)$ is a non-zero morphism that lifts to a monomorphism $\tilde{u}:U\into A$ in $\mathcal{A}^{\alpha,\obeta}$ for every $\alpha\gg0$, then $u_2>0$ and $u_3<0$.
\end{enumerate}
\end{theorem}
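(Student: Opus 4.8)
The plan is to derive Theorem \ref{vert-thm-2} from the general characterisations of Theorem \ref{vert-prop-2} (necessity, valid for all $s>0$) and Theorem \ref{vert-prop-1} (sufficiency), specialised to $v=(v_0,0,v_2,v_3)$. Since $v_1=0$ we have $\mu(A)=0$, and as $-\epsilon<\obeta<0$ we work throughout in the regime $\obeta<\mu(A)$, so that $A\in\mathcal{A}^{\alpha,\obeta}$ for $\alpha\gg0$ follows from Lemma \ref{vert-lem-2} once conditions (1)--(3) hold. The common right-hand side of the inequalities in those theorems becomes
$$ \theta:=\tfrac{6s+1}{3}(\mu(A)-\obeta)+\obeta=\tfrac{2(1-3s)}{3}\,\obeta, $$
which is \emph{strictly negative} (as $s<1/3$ and $\obeta<0$) and tends to $0^-$ as $\obeta\to0^-$. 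The governing principle is that the numbers $\ch_3(P)$, $\ch_3(R)$, $u_3$, $\delta_{30}(F,A)$ lie in a fixed discrete set whose nonzero elements are bounded away from $0$ by a gap $\eta=\eta(v)>0$; choosing $\epsilon$ so small that $|\theta|$ times the relevant (uniformly bounded) values of $\ch_2$ stays below $\eta$ makes the strict and non-strict threshold inequalities collapse onto the clean conditions of the statement.

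For \emph{necessity} I would invoke Theorem \ref{vert-prop-2}. Condition (1) follows from \ref{vert-prop-2}(1) together with $A\in\mathcal{A}^{\alpha,\obeta}$. For (2), a quotient $\calh^0(A)\onto P$ with $\ch_2(P)\neq0$ has $0<\ch_2(P)\le v_2$ by Lemma \ref{ineq's}(1), so $\ch_3(P)\ge\theta\,\ch_2(P)>-\eta$ forces $\ch_3(P)\ge0$ (the case $\ch_2(P)=0$ being trivial); conditions (4) and (5) are identical, using Lemma \ref{ineq's}(2),(3) to bound $\ch_2(R)\le v_2$ and $u_2\le2v_2$, with the sign $u_2>0$ forced exactly as in the proof of \ref{vert-prop-2}. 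For (3) I would rewrite \ref{vert-prop-2}(3) using $\mu(A)=0$ and the identities $\delta_{20}^{\obeta}(F,A)=\delta_{20}(F,A)$, $\delta_{30}^{\obeta}(F,A)=\delta_{30}(F,A)-\obeta\,\delta_{20}(F,A)$ (valid since $\ch_1(F)=\ch_1(A)=0$), obtaining $\obeta(\tfrac13-s)\delta_{20}(F,A)-\tfrac12\delta_{30}(F,A)\le0$. Here one only needs the \emph{easy} upper bound $\delta_{20}(F,A)\le -v_0v_2$, which holds because $\ch_2(F)=\ch_2(\calh^0(F))-\ch_2(\calh^{-1}(F))\ge0$ (the first term nonnegative, the second $\le0$ by Bogomolov for the $\mu$-semistable slope-$0$ sheaf $\calh^{-1}(F)$); discreteness of $\delta_{30}(F,A)$ then yields $\delta_{30}(F,A)\ge0$, with the tie-break $\delta_{20}(F,A)\ge0$ when $\delta_{30}(F,A)=0$.

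For \emph{sufficiency} I would avoid quoting Theorem \ref{vert-prop-1} (whose proof assumes $s\ge1/3$ and the wall-finiteness of Lemma \ref{vert_finite}) and instead verify Definition \ref{asym s-st} directly: for a fixed subobject $F\into A$ persisting for all $\alpha\gg0$, the function $\lambda_{\alpha,\obeta,s}(F)-\lambda_{\alpha,\obeta,s}(A)$ is a ratio of functions linear in $\alpha^2$, hence has an eventual sign read off from its asymptotic expansion, with no need to control bounded walls when $s<1/3$. I would then run the case analysis of the proof of \ref{vert-prop-1} on $\ch_0(F)\in[\ch_0(A),0]$, using Lemma \ref{vert-lem-3} for the cohomological shape of $F$ and $G=A/F$. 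The case $\ch_0(F)=0$ is closed by conditions (4)--(5) and the case $\ch_0(F)=\ch_0(A)$ by condition (2), both via the gap $\eta$ as above. The one genuinely new case is $\ch_0(A)<\ch_0(F)<0$ with $\mu(\calh^{-1}(F))=0$, now possible since $\calh^{-1}(A)$ is only $\mu$-\emph{semi}stable: the leading term vanishes and the eventual sign is that of $\obeta(\tfrac13-s)\delta_{20}(F,A)-\tfrac12\delta_{30}(F,A)$, which condition (3) makes $\le0$ (equality only when $\ch(F)$ is proportional to $\ch(A)$, where $\lambda_{\alpha,\obeta,s}(F)\equiv\lambda_{\alpha,\obeta,s}(A)$).

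The main obstacle is that in this last case condition (3) controls the sign only if $\delta_{20}(F,A)$ is bounded \emph{below}, equivalently $\ch_2(F)$ bounded \emph{above}, uniformly in $v$. Unlike the easy upper bound on $\delta_{20}$ used for necessity, this lower bound is delicate, because $\calh^{-1}(A/F)$ may acquire a $\tors{\obeta}$ part and hence large $\ch_2$. I would supply it through the support property: were such an $F$ to destabilise $A$, one may pass to a wall where the destabilising pair is $\sigma_{\alpha,\obeta,s}$-semistable, so that Lemma \ref{qform} gives $Q_{\alpha,\obeta,K}(F)\le Q_{\alpha,\obeta,K}(A)$; comparing the leading $\alpha^2$-coefficients yields $\ch_0(F)\ch_2(F)\ge\ch_0(A)\ch_2(A)$, hence $\ch_2(F)\le \tfrac{v_0v_2}{\ch_0(F)}$, a bound depending only on $v$. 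With $\delta_{20}(F,A)$ pinned into a fixed interval, the choice of $\eta$ and $\epsilon$ can be made uniform over all destabilising subobjects, and condition (3) then precludes destabilisation. Establishing this support-property bound uniformly as $\obeta$ ranges over $(-\epsilon,0)$ is the crux of the argument.
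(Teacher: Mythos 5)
Your necessity direction is essentially the paper's argument: specialise Theorem \ref{vert-prop-2}, observe that the common threshold $\tfrac{2-6s}{3}\obeta$ is strictly negative and tends to $0^-$, and use discreteness of the Chern data together with the bounds of Lemma \ref{ineq's} to collapse the threshold inequalities onto the clean conditions (2)--(5). Your replacement of the paper's boundedness argument by the elementary estimate $\delta_{20}(F,A)\le -v_0v_2$ (from $\ch_2(\calh^0(F))\ge0$ and Bogomolov for the equal-slope subsheaf $\calh^{-1}(F)\into\tilde A$) is legitimate there, since for necessity only an \emph{upper} bound on $\delta_{20}$ is needed.

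The genuine gap is in the sufficiency direction, exactly where you located it, and your proposed repair does not work. You need $\delta_{20}(F,A)$ bounded \emph{below} (equivalently $\ch_2(F)$ bounded above), uniformly over all sub-objects $F\into A$ with $\mu(F)=0$ and $\ch_0(A)<\ch_0(F)<0$, and you propose to extract this from Lemma \ref{qform}. But Lemma \ref{qform} applies to semistable objects of equal slope, or to an actual wall defined by an exact sequence; in the contradiction scenario ($F$ destabilises $A$ for all $\alpha\gg0$) there need be no point of the vertical line at which the slopes agree and the objects involved are semistable --- $A$ may be nowhere $\labs$-semistable along the line and $F$ need not be semistable either, so there is no wall to ``pass to''. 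Worse, even granting $Q_{\alpha_w,\obeta,K}(F)\le Q_{\alpha_w,\obeta,K}(A)$ at some point $\alpha_w$, this is an inequality between two quadratic polynomials in $\alpha^2$ evaluated at a \emph{single, bounded} value of $\alpha$; it does not imply the comparison of leading coefficients $-2K\ch_0(F)\ch_2(F)\le-2K\ch_0(A)\ch_2(A)$ that you want, which would require the inequality at arbitrarily large $\alpha$. You yourself flag this step as ``the crux'' and leave it unestablished, so the proof is incomplete at its decisive point.

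What you overlooked is that the paper's own machinery supplies the bound precisely because Theorem \ref{vert-thm-2} lives on the side $\obeta<0=\mu(A)$: Lemma \ref{vert-lem-3}(3), whose proof uses only $\mu$-semistability of $\tilde A$ and $\obeta<\mu(A)$, shows that $\calh^{-1}(A/F)$ is torsion free, so the scenario you worry about --- $\calh^{-1}(A/F)$ acquiring a $\tors{\obeta}$ part of large $\ch_2$ --- cannot occur; that phenomenon is genuine only for $\obeta\ge\mu(A)$. Hence $\tilde F=\calh^{-1}(F)$ is \emph{saturated} in $\tilde A$, so $\ch_2(\tilde F)\ge\ch_2(\tilde A)\ge-v_2$ (Bogomolov applied to the $\mu$-semistable torsion-free quotient $\tilde A/\tilde F$), while $0\le\ch_2(\calh^0(F))\le 2v_2$ as in Lemma \ref{ineq's}(3); together these pin $\ch_2(F)$, hence $\delta_{20}(F,A)$, into an interval depending only on $v$. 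This is exactly what the paper does (phrased as: boundedness of saturated slope-zero subsheaves of $\tilde A$ implies $\delta_{20}(F,A)$ takes finitely many values), and it is what makes the choice of $\epsilon$ uniform over all sub-objects, so that condition (3) precludes destabilisation. A minor further caveat, inherited from the paper rather than introduced by you: in the ``if'' direction, membership $A\in\mathcal{A}^{\alpha,\obeta}$ for $\alpha\gg0$ cannot simply be quoted from Lemma \ref{vert-lem-2}, since that lemma assumes $\calh^{-1}(A)$ is $\mu_{\le2}$-semistable while condition (3) only gives $\mu$-semistability; this point requires a separate argument in the strictly $\mu$-semistable case.
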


\begin{proof}
First, assume that $A\in\dbx$ is asymptotically $\labs$-semistable along the $\{\beta=\obeta\}$, so that item (1) follows immediately; the strategy is to reduce the three inequalities in Theorem \ref{vert-prop-2} to the ones above when $\obeta$ is taken suitably small.

The inequality in item (4) of Theorem \ref{vert-prop-2} reduces to
$$ \dfrac{\ch_3(R)}{\ch_2(R)} \le \dfrac{2-6s}{3}\obeta < 0, $$
since $s<1/3$ and $\obeta<0$. The same argument for the inequality in item (5) of Theorem \ref{vert-prop-2}, and it reduces to $u_3<0$.

Next, take 
$$ \dfrac{-3}{(2-6s)\ch_2(A)} < \obeta < 0. $$
If $A_{00}\onto P$ is a quotient sheaf with $\ch_2(P)>0$, then
$$ \dfrac{\ch_3(R)}{\ch_2(R)} \ge \dfrac{2-6s}{3}\obeta > \dfrac{-1}{\ch_2(A)} \ge \dfrac{-1}{\ch_2(R)}, $$
by the inequality in item (2) of Lemma \ref{ineq's}; Therefore, thus $\ch_3(P)>-1$, which is equivalent to $\ch_3(P)\ge0$.

The fact that $\tilde{A}$ is $\mu$-semistable is also guaranteed by Theorem \ref{vert-prop-2}. If $F\into A$ is a sub-object within $\mathcal{A}^{\alpha,\obeta}$ for $\alpha\gg0$ and $\mu(F)=0$, then the inequality in item (3) of Theorem \ref{vert-prop-2} reduces to
$$ \dfrac{3s-1}{3}\delta_{20}(F,A)\obeta + \dfrac{1}{2}\delta_{30}(F,A) \ge 0. $$
Note that if $\delta_{30}(F,A)=0$, then the previous inequality implies that $\delta_{20}(F,A)\ge0$.

Note that $\delta_{20}(F,A)$ can only assume finitely many values. Indeed, boundedness guarantees that there are only finitely many possibilities for $\ch(\tilde{F})$ among all subsheaves $\tilde{F}\into\tilde{A}$ with $\mu(\tilde{F})=\mu(F)=0$; in addition, an argument similar to the one used to prove item (3) of Lemma \ref{ineq's} can be used to show that $0\le\ch_2(F_{00})\le2\ch_2(A)$. This means that $\ch_2(F)$, hence $\delta_{20}(F,A)$, can only assume finitely many values, as desired. 

If $\delta_{20}(F,A)\le0$, then
$$ \delta_{30}(F,A) \ge \dfrac{2-6s}{3}\delta_{20}(F,A)\obeta \ge 0 . $$
Set
$$ \delta_{20}^{+}(F,A):=\min\{\delta_{20}(F,A) ~~|~~ F\into A ~~{\rm and}~~ \delta_{20}(F,A)>0 \}; $$
taking 
$$ \dfrac{3}{(6s-2)\delta_{20}^{+}}\le\obeta<0 $$
we conclude that 
$$ \delta_{30}(F,A) \ge \dfrac{2-6s}{3}\delta_{20}(F,A)\obeta \ge 
\dfrac{2-6s}{3}\delta_{20}^{+}(F,A)\obeta > -1 $$
thus again $\delta_{30}(F,A)\ge0$.

For the converse statement, the strategy is to compare with the inequalities provided in Theorem \ref{vert-prop-1}. First note that
$$ \ch_3(P)\ge0 ~\Rightarrow~ \dfrac{\ch_3(P)}{\ch_2(P)}\ge0>\dfrac{2-6s}{3}\obeta, $$ 
since $\obeta<0$ and $s<1/3$, providing the inequality in item (2) of Theorem \ref{vert-prop-1}.

Let $R\into\tilde{A}^{**}/\tilde{A}$ be a subsheaf; taking
$$ -\dfrac{3}{(2-6s)\ch_2(A)} < \obeta < 0 $$
provides us with the following implications:
$$ \ch_3(R)<0 ~\Leftrightarrow~ \ch_3(R)\le-1 ~\Leftrightarrow~ \dfrac{\ch_3(R)}{\ch_2(R)} \le \dfrac{-1}{\ch_2(R)} \le \dfrac{-1}{\ch_2(A)} < \dfrac{2-6s}{3}\obeta, $$
since $\ch_2(R)\le\ch_2(A)$ by the inequality in item (2) of Lemma \ref{ineq's}. This yields the inequality in item (4) of Theorem \ref{vert-prop-1}; the inequality in item (5) of Theorem \ref{vert-prop-1} is obtained in the same way, using the inequality in item (3) of Lemma \ref{ineq's}.

Following the proof of Theorem \ref{vert-prop-1}, it only remains for us to study sub-objects $F\into A$ within $\mathcal{A}^{\alpha,\obeta}$ when $\alpha>\alpha_0$ for some $\alpha_0>0$ and $\obeta$ is sufficiently close to 0 and $\ch_0(A)<\ch_0(F)<0$. Note that
$$ \lim_{\alpha\to\infty} \left( \lambda_{\alpha,\overline{\beta},s}(F)-\lambda_{\alpha,\overline{\beta},s}(A) \right) = \dfrac{6s+1}{3}\mu(\tilde{F}) \le 0, $$
since $\tilde{F}\into\tilde{A}$ is a subsheaf and $\tilde{A}$ is $\mu$-semistable. If $\mu(F)=\mu(\tilde{F})=0$, then
$$ \lim_{\alpha\to\infty} \alpha^2 \left( \lambda_{\alpha,\overline{\beta},s}(F)-\lambda_{\alpha,\overline{\beta},s}(A) \right) = $$
$$ \dfrac{-4}{\ch_0(F)\ch_0(A)} \left( \dfrac{3s-1}{3}\delta_{20}(F,A)\obeta + \dfrac{1}{2}\delta_{30}(F,A) \right).  $$
If $\delta_{30}(F,A)>0$, we can take $\obeta$ sufficiently close to 0 to make the sum inside the parenthesis positive, thus
$$ \lim_{\alpha\to\infty} \alpha^2 \left( \lambda_{\alpha,\overline{\beta},s}(F)-\lambda_{\alpha,\overline{\beta},s}(A) \right) < 0. $$
If $\delta_{30}(F,A)=0$, then $\delta_{20}(F,A)\ge0$ again makes the term inside the parenthesis positive when $\delta_{20}(F,A)>0$. Finally, if $\delta_{20}(F,A)=\delta_{30}(F,A)=0$, then $\lambda_{\alpha,\overline{\beta},s}(F)=\lambda_{\alpha,\obeta,s}(A)$ for every $\alpha\gg0$.
\end{proof}

The same argument can be used to prove a similar statement for the case $s>1/3$.

\begin{theorem}\label{vert-thm-3}
Let $v$ be a numerical Chern character satisfying $v_0<0$, $v_1=0$ and $v_2>0$. For each $s>1/3$ there is $\epsilon>0$ (depending on $s$ and $v$) such that an object $A\in\dbx$ with $\ch(A)=v$ is asymptotically $\labs$-semistable along the vertical line $\{\beta=\obeta\}$ for $-\epsilon<\obeta<0$ if and only if the following conditions hold:
\begin{enumerate}
\item $A_{11}=A_{01}=0$;
\item $\dim A_{00}\le1$, and every sheaf quotient $A_{00}\onto P$ (including $A_{00}$ itself) satisfies $\ch_3(P)\ge0$ whenever $\ch_2(P)>0$;
\item $\tilde{A}$ is $\mu$-semistable, and every sub-object $F\into A$ in $\mathcal{A}^{\alpha,\obeta}$ for $\alpha\gg0$ and with $\mu(F)=0$ and $\ch_0(A)<\ch_0(F)<0$ satisfies $\delta_{30}(F,A)\geq0$, and if $\delta_{30}(F,A)=0$, then $\delta_{20}(F,A)\leq0$
\item $\tilde{A}^{**}/\tilde{A}$ has pure dimension 1, and every subsheaf $R\into\tilde{A}^{**}/\tilde{A}$ (including $\tilde{A}^{**}/\tilde{A}$ itself) satisfies $\ch_3(R)\leq0$;
\item if $U$ is a sheaf of dimension at most 1 and $u:U\to A_{00}$ is a non-zero morphism that lifts to a monomorphism $\tilde{u}:U\into A$ in $\mathcal{A}^{\alpha,\obeta}$ for every $\alpha\gg0$, then $u_2>0$ and $u_3\leq0$.
\end{enumerate}
\end{theorem}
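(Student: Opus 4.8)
The plan is to mirror the structure of the proof of Theorem \ref{vert-thm-2}, observing that the only genuine point of departure between the $s<1/3$ and $s>1/3$ regimes is the sign of the coefficient $\tfrac{3s-1}{3}$ appearing in the reduced form of the inequality in item (3) of Theorem \ref{vert-prop-2}. Since $v_1=0$ and $\mu(v)=0$, all the displayed thresholds $\tfrac{6s+1}{3}(\mu(A)-\obeta)+\obeta$ collapse to $\tfrac{2-6s}{3}\obeta$, exactly as before; for $s>1/3$ and $\obeta<0$ this quantity is now \emph{positive}, which reverses the normalization in items (4) and (5). Concretely, the reduction $\tfrac{\ch_3(R)}{\ch_2(R)}\le\tfrac{2-6s}{3}\obeta$ together with $\ch_2(R)\le\ch_2(A)$ from Lemma \ref{ineq's}(2) now forces only $\ch_3(R)\le 0$ once $\obeta$ is taken with $0<-\obeta<\tfrac{3}{(6s-2)\ch_2(A)}$, and likewise $u_3\le 0$ for the morphisms $U\to A_{00}$ in item (5). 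I would state the threshold $\epsilon$ as the minimum of the finitely many bounds produced by these three inequalities, so that a single interval $-\epsilon<\obeta<0$ works uniformly.

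The forward direction then follows Theorem \ref{vert-prop-2} verbatim up to these sign changes: item (1) is Lemma \ref{vert-lem-1} (rephrased as $A_{11}=A_{01}=0$, the vanishing of $A_{01}$ coming from the Remark following the proof of Theorem \ref{vert-prop-2} applied with $\obeta<0=\mu(A)$), item (2) is the quotient inequality from Theorem \ref{vert-prop-2}(2), item (4) and item (5) are the reductions just described, and $\mu$-semistability of $\tilde A$ is inherited directly. For item (3) I would rewrite Theorem \ref{vert-prop-2}(3) in the form
$$ \dfrac{3s-1}{3}\,\delta_{20}(F,A)\,\obeta + \dfrac{1}{2}\,\delta_{30}(F,A) \ge 0, $$
and argue as in Theorem \ref{vert-thm-2} that $\delta_{20}(F,A)$ takes only finitely many values over subobjects $F\into A$ with $\mu(F)=0$ (again using the bound $0\le\ch_2(F_{00})\le 2\ch_2(A)$ from the argument behind Lemma \ref{ineq's}(3)). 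The sign flip means that here it is the case $\delta_{20}(F,A)\ge 0$ that forces $\delta_{30}(F,A)\ge\tfrac{6s-2}{3}\delta_{20}(F,A)\obeta\ge 0$ automatically, while the genuinely new constraint is extracted from the $\delta_{20}(F,A)<0$ subobjects, yielding $\delta_{30}(F,A)\ge 0$ after shrinking $\epsilon$ using $\delta_{20}^{-}(F,A):=\max\{\delta_{20}(F,A)\mid F\into A,\ \delta_{20}(F,A)<0\}$. Taking $\delta_{30}(F,A)=0$ then forces $\delta_{20}(F,A)\le 0$, which is precisely the tie-breaking condition stated in item (3).

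For the converse I would verify the hypotheses of Theorem \ref{vert-prop-1} from the five conditions, exactly as in Theorem \ref{vert-thm-2} but with the reversed signs. The conditions $\ch_3(R)\le 0$ and $u_3\le 0$ give $\tfrac{\ch_3(R)}{\ch_2(R)}\le 0<\tfrac{2-6s}{3}\obeta$ (now using $s>1/3$, $\obeta<0$), supplying items (4) and (5) of Theorem \ref{vert-prop-1}, and $\ch_3(P)\ge0$ gives item (2). Following the proof of Theorem \ref{vert-prop-1}, the only subobjects needing attention are those with $\ch_0(A)<\ch_0(F)<0$ and $\mu(\tilde F)=0$, where one computes
$$ \lim_{\alpha\to\infty}\alpha^2\bigl(\lambda_{\alpha,\obeta,s}(F)-\lambda_{\alpha,\obeta,s}(A)\bigr) = \dfrac{-4}{\ch_0(F)\ch_0(A)}\left(\dfrac{3s-1}{3}\delta_{20}(F,A)\obeta+\dfrac{1}{2}\delta_{30}(F,A)\right), $$
and checks that $\delta_{30}(F,A)>0$, or $\delta_{30}(F,A)=0$ with $\delta_{20}(F,A)\le 0$, makes the parenthesis positive for $\obeta$ close enough to $0$; the degenerate case $\delta_{20}=\delta_{30}=0$ gives equality of slopes and so is consistent with semistability. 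The main obstacle, as in Theorem \ref{vert-thm-2}, is the finiteness argument for the admissible values of $\delta_{20}(F,A)$: one must argue carefully that boundedness restricts $\ch(\tilde F)$ to finitely many possibilities among $\mu$-slope-zero subsheaves of $\tilde A$, and that $\ch_2(F_{00})$ is bounded, so that a single choice of $\epsilon$ controls all subobjects simultaneously. Since these estimates are identical to those already carried out for $s<1/3$, I would invoke them rather than repeat them, and simply remark that the proof is the same \emph{mutatis mutandis} with the sign of $3s-1$ reversed.
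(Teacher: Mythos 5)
This is correct and is exactly the paper's approach: the paper proves Theorem \ref{vert-thm-3} simply by asserting that ``the same argument'' as Theorem \ref{vert-thm-2} applies, and your proposal is precisely that argument with the sign of $3s-1$ reversed, including the correct identification of which case ($\delta_{20}(F,A)\ge0$) becomes automatic and which ($\delta_{20}(F,A)<0$) genuinely requires shrinking $\epsilon$ via the finiteness of admissible values of $\delta_{20}(F,A)$. Two trivial slips that do not affect the argument: the coefficient in your chain $\delta_{30}(F,A)\ge\tfrac{6s-2}{3}\delta_{20}(F,A)\obeta\ge0$ should be $\tfrac{2-6s}{3}$ (as in the paper) for the last inequality to hold when $\delta_{20}(F,A)\ge0$, and the binding bound on $\epsilon$ should come from the most negative admissible value of $\delta_{20}(F,A)$ (a minimum, not the maximum $\delta_{20}^{-}$ you define) --- though the paper's own proof of Theorem \ref{vert-thm-2} makes the mirror-image extremum choice there, so you have faithfully reproduced even that.
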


\begin{example}\label{example dual instanton}
Let $L$ be a line in $X=\p3$, with $\mathcal{O}_{L}$ being its structure sheaf; note that $\Ext^1(\mathcal{O}_{L}(1),\op3[1])\simeq H^0(\mathcal{O}_{L}(1))\ne0$. Therefore, a pair of nontrivial, linearly independent sections of $H^0(\mathcal{O}_{L}(1))$ yields an object $A\in D^{\rm b}(\p3)$ given by the triangle
$$ 2\cdot\op3[1] \to A \to \mathcal{O}_{L}(1), $$
which does not split as a direct sum; note that $\ch(A)=(-2,0,1,0)$. We will now argue that such $A\in D^{\rm b}(\p3)$ is asymptotically $\labs$-stable along $\{\beta=\obeta\}$ when $0\le\obeta<3/(6s-2)$ for $s>1/3$. Note that we cannot simply apply Theorem \ref{vert-prop-1}, since $\tilde{A}=2\cdot\op3$ is not $\mu$-stable.

First note that $2\cdot\op3\in\cohab$ whenever $\alpha>|\beta|$ while $\mathcal{O}_{L}(1)\in\cohab$ for every $(\alpha,\beta)$. It follows that $A\in\cohab$ whenever $\alpha>|\beta|$.

Let $F\into A\onto G$ be a short exact sequence in $\mathcal{A}^{\alpha,\obeta}$ for every $\alpha\gg0$; Lemma \ref{vert-lem-1} implies that $\calh^{-2}(G)=0$ and $\dim\calh^{0}(F)\le2$. As in the proof of Theorem \ref{vert-prop-1}, we must consider 3 situations: $\ch_0(F)=0$, $\ch_0(F)=-1$, and $\ch_0(F)=-2$. 
%We revert to the notation of \textcolor{red}{\cite[Section 2.3]{JM19}}. 

If $\ch_0(F)=0$, then $\tilde{F}=0$, and following the of Theorem \ref{vert-prop-1} when $\beta\ge\mu(A)=0$, it is enough to consider the case when $\ch_1(F)=0$ as well. It follows that $\tilde{G}$ is torsion free and we have the following exact sequence in $\coh(X)$:
$$ 0 \to 2\cdot\op3 \to \tilde{G} \to F_{00} \stackrel{u}{\to} \mathcal{O}_{L}(1) \to G_{00} \to 0. $$
If $u=0$, then $\tilde{G}=2\cdot\op3\oplus F_{00}$ because ${\rm Ext}^1(F_{00},2\cdot\op3)=H^2(F_{00}(-4))^{\oplus2}=0$ (because $\dim F_{00}=1$), contradicting the fact that $\tilde{G}$ must be torsion free. It follows that $u$ must be a monomorphism, since $\ker u\ne0$ would similarly contradict the torsion freeness of $\tilde{G}$; thus $\tilde{G}=2\cdot\op3$ and  $F=F_{00}=\mathcal{O}_L(1-z)$ for $z\ge1$, so that  
$$ \lim_{\alpha\to\infty} \left( \lambda_{\alpha,\overline{\beta},s}(F)-\lambda_{\alpha,\overline{\beta},s}(A) \right) =
\dfrac{6s-2}{3}\obeta - z < 0. $$

Next, we consider the case $\ch_0(F)=-2$, so that $\ch_0(\tilde{G})=0$. As in the proof of Theorem \ref{vert-prop-1}, it is enough to consider the case when $G=G_{00}$; in the case at hand, either $\dim G_{00}=0$ or $G_{00}=\mathcal{O}_L(1)$, in which case
$$ \lim_{\alpha\to\infty} \left( \lambda_{\alpha,\overline{\beta},s}(G)-\lambda_{\alpha,\overline{\beta},s}(A) \right) =
\dfrac{6s-2}{3}\obeta > 0, $$
as desired.

Finally, let $F\into A$ be a sub-object with $\ch_0(F)=-1$ and $\mu(F)=0$. It follows that $\tilde{F}=I_C$, the ideal sheaf of a codimension $\ge2$ subscheme $C\subset\p3$, so $2\cdot\op3/\tilde{F}=\op3\oplus\mathcal{O}_C$; but this is a subsheaf of $\tilde{G}$ which, as mentioned in the proof of Theorem \ref{vert-prop-1} for $\obeta\ge\mu(A)$, can only have torsion in codimension 1, thus $C=\emptyset$ and $\tilde{F}=\op3$. In addition, we can also conclude that the morphism $\tilde{G}\to F_{00}$ must vanish ($\ker u\ne0$ would contradict the fact that any torsion of $\tilde{G}$ must have dimension equal to 2), and hence $\tilde{G}\simeq\op3$ as well. In addition, it also follows that $F_{00}\simeq\mathcal{O}_{L}(1-z)$ for some $z>0$, and 
$$ \lim_{\alpha\to\infty} \alpha^2\left( \lambda_{\alpha,\overline{\beta},s}(F)-\lambda_{\alpha,\overline{\beta},s}(A) \right) =
\dfrac{6s-2}{3}\obeta - 2z < 0. $$

%if $z=0$, then there exists an epimorphism $A\onto\op3[1]$, contradicting the hypothesis that $A$ is defined by two nontrivial sections of $\Ext^1(\mathcal{O}_{L}(1),\op3[1])$.
\hfill\qed
\end{example}

%%%%%%%%%%%%%%%%%%%%%%%%%%%%%%%%%%%%%%%%%%%%%%%%%%%%%%%%%%%%%%%%%%%%%%%%%%%%%%%
%%%%%%%%%%%%%%%%%%%%%%%%%%%%%%%%%%%%%%%%%%%%%%%%%%%%%%%%%%%%%%%%%%%%%%%%%%%%%%%

%\section{Rank 2 torsion free sheaves}\label{sec:tf}

%%%%%%%%%%%%%%%%%%%%%%%%%%%%%%%%%%%%%%%%%%%%%%%%%%%%%%%%%%%%%%%%%%%%%%%%%%%%%%%
%%%%%%%%%%%%%%%%%%%%%%%%%%%%%%%%%%%%%%%%%%%%%%%%%%%%%%%%%%%%%%%%%%%%%%%%%%%%%%%

\section{Rank 2 instanton sheaves}\label{sec:instantons}

Recall that a torsion free sheaf $E$ on $\p3$ is called an \emph{instanton sheaf} if $\ch_1(E)=0$ and
\begin{equation}\label{instcond}
h^0(E(-1))=h^1(E(-2))=h^2(E(-2))=h^3(E(-3))=0.
\end{equation}
The positive integer $c:=-\ch_2(E)$ is called the \emph{charge} of $E$. Equivalently, there is a complex $M$ of the form
\begin{equation}\label{monad}
 c\,\op3(-1) \stackrel{a}{\rightarrow} (r+2c)\op3 \stackrel{b}{\rightarrow} c\,\op3(1),
\end{equation}
with $r$ being the rank of $E$, such that $\calh^{-1}(M)=\calh^{1}(M)=0$ and $\calh^0(M)\simeq E$; the complex $M$ is called a \emph{monad} for $E$. Note that $\ch(E)=(2,0,-c,0)$. Further details on the definitions and equivalences mentioned in this paragraph can be found in \cite{J-i}.

One can show that if $E$ is a nontrivial rank 2 instanton sheaf, then $E^{**}$ is a (possible trivial) locally free instanton sheaf and $Q_E:=E^{**}/E$ has pure dimension 1 \cite[Main Theorem]{GJ}. In addition, $E$ is Gieseker stable, being $E$ $\mu$-stable precisely when $E^{**}$ is nontrivial \cite[Theorem 4]{JMT}, and the sheaf $Q_E$ is semistable \cite[Lemma 6]{JMT}.

\begin{proposition}\label{inst-prop}
If $E$ is an instanton sheaf, then $A:=E[1]$ is asymptotically $\labs$-stable along any vertical line $\{\beta=\obeta\}$ for every $\obeta<0$ when $s>1/3$.
\end{proposition}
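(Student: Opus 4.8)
The plan is to verify the five hypotheses of Theorem \ref{vert-prop-1} for $A=E[1]$, and to supply a direct argument in the single case the theorem does not literally cover. Since $\ch(E)=(2,0,-c,0)$ we have $\ch(A)=(-2,0,c,0)$, so $\ch_0(A)=-2\ne0$ and $\mu(A)=0>\obeta$ for every $\obeta<0$; thus we are always in the range $\obeta<\mu(A)$. Because $A=E[1]$ is concentrated in a single degree, $\calh^{0}(A)=0$ and $\calh^{-1}(A)=E$, so conditions (1), (2) and (5) of Theorem \ref{vert-prop-1} are vacuous. With $\mu(A)=0$, the right-hand side of the inequalities in conditions (4) and (5) collapses to $\tfrac{6s+1}{3}(-\obeta)+\obeta=\tfrac{2-6s}{3}\obeta$, which is strictly positive when $s>1/3$ and $\obeta<0$; this already signals that $s>1/3$ and $\obeta<0$ is the relevant range, exactly as in Example \ref{stability ic}.

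For condition (4) I would argue as follows. If $E$ is locally free it is reflexive and the first alternative of (4) holds. Otherwise, by \cite[Main Theorem]{GJ}, $Q_E:=E^{**}/E=\tilde A^{**}/\tilde A$ has pure dimension $1$; writing $\ch(E^{**})=(2,0,-c',0)$ gives $\ch(Q_E)=(0,0,c-c',0)$, and in particular $\ch_3(Q_E)=0$. Since $Q_E$ is semistable by \cite[Lemma 6]{JMT}, every subsheaf $R\into Q_E$ satisfies $\ch_3(R)/\ch_2(R)\le\ch_3(Q_E)/\ch_2(Q_E)=0$, which lies strictly below the positive threshold $\tfrac{2-6s}{3}\obeta$ found above; this is precisely the inequality demanded by (4).

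The crux is condition (3), and here the argument must branch. When $E^{**}$ is nontrivial, \cite[Theorem 4]{JMT} gives that $E$ is $\mu$-stable, all five hypotheses hold, and Theorem \ref{vert-prop-1} yields the claim at once. When $E^{**}=\op3^{\oplus2}$ the sheaf $E$ is only strictly $\mu$-semistable, so I cannot quote Theorem \ref{vert-prop-1} verbatim; instead I would rerun its proof for $\obeta<\mu(A)$, examining a subobject $F\into A$ by the value of $\ch_0(F)\in\{0,-1,-2\}$, the cases $\ch_0(F)=0$ and $\ch_0(F)=-2$ being unchanged. The main obstacle is the case $\ch_0(F)=-1$ with $\mu(F)=0$: here $\mu$-semistability only makes the leading term vanish, and the sign of $\lambda_{\alpha,\obeta,s}(F)-\lambda_{\alpha,\obeta,s}(A)$ is governed at order $\alpha^{2}$ by $(s+\tfrac16)(\mu(A)-\obeta)\delta_{20}^{\obeta}(F,A)-\tfrac12\delta_{30}^{\obeta}(F,A)$. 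The saturation $\tilde F=I_C\into E$ is a rank-one subsheaf of slope $0$, and Gieseker stability of $E$ (again \cite[Theorem 4]{JMT}) forces $\ch_2(I_C)<\ch_2(E)/2=-c/2$; in particular $C\ne\emptyset$, so $E$ carries no destabilizing copy of $\op3$, and $\delta_{20}^{\obeta}(F,A)<0$, which makes the $\delta_{20}$-contribution negative. The genuinely delicate point that remains is to control the $\delta_{30}$-term uniformly as $\obeta\to0^{-}$, i.e. to exclude $\ch_3(\tilde G)>0$ for the rank-one quotient sheaf $\tilde G=\calh^{-1}(A/F)$. I would establish this by combining the complementary degree bound $-\ch_2(\tilde G)<c/2$ (which follows from the same Gieseker inequality applied to the four-term sequence relating $\tilde F$, $E$, $\tilde G$ and $\calh^0(F)$) with the tilt-semistability of $E$ and the purity of $Q_E$, carrying out the residual estimate exactly in the spirit of the direct computation of Example \ref{example dual instanton}.
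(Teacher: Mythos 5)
Your architecture coincides with the paper's: locally free $E$ is handled via Theorem \ref{vert-prop-1} (Example \ref{refl mu-st}); non-locally-free $E$ with $E^{**}$ nontrivial via $\mu$-stability plus condition (4), which you verify exactly as the paper does, through the semistability of $Q_E$ and $\ch_3(Q_E)=0$; and when $E^{**}\simeq 2\cdot\op3$ you rerun the proof of Theorem \ref{vert-prop-1} according to $\ch_0(F)\in\{0,-1,-2\}$. The gap sits in the case you yourself isolate, $\ch_0(F)=-1$, and it is genuine: you never prove the required control of the $\delta_{30}$-term, i.e.\ that $\ch_3(F)=\ch_3(\tilde G)\le 0$; you only assert that you ``would establish'' it from the degree bound, tilt-semistability of $E$, and purity of $Q_E$. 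Those ingredients do not suffice. Gieseker stability only gives $\ch_2(I_C)\le\ch_2(E)/2$ (not the strict inequality you claim: ties are broken at the $\ch_3$ level, so equality is possible when $c$ is even), and this leaves open the possibility that $\tilde G=I_Z$ where $Z$ has one-dimensional components of degree up to $c/2$. For such $Z$ one can have $\ch_3(I_Z)>0$ (a disjoint union of $k$ lines gives $\ch_3(I_Z)=k$), and then $\lim_{\alpha\to\infty}\alpha^2\bigl(\labs(F)-\labs(A)\bigr)$ becomes positive once $\obeta$ is close to $0$, because the negative $\delta_{20}$-contribution is proportional to $|\obeta|$ while the positive $\delta_{30}$-contribution is not. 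Neither tilt-semistability of $E$ nor purity of $Q_E$ constrains this quotient-side invariant, so the ``residual estimate'' you defer is exactly the missing proof, and it cannot be carried out from your stated inputs.

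What closes this case in the paper is a structural fact extracted from the \emph{proof} of \cite[Theorem 4]{JMT}, not merely its statement: for an instanton sheaf with $E^{**}\simeq 2\cdot\op3$, every saturated rank-one slope-zero subsheaf $I_C\into E$ has $C$ of pure dimension one with $\ch_2(I_C)=\ch_2(E)=-c$ --- the curve carries the \emph{full} second Chern class, not just half of it --- and the corresponding quotient $E/I_C$ is the ideal sheaf of a zero-dimensional scheme. Positivity of degrees then forces $Z$ and $\calh^0(F)$ to be zero-dimensional, of lengths $z\ge p$, so $\ch(F)=(-1,0,c,p-z)$; hence $\delta_{20}(F,A)=-c<0$ and $\delta_{30}(F,A)=2(z-p)\ge 0$ hold simultaneously, and the limit above is strictly negative for every fixed $\obeta\le 0$ and $s\ge 1/3$. (Your concern about uniformity as $\obeta\to 0^{-}$ is beside the point, since the claim is for each fixed $\obeta<0$; but the sign of $\delta_{30}$ is the real issue even at fixed $\obeta$, so you flagged the right difficulty --- it simply is not resolved by the route you sketch.)
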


\begin{proof}
If $E$ is locally free, then $E$ is $\mu$-stable, so $A$ is asymptotically $\labs$-stable for every $s>0$, according to Example \ref{refl mu-st}. 

If $E$ is not locally free and $E^{**}$ is nontrivial, it is enough to check condition (4) in Theorem \ref{vert-prop-1}, since $E$ is $\mu$-stable.

Indeed, let $R$ be a subsheaf of $Q_E$ with $\ch(R)=(0,0,d,x)$ for some $d>0$. Since $Q_E$ is semistable with $\chi(Q_E(t))=k\cdot(t+2)$, we have
$$ \dfrac{\chi(R)}{\deg(R)} = \dfrac{2d+x}{d} \le 2 = \dfrac{\chi(Q_E)}{\deg(Q_E)}, $$
thus $x\le0$; it follows that
$$ \dfrac{\ch_3(R)}{\ch_2(R)} \le 0 < \dfrac{2-6s}{3}\obeta, $$
with the second inequality holding when $\obeta<0$ and $s>1/3$.

Finally, let us consider the case when $E$ is not locally free and $E^{**}$ is trivial, so that one cannot apply Theorem \ref{vert-prop-1} directly since $E$ is strictly $\mu$-semistable. However, considering an exact sequence $F\into E[1]\onto G$ within $\cohab$ for $\alpha\gg0$, we can follow the strategy of the proof of Theorem \ref{vert-prop-1} and consider three separate cases: $\ch_0(F)=-2$, $\ch_0(F)=-1$ and $\ch_0(F)=0$. 

By Lemma \ref{vert-lem-3}, exact sequence $F\into E[1]\onto G$ satisfies, for $\alpha\gg0$, $\calh^{-2}(F)=\calh^{-2}(G)=0$, $\dim\calh^0(F)\le2$ and $\calh^{-1}(F)$ and $\calh^{-1}(G)$ are both torsion free sheaves; in addition, $\calh^0(G)=0$ in this case. If $\ch_0(F)=-2$, then $\calh^{-1}(G)$ cannot be torsion free, while the hypothesis $\ch_0(F)=0$ leads to the same inequality of the item (4) in Theorem \ref{vert-prop-1}, which we already checked.

Therefore, it remains for us to check the case when $\ch_0(F)=\ch_0(G)=-1$ and $\mu(F)=\mu(G)=0$, so both $\calh^{-1}(F)$ and $\calh^{-1}(G)$ are ideal sheaves. We end up with the following exact sequence of Sheaves
$$ 0 \to I_C \to E \stackrel{f}{\to} I_Z \to P \to 0, $$
where $I_C:=\calh^{-1}(F)$, $I_Z:=\calh^{-1}(G)$ and $P:=\calh^0(F)$. Since $E$ is an instanton sheaf, the proof of \cite[Theorem 4]{JMT} implies that $C$ must pure dimension 1 and $\im f$ is the ideal sheaf of a (possibly empty) zero dimensional scheme, thus $\ch_2(I_C)=\ch_2(E)=-c$. Since $\im f \into I_Z$, we have that $\dim Z=0$, thus also $\dim P=0$ if $P\ne0$. Letting $z$ and $p$ be the lengths of $Z$ and $P$, respectively, we have $0\le p\le z$ (because $z-p$ is the length of $\op3/\im f$) and 
$$ \ch(\calh^{-1}(F))=(1,0,-c,z) ~~{\rm and}~~ \ch(\calh^{0}(F))=(0,0,0,p). $$
It follows that $\ch(F)=(-1,0,c,p-z)$, thus every sub-object of $E[1]$ within $\mathcal{A}^{\alpha,\obeta}$ with $\alpha\gg0$ and $\obeta\le0$ satisfies
$$ \delta_{20}(F,A)=-c<0 ~~{\rm and}~~ \delta_{30}(F,A)=2(z-p)\ge0, $$
thus 
$$ -\dfrac{3s-1}{3}\delta_{20}(F,A)\obeta + \dfrac{1}{2}\delta_{20}(F,A) - \dfrac{1}{2}\delta_{30}(F,A) < 0 $$
when $\obeta\le0$ and $s\ge1/3$.
\end{proof}

\begin{remark}\label{rem-inst}
The following two claims also follow from the proof of Proposition \ref{inst-prop}:
\begin{enumerate}
\item When $s<1/3$, there exists $\epsilon>0$ such that for every nontrivial instanton sheaf, the object $E[1]$ is asymptotically $\labs$-stable along a vertical line $\{\beta=\obeta\}$ for $-\epsilon<\obeta<0$. 
\item If $E$ is a non locally free instanton sheaf, then $A=E[1]$ is strictly asymptotically $\labs$-semistable along the vertical axis $\{\beta=0\}$ for every $s>0$; a destabilizing sequence is precisely $Q_E \to E[1] \to E^{**}[1]$. Note that the vertical axis, which coincides with $\Gamma_{v,s}^0$, is itself a numerical $\lambda$-wall.\hfill\qed
\end{enumerate}
\end{remark}

On the other hand, we show that there is an open subset $V\subset\HH$ containing the point $(\alpha=1/\sqrt{6s+1},\beta=0)$ in its boundary such that $E[1]$ is $\labs$-stable for every $(\alpha,\beta)\in V$ and every $s>0$.

To see this, the starting point is the fact that the monad \eqref{monad} can be broken down into two triangles
$$ c\,\op3(1) \to K[1] \to (2+2c)\op3[1] ~~{\rm and}~~ 
K[1] \to E[1] \to c\,\op3(-1)[2], $$
where $K:=\ker b$; note that $w:=\ch(K[1])=(-2-c,c,c/2,c/6)$. Setting $u:=\ch(\op3(1))$ and $v:=\ch(E[1])$, we obtain numerical $\lambda$-walls for $K[1]$ and $E[1]$ respectively denoted by $\Upsilon_{u,w,s}$ and $\Upsilon_{w,v,s}$, which intersect at the point $(\alpha=1/\sqrt{6s+1},\beta=0)$.
%\begin{align*}
%\Upsilon_{u,w,s} ~:~ (6s+1)\alpha^4 + ((6s-2)\beta^2-(6s-2)\beta-1)\alpha^2 + \beta^4 - 2\beta^3 + \beta^2 & = 0 \\
%\Upsilon_{w,v,s} ~:~ (6s+1)\alpha^4 - ((6s-2)\beta^2+(c+1)(6s-2)\beta+(6s+1)c+1)\alpha^2 + & \\
%\beta^4 + 2(c+1)\beta^3 + (3c+1)\beta^2 - c & = 0
%\end{align*}
Observe that the objects $\op3(-1)[2]$, $\op3[1]$ and $\op3(1)$ do belong to $\cohab$ in an open neighbourhood of the point $(\alpha=1/\sqrt{6s+1},\beta=0)$ for every $s>0$, and therefore the objects $K[1]$ and $E[1]$ also belong to $\cohab$ in the same neighbourhood. It also follows that both $K[1]$ and $E[1]$ are $\labs$-semistable at the point $(\alpha=1/\sqrt{6s+1},\beta=0)$ for every $s>0$. Consequently, there is an open set $V\subset\HH$, bounded the $\alpha$-axis and by the numerical $\lambda$-wall $\Upsilon_{w,v,s}$, such that $E[1]$ is $\labs$-stable for every $(\alpha,\beta)\in V$, as desired. 

\begin{figure}
    \centering
    \includegraphics{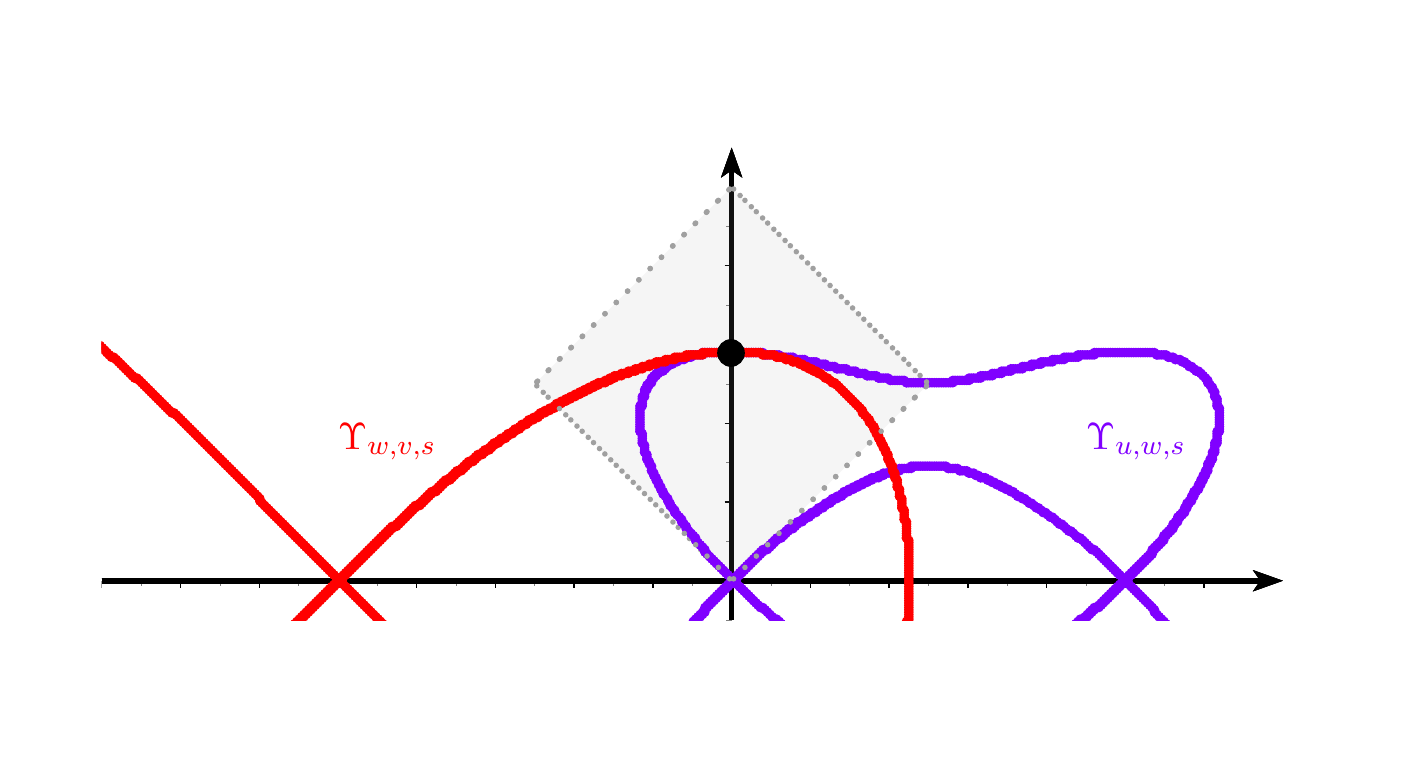}
    \caption{This graph shows the numerical $\lambda$-walls $\Upsilon_{u,w,s}$ (in purple) and $\Upsilon_{w,v,s}$ (in red) crossing at the point $(\alpha=1/\sqrt{3},\beta=0)$; we set $s=1/3$ and $c=2$. The shaded region marks where the objects $\op3(-1)[2]$, $\op3[1]$ and $\op3(1)$, and consequently also $K[1]$ and $E[1]$, belong to $\cohab$.}
    \label{fig:my_label}
\end{figure}

It is tempting to conjecture that there exists an $\epsilon>0$ such that every instanton sheaf is $\labs$-stable for $\alpha>\sqrt{6s+1}+\epsilon$ and $-\epsilon\le\beta\le0$ and every $s>0$. This statement will verified when $\beta=0$ for the cases $c=1,2$, in Proposition \ref{LSSO_(-2,0,1,0)}, Example \ref{walls-charge1}, and Example \ref{walls-charge2} below.

%%%%%%%%%%%%%%%%%%%%%%%%%%%%%%%%%%%%%%%%%%%%%%%%%%%%%%%%%%%%%%%%%%%%%%%%%%%%%%%
%%%%%%%%%%%%%%%%%%%%%%%%%%%%%%%%%%%%%%%%%%%%%%%%%%%%%%%%%%%%%%%%%%%%%%%%%%%%%%%

\begin{remark}\label{monad-wall}
For the curve $\{\beta=0\}$, the $\lambda$-wall produced by the subobject $K[1]$, which destabilizes all shifts of instanton sheaves, is given by the equation
$$
(6s+1)\alpha^2=1.
$$
Crossing this wall does not produce any new $\lambda$-semistable flipped objects since $$
\Ext^1(K[1],\mathcal{O}_{\mathbb{P}^3}(-1)[2])=0.
$$ 
For charge 1, we will see later that, indeed, the last step is vacuous. However, the strategy works in greater generality: one must analyse the type of object which arises, and classify walls for that type. By local finiteness, this process will terminate after finitely many steps.
\end{remark}

Let us begin with the classification of asymptotically $\labs$-semistable objects along $\{\beta=0\}$. First, let us recall the classification of $\mu$-semistable sheaves of Chern character $(2,0,-1,0)$:

\begin{proposition}\cite[Proposition 8.2]{JM19}\label{(2,0,-1,0)}
Let $E$ be a $\mu$-semistable torsion free sheaf on $X=\p3$ with $\ch(E)=(2,0,-1,0)$.
\begin{enumerate}
\item If $E$ is locally free, then $E$ is a null correlation bundle; in particular, $E$ is $\mu$-stable.
\item If $E$ is properly torsion free, then $E$ is strictly $\mu$-semistable and it is given by one of the following extensions:
\begin{itemize}
\item[(2.1)] $0\to I_{L} \to E \to I_{p} \to 0$ for $p\in L$ with nontrivial extension; in particular, $E$ is a null correlation sheaf and it is $\mu_{\le2}$-stable;
\item[(2.2)] $0\to I_{p} \to E \to I_{L} \to 0$ for arbitrary $p$ and $L$; in particular, $E$ is not $\mu_{\le2}$-semistable and it has no global sections;
\item[(2.3)] $0\to \op3 \to E \to I_{\tilde{L}} \to 0$, where $\tilde{L}$ is a 1-dimensional scheme satisfying the sequence $0\to\mathcal{O}_p\to\mathcal{O}_{\tilde{L}}\to\mathcal{O}_L\to0$ for arbitrary $p$ and $L$; in particular, $E$ is not $\mu_{\le2}$-semistable and it has a global section.
\end{itemize}
\end{enumerate}
\end{proposition}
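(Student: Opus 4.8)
The plan is to read off the numerical data $c_1(E)=0$, $c_2(E)=1$, $c_3(E)=0$ from $\ch(E)=(2,0,-1,0)$, and to use that, since $\mu(E)=0$, $\mu$-semistability is equivalent to $H^0(E(-1))=0$. I would then treat the locally free and the properly torsion free cases separately. For part (1), suppose $E$ is locally free. I would first upgrade $\mu$-semistability to $\mu$-stability: if $E$ were strictly $\mu$-semistable it would admit a nonzero section $s$, and since $c_1=0$ this section cannot vanish in codimension $1$ (a divisorial vanishing $D>0$ would give $\mathcal{O}(D)\hookrightarrow E$ of positive slope), so its zero scheme is a curve of degree $c_2=1$, namely a line $L$, fitting into $0\to\mathcal{O}\to E\to I_L\to0$. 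For a rank-$2$ bundle the Serre correspondence would then force $\det N_{L/\mathbb{P}^3}\cong(\det E)|_L=\mathcal{O}_L$, contradicting $\det N_{L/\mathbb{P}^3}=\mathcal{O}_L(2)$. Hence $E$ is $\mu$-stable, and I would conclude by invoking the classical fact that every $\mu$-stable rank-$2$ bundle on $\mathbb{P}^3$ with $c_1=0$, $c_2=1$ is a null correlation bundle.

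For part (2), let $E$ be properly torsion free. The first observation is that $E$ cannot be reflexive: for a rank-$2$ reflexive sheaf one has $\ch_3=\tfrac12 c_3\ge0$, with equality if and only if the sheaf is locally free, so $\ch_3(E)=0$ would make a reflexive $E$ locally free. Thus $Q:=E^{**}/E\ne0$ is supported in codimension $\ge2$. I would next rule out $\mu$-stability of $E$: stability passes to the reflexive hull, so $E^{**}$ would be $\mu$-stable reflexive with $c_2(E^{**})=1$ (it cannot be $0$, as that forces $\mathcal{O}^{\oplus2}$), which forces $\ch_2(Q)=0$, i.e. $Q$ $0$-dimensional, and then $c_3(E^{**})=2\,\mathrm{length}(Q)>0$. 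But by the classification of stable rank-$2$ reflexive sheaves on $\mathbb{P}^3$ a stable one with $c_1=0$, $c_2=1$ has symmetric (one-element) spectrum $\{0\}$, hence $c_3=0$ and is in fact a bundle — a contradiction. Therefore $E$ is strictly $\mu$-semistable.

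Strict semistability provides a slope-$0$ saturated rank-$1$ subsheaf with torsion-free slope-$0$ quotient, i.e. a sequence $0\to I_{W_1}\to E\to I_{W_2}\to0$ with $W_1,W_2$ of codimension $\ge2$ and $\deg W_1+\deg W_2=c_2=1$. I would organize the three outcomes by the finer invariant $\mu_{\le2}$ together with $H^0(E)$, noting that the $\ch_2$-slope of $E$ is $-\tfrac12$ while that of a slope-$0$ rank-$1$ sheaf is $-\deg$ of its one-dimensional part. When $E$ is $\mu_{\le2}$-stable the one-dimensional part must sit in the \emph{sub}, giving the null correlation sheaf (2.1), $0\to I_L\to E\to I_p\to0$; when $E$ is not $\mu_{\le2}$-semistable the destabilizing slope-$0$ subsheaf has $\ch_2$-slope $0$, and the two possibilities split according to $H^0(E)=0$, giving (2.2) with $0\to I_p\to E\to I_L\to0$, and $H^0(E)\ne0$, giving a genuine section $\mathcal{O}\hookrightarrow E$ (automatically saturated, since a rank-$1$ torsion-free slope-$0$ sheaf containing $\mathcal{O}$ equals $\mathcal{O}$) with torsion-free quotient $I_{\tilde L}$, i.e. (2.3). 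Throughout I would use $\ch_3(E)=0$ to control the zero-dimensional and embedded components: it forces $W_2$ (resp. $W_1$) to be a single reduced point in (2.1) (resp. (2.2)), and in (2.3) it forces $\tilde L$ to be a line carrying exactly one embedded point, that is $0\to\mathcal{O}_p\to\mathcal{O}_{\tilde L}\to\mathcal{O}_L\to0$. The incidence $p\in L$ and the nontriviality of the extension in (2.1) are exactly the conditions making $E$ torsion free and $\mu_{\le2}$-stable.

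The main obstacle I anticipate is the bookkeeping in the last step: the destabilizing slope-$0$ subsheaf is not unique, so I must verify that every choice lands in one of the three listed configurations and that no spurious case survives — for instance a quotient isomorphic to $\mathcal{O}$, or a different distribution of the single available embedded/zero-dimensional point among $W_1$ and $W_2$. Pinning the precise scheme structures down using only the single scalar constraint $\ch_3=0$, and matching the trichotomy cleanly against $\mu_{\le2}$-stability and $H^0(E)$, is the delicate part; the external input that no non-locally-free $\mu$-stable reflexive sheaf with $c_1=0$, $c_2=1$ exists (the spectrum/Hartshorne classification) is the other key ingredient, since it is what collapses the reflexive-hull analysis to the clean dichotomy above.
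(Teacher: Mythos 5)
A preliminary remark: the paper does not prove this proposition at all --- it quotes it as \cite[Proposition 8.2]{JM19} --- so your argument has to stand entirely on its own; I review it on its merits. Your part (1) is essentially correct (modulo the trivial case of a nowhere-vanishing section, and the legitimate citation of the classical fact that $\mu$-stable rank-2 bundles with $c_1=0$, $c_2=1$ are null correlation bundles), and the skeleton of part (2) --- non-reflexivity from ``$c_3\ge 0$ with equality iff locally free'', exclusion of $\mu$-stability via the double dual, and the filtration $0\to I_{W_1}\to E\to I_{W_2}\to 0$ with the $\ch_3=0$ count --- is the right one. The genuine gap is exactly the case you flag and then leave open: the filtration can have $W_2=\emptyset$, i.e. $0\to I_{\tilde{L}}\to E\to \op3\to 0$ with $\tilde{L}$ a line carrying one extra (embedded or isolated) point. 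This case is \emph{not} excluded by $\mu_{\le2}$-stability, since a quotient of $\ch_2$-slope $0>-1/2$ is perfectly compatible with stability of $E$; and it genuinely occurs, because $h^1(I_{\tilde{L}})=1$, so nontrivial such extensions exist, and they are torsion free, $\mu$-semistable, of Chern character $(2,0,-1,0)$, with $H^0(E)=0$. Hence your trichotomy ($\mu_{\le2}$-stable $\Rightarrow$ (2.1); otherwise split by $H^0(E)$) does not close as stated. What is missing is a proof that such an $E$ is never $\mu_{\le2}$-stable, i.e. that it always re-filters as type (2.2). This is true, but it needs a real argument, not bookkeeping: the connecting map of $0\to I_{\tilde{L}}\to I_L\to \mathcal{O}_p\to 0$ identifies $H^0(\mathcal{O}_p)\simeq H^1(I_{\tilde{L}})=\Ext^1(\op3,I_{\tilde{L}})$, so every extension class is a Yoneda composite $\xi\circ q$ with $q\colon\op3\to\mathcal{O}_p$ the evaluation map; its pullback along $I_p\hookrightarrow\op3$ is $\xi\circ(q|_{I_p})=0$ because sections of $I_p$ vanish at $p$, so $I_p$ always lifts to a subsheaf of $E$, destroying $\mu_{\le2}$-stability and producing a sequence of type (2.2). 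Without this (or an equivalent dualization argument) the classification is incomplete.

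Two further points. First, the ``in particular'' clauses are part of the statement, and you do not prove them: most notably, that \emph{every} nontrivial extension $0\to I_L\to E\to I_p\to 0$ with $p\in L$ is $\mu_{\le2}$-stable (equivalently, that no $I_Z$ with $Z$ zero-dimensional embeds in $E$); your closing sentence asserts this equivalence rather than establishing it. Note also that torsion-freeness is automatic for \emph{any} extension of a torsion-free sheaf by a torsion-free sheaf, so it is not what ``$p\in L$ plus nontriviality'' buys you; what those conditions give is non-splitness, via $\Ext^1(I_p,I_L)\simeq\Ext^1(\mathcal{O}_p,\mathcal{O}_L)$, which vanishes when $p\notin L$. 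Second, your mechanism for excluding stable non-locally-free reflexive sheaves with $c_1=0$, $c_2=1$ --- a ``symmetric one-element spectrum'' --- is circular: for reflexive sheaves, symmetry of the spectrum amounts to $c_3=0$, i.e. to local freeness, which is exactly what you are trying to conclude. The fact you need is true, but it should be invoked as Chang's classification (\cite[Lemma 2.1]{Chang}, which this paper itself uses elsewhere) or deduced from Hartshorne's bounds on $c_3$ for stable reflexive sheaves; with that citation, and with the gap of the previous paragraph filled, your outline would become a correct proof.
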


Notice that if $E$ is a sheaf of type (2.2) and (2.3) in Proposition \ref{(2,0,-1,0)}, then $E[1]$ is not $\labs$-semistable when $(\alpha,\beta)\in R^0_v$ and $s>0$, for they are destabilised by $\mathcal{O}_p\into E[1]$. Moreover, a sheaf of type (2.3) cannot be $\labs$-semistable for $(\alpha,\beta)\in R_v^-$ and $s>0$ either, since  
$$ \labs(E) - \labs(\op3) = \dfrac{((6s-2)\alpha^2+\beta^2)\beta}{\ch_2^{\alpha,\beta}(E)\ch_2^{\alpha,\beta}(\op3)} < 0. $$
In addition, this same inequality shows that the object $A$ obtained from flipping the exact sequence $\mathcal{O}_p\into E[1]\onto S_L[1]$, then $\op3[1]$ is also a $\labs$-destabilising sub-object for $A$ when $(\alpha,\beta)\in R^0_v$.

On the other hand, we have already seen in Section \ref{sec:instantons} that the sheaves of type (1) and (2.1) in Proposition \ref{(2,0,-1,0)}, which are the instanton sheaves of charge 1, are $\lambda_{1/\sqrt{6s+1},0,s}$-semistable for every $s>0$.

We will now show that every asymptotically $\labs$-semistable object along $\{\beta=0\}$ is either the shift of an instanton sheaf of charge 1 or of its dual.

\begin{proposition}\label{LSSO_(-2,0,1,0)}
An object $A\in D^{\rm b}(\p3)$ with $\ch(A)=(-2,0,1,0)$ is asymptotically $\labs$-semistable along $\{\beta=0\}$ if and only if
\begin{enumerate}
\item $A=E[1]$, where $E$ is a rank 2 instanton sheaf of charge 1; or
\item $A=E^\vee[1]$, where $E$ is a rank 2 properly torsion free instanton sheaf of charge 1.
\end{enumerate}
In addition, $A$ is asymptotically $\labs$-stable along $\{\beta=0\}$ if and only if $A=E[1]$, where $E$ is a rank 2 locally free instanton sheaf of charge 1.
\end{proposition}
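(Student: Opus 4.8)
The plan is to prove the two implications separately, using Theorem \ref{vert-prop-2} to constrain the shape of a semistable $A$ and then the classification of low-rank sheaves on $\p3$ together with derived duality to pin down the two families. For the \emph{if} direction, note that a locally free charge-$1$ instanton $E$ (a null correlation bundle) is $\mu$-stable, so $E[1]$ is asymptotically $\labs$-stable by Example \ref{refl mu-st}; a properly torsion free charge-$1$ instanton $E$ gives an $E[1]$ that is strictly asymptotically $\labs$-semistable along $\{\beta=0\}$ by Remark \ref{rem-inst}(2). For a dual $A=E^\vee[1]$ one argues exactly as in Example \ref{example dual instanton} that $A$ is asymptotically $\labs$-semistable; since the summand $\op3[1]\into\op3^2[1]\into A$ has $\labs(\op3[1])=\labs(A)=0$ for all $\alpha$, such an $A$ is strictly semistable and hence never stable. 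This already isolates the null correlation bundles as the only candidates for stability.

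For the \emph{only if} direction, assume $A$ is asymptotically $\labs$-semistable with $\ch(A)=(-2,0,1,0)$ and apply Theorem \ref{vert-prop-2} with $\obeta=\mu(A)=0$. Writing $\tilde A=\calh^{-1}(A)$ and $T=\calh^0(A)$, the theorem gives that $A$ is a two-term complex, that $\tilde A$ is a rank $2$ $\mu$-semistable sheaf with $\ch_1(\tilde A)=0$, and that $\dim T\le1$; moreover all the right-hand sides in its inequalities vanish, so they collapse to sign conditions on third Chern characters. The Bogomolov inequality for $\tilde A$ forces $\ch_2(\tilde A)\le0$, and since $\ch_2(A)=\ch_2(T)-\ch_2(\tilde A)=1$ with $\ch_2(T)\ge0$, exactly two possibilities remain: $\ch_2(\tilde A)=-1$ (so $T$ is $0$-dimensional) or $\ch_2(\tilde A)=0$ (so $T$ is pure of dimension $1$ and degree $1$).

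In the first case I would first show $T=0$: a simple quotient $\mathcal{O}_p$ of $T$ lifts, via the connecting map in the triangle $\tilde A[1]\to A\to T$ and the vanishing of $\Ext^{\le2}(\mathcal{O}_p,\tilde A)$ where $\tilde A$ is locally free, to a nonzero morphism $\mathcal{O}_p\to A$, which is a monomorphism in $\cohab$; a $0$-dimensional sheaf has maximal phase, so it destabilizes $A$. Hence $A=\tilde A[1]$ with $\ch(\tilde A)=(2,0,-1,0)$, and Proposition \ref{(2,0,-1,0)} lists the four $\mu$-semistable candidates for $\tilde A$. Types (2.2) and (2.3) are destabilized by $\mathcal{O}_p\into\tilde A[1]$ (as recorded after Proposition \ref{(2,0,-1,0)}), leaving a null correlation bundle or sheaf, i.e. a charge-$1$ instanton, which is case (1). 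In the second case, Bogomolov applied to $\tilde A^{**}$ excludes $\tilde A^{**}/\tilde A$ from being pure of dimension $1$ (it would force $\ch_2(\tilde A^{**})>0$), so condition (4) of Theorem \ref{vert-prop-2} makes $\tilde A$ reflexive; a $\mu$-semistable reflexive rank $2$ sheaf with $c_1=c_2=0$ on $\p3$ is $\op3^2$. Thus $A$ fits in a triangle $\op3^2[1]\to A\to\mathcal{O}_L(1)$. Setting $E:=A^\vee[1]$ and dualizing yields $\mathcal{O}_L(1)[-1]\to E\to\op3^2\xrightarrow{\phi}\mathcal{O}_L(1)$, where $\phi$ encodes the extension class; $E$ is a sheaf exactly when $\phi$ is surjective, in which case $E=\ker\phi$ is a rank $2$ torsion free sheaf with $\ch(E)=(2,0,-1,0)$ and $E^{**}/E=\mathcal{O}_L(1)$, hence a properly torsion free charge-$1$ instanton and $A=E^\vee[1]$ is case (2). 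When $\phi$ fails to be surjective the extension degenerates, $A$ splits off a copy of $\op3[1]$, and the resulting strictly semistable object is $S$-equivalent to one already on the list.

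The main obstacle is the second case: proving $\tilde A\cong\op3^2$ cleanly, executing the derived-duality computation so as to recognize $A^\vee[1]$ as the dual of an instanton sheaf precisely when the extension class is nondegenerate, and reconciling the degenerate, decomposable, strictly semistable objects with the stated families up to $S$-equivalence. A secondary technical point is controlling the $0$-dimensional subobject argument at the singular points of $\tilde A$ in the first case. Finally the stability statement follows since every case-(1) object with $E$ not locally free and every case-(2) object contains a proper subobject of equal $\labs$-slope (for instance the factor $\op3[1]$, or the Jordan--Hölder factors $\mathcal{O}_L(1)$ and $\op3^2[1]$), hence is strictly semistable, whereas a null correlation bundle is $\mu$-stable and so $E[1]$ is asymptotically $\labs$-stable by Example \ref{refl mu-st}.
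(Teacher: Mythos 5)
Your overall strategy does track the paper's: both proofs run Theorem \ref{vert-prop-2} at $\obeta=\mu(A)=0$, split into the two cases $\ch_2(\calh^{-1}(A))=-1$ and $\ch_2(\calh^{-1}(A))=0$, and identify the second case with the duals of properly torsion free instantons. However, there is a genuine gap in your first case. Your plan is to ``first show $T=0$'' by lifting a point sheaf $\mathcal{O}_p\into T$ to a destabilizing monomorphism $\mathcal{O}_p\into A$, but the obstruction to this lift lives in $\Ext^2(\mathcal{O}_p,\tilde A)\simeq\Ext^1(\tilde A,\mathcal{O}_p)^*$, which is nonzero exactly when $p$ is a singular point of $\tilde A$; so your argument only shows that $T$ is supported in $\sing(\tilde A)$, not that $T=0$. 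The leftover cases are not empty and not technical: with $\ch(\tilde A)=(2,0,-1,1)$ one has either $\tilde A=S_L$ (the reflexive sheaf given by $0\to\op3\to S_L\to I_L\to0$) or $\tilde A=\op3\oplus I_L$, and $T=\mathcal{O}_p$ with $p\in\sing(\tilde A)$, and nonsplit extensions $\tilde A[1]\to A\to\mathcal{O}_p$ do exist. For such $A$ there is \emph{no} nonzero map $\mathcal{O}_q\to A$ at all (since $\Hom(\mathcal{O}_q,\tilde A[1])=\Ext^1(\mathcal{O}_q,\tilde A)=0$ because $\tilde A$ has projective dimension $\le1$, and a lift of $\mathcal{O}_p\into T$ would split the sequence), so the $0$-dimensional-subobject mechanism provably cannot exclude them. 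The paper kills them by a completely different device: the epimorphism $A\onto I_L[1]$ induced by $\op3\into\tilde A$ (i.e.\ by the \emph{strict} $\mu$-semistability of $\tilde A$), together with the computation $\lambda_{\alpha,0,s}(A)-\lambda_{\alpha,0,s}(I_L[1])>0$ for all $\alpha$. Note this cannot be circumvented inside your framework: Example \ref{refl mu-st} shows that for a $\mu$-\emph{stable} singular reflexive sheaf such extensions by point sheaves are honestly asymptotically stable, so the exclusion here genuinely depends on the fact that the Chern character $(2,0,-1,1)$ forces $\tilde A$ to contain $\op3$, and your proof never uses this. Also, your reduction afterwards to Proposition \ref{(2,0,-1,0)} silently assumes $T=0$ forces $\ch_3(\tilde A)=0$, which is fine once $T=0$ is known, but the missing cases above are precisely those with $\ch_3(\tilde A)=1$.

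A secondary weakness is your treatment of degenerate extension classes in the second case. The proposition classifies \emph{objects}, not $S$-equivalence classes, so saying that a degenerate $A$ (where $\phi:2\op3\to\mathcal{O}_L(1)$ is not surjective, e.g.\ $A=2\op3[1]\oplus\mathcal{O}_L(1)$) ``is $S$-equivalent to one already on the list'' does not discharge the claim: such an $A$ is not of the form $E[1]$ or $E^\vee[1]$ for a sheaf $E$, so you would need either to prove it is not asymptotically semistable or to explain why it does not arise. (The paper's own proof is admittedly terse at this exact point, passing from $\calh^{-1}(A)=2\op3$, $\calh^0(A)=\mathcal{O}_L(1)$ directly to ``objects of type (2)''; your explicit identification of $A^\vee[1]$ with $\ker\phi$ when $\phi$ is surjective is a nice addition, but the nondegenerate/degenerate dichotomy still needs to be closed rather than waved at.) The ``if'' direction and the final stability statement in your proposal are correct and follow the paper's own citations.
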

\begin{proof}
We have already seen in Section \ref{sec:instantons} that objects of type (1) above are asymptotically $\labs$-semistable along $\{\beta=0\}$, and that $A$ is asymptotically $\labs$-stable along $\{\beta=0\}$ only when $E$ is locally free. 

If $E$ a properly torsion free rank 2 instanton sheaf of charge 1, then it satisfies the triangle $\mathcal{O}_L(1)\to E[1] \to 2\cdot\op3[1]$; $E^\vee[1]$ satisfies the flipped triangle, and these were shown to be asymptotically $\labs$-semistable along $\{\beta=0\}$ in Example \ref{example dual instanton} above.

Conversely, assume that $A$ is asymptotically $\labs$-semistable object along $\{\beta=0\}$; we set $E:=\mathcal{H}^{-1}(A)$ and $T:=\mathcal{H}^{0}(A)$, with $\ch(E)=(2,0,-a,b)$ and $\ch(T)=\ch(A)+\ch(E)=(0,0,1-a,b)$. Note that $a\ge0$, because $E$ is $\mu$-semistable, while $1-a\ge0$, thus either $a=0$ or $a=1$. 

Consider fist the case $a=0$. $E^{**}$ is $\mu$-semistable, thus $\ch_2(E^{**})\le0$; however, $\ch_2(E^{**})=\ch_2(E^{**}/E)\ge0$, thus $\ch_2(E^{**})=0$. But $E^{**}/E$ has pure dimension 1, hence $E\simeq E^{**}=2\cdot\op3$, since the only $\mu$-semistable rank 2 reflexive sheaf with $\ch_1=\ch_2=0$ is the trivial one. It follows that $\ch(T)=(0,0,1,0)$, thus $T=\mathcal{O}_L(1)$, giving the objects of type (2).

Next, assume $a=1$, so $\ch(E)=(2,0,-1,b)$ and $\ch(T)=(0,0,0,b)$ so $b\ge0$. 

If $E$ is reflexive, then $b=0,1$. If $b=0$, then $E$ must be a locally free instanton sheaf of charge 1, as mentioned in the proof of \cite[Proposition 8.2]{JM19}. If $b=1$, then $E=S_L$ is a nontrivial extension
$$
0\rightarrow \mathcal{O}_{\mathbb{P}^3}\rightarrow S_L\rightarrow I_L\rightarrow 0,
$$
for some line $L$, and $T=\mathcal{O}_p$. 

However, such an object, which is defined by the exact sequence in $\mathcal{A}^{\alpha,0}$
\begin{equation}\label{slop}
0 \to S_L[1] \to A \to \mathcal{O}_p \to 0
\end{equation}
is not asymptotically $\labs$-semistable along $\{\beta=0\}$: there exists an epimorphism $A\onto I_L[1]$ whose kernel is $\op3[1]\oplus\mathcal{O}_p$;
% given by the exact sequence
%$$ 0 \to \op3[1] \to F \to \mathcal{O}_p \to 0; $$
since 
$$ \lambda_{\alpha,0,s}(A) - \lambda_{\alpha,0,s}(I_L[1]) = \dfrac{2}{\alpha^2+1}>0, $$
we conclude that the object given by the sequence in display \eqref{slop} is never $\labs$-semistable along the $\alpha$-axis.

Finally, if $E$ is not reflexive, then let $Q_E:=E^{**}/E$; this is a sheaf of pure dimension 1, hence $1\le\ch_2(Q_E)=\ch_2(E^{**})+1\le1$, since $\ch_2(E^{**})\le0$ by the Bogomolov inequality. It follows that $E^{**}=2\cdot\op3$ and $\ch(Q_E)=(0,0,1,-b)$, thus $Q_E=\mathcal{O}_L(-b+1)$ for some line $L$. But $h^0(Q_E)>0$, so $-b+1\ge0$; recalling that $b\ge0$, we again must have $b=0,1$.

If $b=1$, then $E=\op3\oplus I_L$, and it is clear that the object $A$ given by the exact sequence
$$ 0 \to \op3[1]\oplus I_L[1] \to A \to \mathcal{O}_p \to 0 $$
cannot be asymptotically $\labs$-semistable along the $\alpha$-axis, as it is also destabilised by the epimorphism $A\onto I_L[1]$. 

We are left with the case $b=0$, which forces $E$ to be a properly torsion free instanton sheaf, since it satisfies the exact sequence $0\to E\to2\cdot\op3\to\mathcal{O}_L(1)\to0$ in $\coh(X)$. 
\end{proof}

Similar computations as in Proposition \ref{LSSO_(-2,0,1,0)} allow us to describe all the asymptotically $\lambda_{\alpha,\beta,s}$-semistable objects of Chern character $(-2,0,2,0)$ along $\{\beta=0\}$. More precisely, we have the following:
\begin{proposition}
An object $A\in D^b(\mathbb{P}^3)$ with $\ch(A)=(-2,0,2,0)$ is asymptotically  $\lambda_{\alpha,\beta,s}$-semistable along $\{\beta=0\}$ if and only if
\begin{enumerate}
    \item $A=E[1]$, where $E$ is a rank 2 instanton sheaf of charge 2; such objects are limit stable if and only if $E$ is locally free.
    \item $A=E^{\vee}[1]$, where $E$ is a non locally free rank 2 instanton sheaf of charge 2; all of these objects are strictly limit semistable, and are S-equivalent to $E[1]$.
    %\item $A$ fits into an exact sequence 
     %$$ 0\rightarrow E[1]\rightarrow A\rightarrow \mathcal{O}_{L}(1)\rightarrow 0, $$
    %where $E$ is a rank 2 instanton sheaf of charge 1; all these objects are strictly semistable.
    \item $A$ fits into an exact sequence 
    $$ 0\rightarrow F[1]\rightarrow A\rightarrow \mathcal{O}_{L}(2)\rightarrow 0, $$
    where $F$ is a strictly $\mu$-semistable sheaf with $\ch(F)=(2,0,-1,1)$; all of these objects are strictly limit semistable.
    \item $A$ fits into an exact sequence 
    $$
    0\rightarrow S[1]\rightarrow A\rightarrow Z\rightarrow 0,
    $$
    where $S$ is a $\mu$-semistable reflexive sheaf with $\ch(S)=(2,0,-2,z)$ and $Z$ is a 0-dimensional sheaf of length $z$; such an object is strictly limit semistable if and only if $S$ is strictly $\mu$-semistable.
\end{enumerate}
\end{proposition}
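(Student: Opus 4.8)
The plan is to mirror the proof of Proposition \ref{LSSO_(-2,0,1,0)}, combining the necessary conditions of Theorem \ref{vert-prop-2} with the sufficient conditions of Theorem \ref{vert-prop-1}, both specialised to $\obeta=0=\mu(A)$. At $\obeta=0$ the common right-hand side $\frac{6s+1}{3}(\mu(A)-\obeta)+\obeta$ of all the inequalities in those two theorems vanishes, so they collapse to sign conditions on the relevant $\ch_3$. Writing $E:=\calh^{-1}(A)$ and $T:=\calh^{0}(A)$, Theorem \ref{vert-prop-2} yields $\calh^{-2}(A)=0$, $\dim T\le1$ and $E$ $\mu$-semistable; hence $\ch(E)=(2,0,-a,b)$ and $\ch(T)=(0,0,2-a,b)$. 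The Bogomolov inequality gives $\ch_2(E)\le0$ while $\dim T\le1$ gives $\ch_2(T)\ge0$, so $0\le a\le2$, and I would organise the necessity (\emph{only if}) part around the three values $a=0,1,2$, exactly as in the charge $1$ case.

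In the case $a=2$ one has $\ch(T)=(0,0,0,b)$, so $T=Z$ is $0$-dimensional of length $b\ge0$. If $Z=0$ then $A=E[1]$ with $E$ a $\mu$-semistable torsion-free sheaf of Chern character $(2,0,-2,0)$, which one identifies with a charge $2$ instanton via the cohomological criterion, giving type (1); if $Z\ne0$ I would use conditions (2) and (4) together with an explicit destabilising quotient to force $E$ reflexive, producing type (4) with $S=E$. The case $a=0$ is the most rigid: Bogomolov applied to $E^{**}$ gives $\ch_2(E^{**})\le0$, yet were $E$ non-reflexive, condition (4) would force $E^{**}/E$ to be pure of dimension $1$, while $\ch_2(E^{**}/E)=\ch_2(E^{**})\le0$ contradicts purity; hence $E$ is reflexive with $\ch_2=0$, so $E\cong2\cdot\op3$ and $T$ is a dimension-$1$ sheaf of degree $2$ with $\ch_3(T)=0$, which I would recognise as $E_0^\vee[1]$ for a charge $2$ instanton $E_0$ with trivial reflexive hull (type (2)). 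In the intermediate case $a=1$ the sheaf $E$ has $\ch(E)=(2,0,-1,b)$ and $T$ has degree $1$; invoking Proposition \ref{(2,0,-1,0)} I would separate the $\mu$-stable (null-correlation) sub-case, whose dual gives type (2) with a charge $1$ reflexive hull, from the strictly $\mu$-semistable sheaf with $\ch(E)=(2,0,-1,1)$ and $T=\mathcal{O}_L(2)$, which is type (3), discarding the remaining configurations by exhibiting destabilising maps as in Proposition \ref{LSSO_(-2,0,1,0)}.

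For the converse (\emph{if}) direction I would verify asymptotic semistability of each listed object. Type (1) follows from Proposition \ref{inst-prop} and Remark \ref{rem-inst}(2), which also yields strict semistability unless $E$ is locally free (the stability claim). For type (2) I would compute $E^\vee[1]$ by dualising the triangle $Q_E\to E[1]\to E^{**}[1]$, identifying $\calh^{-1}(E^\vee[1])=(E^{**})^{*}$ and $\calh^{0}(E^\vee[1])=\inext^2(Q_E,\op3)$, and then adapt Example \ref{example dual instanton}; the $S$-equivalence to $E[1]$ comes from both objects sharing the limiting Jordan--Hölder factors $E^{**}[1]$ and a semistable $1$-dimensional factor of the same Chern character. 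Types (3) and (4) cannot be handled by Theorem \ref{vert-prop-1} verbatim, since there $\tilde A$ is only strictly $\mu$-semistable; instead I would argue directly, as in Example \ref{example dual instanton} and Example \ref{refl mu-st}, examining the limit of $\labs(F)-\labs(A)$ over the finitely many sub-objects, and reading off the strict-versus-stable dichotomy from whether $S$ (resp. $F$) is $\mu$-stable.

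The main obstacle is the necessity direction. Unlike charge $1$, where $T$ was forced to be either $0$-dimensional or $\mathcal{O}_L(1)$, here $T$ may be $1$-dimensional of degree up to $2$ and $E$ ranges over both reflexive and non-reflexive $\mu$-semistable sheaves, so each value of $a$ breaks into several sub-cases that must be matched to a listed type or eliminated. The delicate feature is that $\{\obeta=0\}$ is itself a numerical $\lambda$-wall (Remark \ref{rem-inst}), placing us on a boundary where strictly semistable objects and $S$-equivalences are unavoidable; the bulk of the work will be in constructing the explicit destabilising sub- and quotient objects (the charge $2$ analogues of the $\op3[1]$- and $I_L[1]$-destabilisers used in Proposition \ref{LSSO_(-2,0,1,0)}) to rule out the spurious sheaves, and in the $\Ext$/$\Hom$ bookkeeping needed to confirm that the shifted duals of type (2) are genuinely $S$-equivalent to the corresponding instanton shifts.
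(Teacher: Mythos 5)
Your proposal is correct and follows essentially the same route as the paper's own proof: the decomposition $0\to E[1]\to A\to T\to 0$ supplied by Theorem \ref{vert-prop-2}, the same three-way case analysis on the degree of $T$ (your $a$ is $2$ minus the paper's, which indexes by $\ch_2(T)$), the same matching of sub-cases to types (1)--(4) using Bogomolov on $E^{**}$, purity of $E^{**}/E$, and the classification of rank 2 sheaves with $\ch=(2,0,-1,b)$, and the same ingredients for the converse (Proposition \ref{inst-prop}, Remark \ref{rem-inst}, Example \ref{example dual instanton}, and a direct sub-object analysis that the paper packages as Lemma \ref{S[1]AZ}). The one caveat worth recording is that the step you describe as identifying a limit-semistable $\mu$-semistable sheaf $E$ with $\ch(E)=(2,0,-2,0)$ as an instanton ``via the cohomological criterion'' is not a formal consequence of Theorems \ref{vert-prop-1}--\ref{vert-prop-2}: by the paper's own remark on Hartshorne's examples there exist strictly $\mu$-semistable bundles with these invariants that are not instantons (and whose shifts are not limit semistable), and the required vanishing $h^1(E(-2))=0$ is only supplied by the wall estimates of Section \ref{sec:beta=0} (Lemma \ref{second-vanishing} and its corollary) --- a dependence that the paper's write-up of this case also leaves implicit, so your plan is on par with it.
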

\begin{proof}
Assume that $A$ is limit semistable; from Theorem \ref{vert-prop-2} we know that $A$ fits into an exact sequence in $\mathcal{A}^{\alpha,0}$ of the form
\begin{equation}\label{type0}
0\rightarrow E[1]\rightarrow A\rightarrow T\rightarrow 0,
\end{equation}
where $E$ is a $\mu$-semistable sheaf and $T$ is a torsion sheaf supported in dimension $\leq 1$. Let $\ch(T)=(0,0,a,b)$, then $\ch(E)=(2,0,a-2,b)$ and since $E$ satisfies the Bogomolov inequality we must have $0\leq a\leq 2$. Recall also that $Q_E:=E^{**}/E$ has pure dimension 1. 

{\bf Case $a=2$}. In this case, $\ch_2(E)=0$ and since $E^{**}$ is also $\mu$-semistable then $\ch_2(E^{**})\leq 0$. This implies that $E$ is reflexive, since otherwise $E^{**}/E$ would be supported in dimension 1 by Theorem \ref{vert-prop-2}. Since the only rank 2 reflexive sheaf with $\ch_1(E)=\ch_2(E)=0$ is trivial, we conclude that $E=2\cdot\op3$. It follows that $A$ fits into an exact sequence
$$ 0\rightarrow 2\cdot\op3[1]\rightarrow A\rightarrow T \rightarrow 0
$$
with $T$ being a semistable 1-dimensional sheaf with $\ch(T)=(0,0,2,0)$; according to \cite[Prop 18]{JMT}, either $T=\mathcal{O}_{L'}(1)\oplus\mathcal{O}_{L''}(1)$, where $L'$ and $L''$ are lines (potentially coincident), or  $T=\mathcal{O}_{C}(3{\rm pt})$, where $C$ is a conic. In both cases, $A=E^\vee[1]$ where $E$ is a non locally free instanton sheaf of charge 2. Since $\lambda_{\alpha,0,s}(\op3[1])=\lambda_{\alpha,0,s}(T)=0$, every such object is strictly limit semistable.

{\bf Case $a=1$}. Firstly, suppose that $E$ is reflexive, then $b=0,1$. If $b=0$, then $E$ is a rank 2 locally free instanton sheaf of charge 1 (see \cite[Lemma 2.1]{Chang}) and $T=\mathcal{O}_L(1)$ for some line $L$. If $b=1$, then $E$ is strictly $\mu$-semistable and, also according to \cite[Lemma 2.1]{Chang}, it fits into an exact sequence of sheaves
$$ 0\rightarrow \mathcal{O}\rightarrow E\rightarrow I_{L'}\rightarrow 0, $$
for some line $L'$. Thus $A$ fits into an exact sequence 
\begin{equation}\label{type3}
0 \rightarrow E[1] \rightarrow A \rightarrow \mathcal{O}_{L''}(2) \rightarrow 0,
\end{equation}
for some other line $L''$.

If $E$ is not reflexive, then $Q_E=E^{**}/E$ is pure 1-dimensional and so $\ch_2(Q_E)\geq 1$. Notice that
$$
\ch_2(E^{**})+1=\ch_2(E^{**})-\ch_2(E)=\ch_2(Q_E)\geq 1,
$$
and so $E^{**}=2\cdot\op3$, since $E^{**}$ reflexive and $\mu$-semistable with $\ch_2(E^{**})=0$. Therefore, $Q_E=\mathcal{O}_L(1-b)$ for some line $L$ and moreover $Q_E$ has a section and so $1-b\geq 0$. Thus $b=0,1$ because of part (2) of Theorem \ref{vert-prop-2}; 

\begin{itemize}
\item if $b=1$, then $E=\op3\oplus I_{L'}$, for some line $L'$, and $A$ fits into an exact sequence like the one in display \eqref{type3};
\item if $b=0$, then $Q_E=\mathcal{O}_L(1)$, which implies that $E$ is a properly torsion-free instanton sheaf of charge 1, and $A$ fits into an exact sequence
$$
0\rightarrow E[1] \rightarrow A\rightarrow \mathcal{O}_{L'}(1)\rightarrow 0;
$$
again, $A=F^\vee[1]$, where $F$ is a non locally free rank 2 instanton sheaf of charge 2; note that these objects are strictly limit semistable, since both $E[1]$ and $\mathcal{O}_{L'}(1)$ are limit stable and have the same slope.
\end{itemize}

Finally, we check that objects given by the short exact sequence in display \eqref{type3} are strictly limit semistable. Indeed, such objects also fit in the sequence
$$ 0\to \op3[1] \to A \to G \to 0 ~,~ \textrm{ where }~ 0 \to I_{L'}[1] \to G \to \mathcal{O}_{L''}(2) \to 0; $$
since both $\op3[1]$ and $G$ are limit stable and have the same slope, $A$ is strictly limit semistable.

%and so $I_L[1]$ would be a subobject of $A$ with $\mu(I_L[1])=\mu(A)=0$ \textcolor{red}{is there a surjection $A\rightarrow I_L[1]$???  There is an injection $\mathcal{O}[1]\to A$ which must destabilize if $A\to I_L[1]$ destabilizes.}

{\bf Case $a=0$}. In this case, $\ch(T)=(0,0,0,b)$, so $T$ is a 0-dimensional sheaf of length $b\ge0$.

If $E$ is locally free, then
$$ \Ext^1(T,E[1]) \simeq \Ext^1(E,T)^* \simeq H^1(E^*\otimes T)^* = 0, $$
so exact sequences as in display \eqref{type0} split and therefore are not limit semistable if $b\ne0$. Therefore, these objects are of type (1) and are limit stable.

If $E$ is reflexive, then an object of the form
$$ 0\rightarrow E[1]\rightarrow A\rightarrow T\rightarrow 0,$$
such that no subsheaf $U\hookrightarrow T$ factors through $A$, satisfies all of the conditions of Theorem \ref{vert-prop-2}. To see that such objects are limit (semi)stable if and only if $E$ is $\mu$-(semi)stable, see Lemma \ref{S[1]AZ} below.

If $E$ is not reflexive, then $E$ is a non locally-free instanton sheaf of charge 2. It then follows that $\Ext^1(T,E[1])=0$, so such objects are limit semistable if and only if $b=0$. In addition, these objects are of type (1) and are limit semistable, see Remark \ref{rem-inst}.

%\color{red}{otherwise there would be a surjection $A\rightarrow I_Y[1]$, which would destabilize $A$} ... \color{blue}{Marcos: actually, I think that $E$ can be strictly semistable...}
\end{proof}

\begin{lemma}\label{S[1]AZ}
Let $S$ be a rank 2 reflexive sheaf on $\mathbb{P}^3$ with $\mu(S)=0$, and let $Z$ be a 0-dimensional sheaf with $h^0(Z)=\ch_3(S)$. An object $A$ defined by the triangle
$$ S[1] \rightarrow A \rightarrow Z $$
such that no subsheaf $U\to Z$ lifts to a sub-object of $A$ is asymptotically $\lambda_{\alpha,0,s}$-(semi)stable if and only if $S$ is $\mu$-(semi)stable
%Let $A$ be is asymptotically $\lambda_{\alpha,0,s}$-semistable with $\ch(A)=(-2,0,D,0)$. If $\dim\mathcal{H}^{0}(A)=0$ and $E:=\mathcal{H}^{-1}(A)$ is reflexive, then $E[1]$ and $\op3[1]$ are the only destabilizing sub-objects of $A$.
\end{lemma}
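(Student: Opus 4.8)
The plan is to prove Lemma \ref{S[1]AZ} by a direct application of Theorems \ref{vert-prop-1} and \ref{vert-prop-2} with $\obeta=0$, reducing the asymptotic (semi)stability of $A$ to the $\mu$-(semi)stability of $S$ via a careful check of the five conditions in each theorem. First I would record the numerical data: since $\mu(S)=0$ we have $\ch(S)=(2,0,-c,z)$ for some $c\ge0$ and $z=\ch_3(S)=h^0(Z)$, so $\ch(A)=\ch(S[1])+\ch(Z)=(-2,0,c,0)$, giving $\mu(A)=0=\obeta$. The object $A$ sits in the triangle $S[1]\to A\to Z$ with $\calh^{-1}(A)=\tilde A=S$ reflexive and $\calh^0(A)=Z$ a $0$-dimensional (hence at most $1$-dimensional) sheaf; the hypothesis that no subsheaf $U\to Z$ lifts to a sub-object of $A$ is exactly what will let me discharge condition (5).

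For the \emph{if} direction, assume $S$ is $\mu$-stable. I would verify the conditions of Theorem \ref{vert-prop-1} in turn. Conditions (1) and (2) hold trivially: $\calh^p(A)=0$ for $p\ne-1,0$, $\dim\calh^0(A)=0\le1$, and since $Z$ is $0$-dimensional every quotient $P$ has $\ch_2(P)=0$, so the inequality in (2) is vacuous. Condition (3) is the hypothesis that $\tilde A=S$ is $\mu$-stable, and condition (4) holds because $S$ is reflexive (so $\tilde A^{**}/\tilde A=0$). The only substantive point is condition (5): if $U$ is a sheaf of dimension $\le1$ with a nonzero $u\colon U\to\calh^0(A)=Z$ lifting to a monomorphism $U\into A$ in $\cohab$ for $\alpha\gg0$, then since $\dim Z=0$ the image of $u$ is $0$-dimensional, so $\ch_2(U)=0$ and $\ch_3(U)=\ch_3(\operatorname{im}u)\ge0>0$ would contradict the required inequality; but in fact the no-lifting hypothesis forces such a $U$ not to exist (any such lift would exhibit a subsheaf of $Z$ factoring through $A$), so condition (5) is vacuously satisfied. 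Hence $A$ is asymptotically $\labs$-stable for $s\ge1/3$; for the semistable version when $S$ is only $\mu$-semistable, I replace condition (3)/(4) by the corresponding necessary-and-sufficient analysis of sub-objects with $\mu(F)=\mu(A)=0$, as in the case $\ch_0(A)<\ch_0(F)<0$ of the proofs above, where $\delta_{20}(F,A)$ and $\delta_{30}(F,A)$ control the sign of the limiting slope difference.

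For the \emph{only if} direction, suppose $A$ is asymptotically $\labs$-semistable. By Theorem \ref{vert-prop-2}(3), $\tilde A=S$ is $\mu$-semistable, which gives the semistable half. To upgrade to stability, suppose $S$ is strictly $\mu$-semistable and let $S'\into S$ be a destabilizing $\mu$-stable subsheaf with $\mu(S')=0$; as in the proof of Theorem \ref{vert-prop-2}, $S'[1]\into A$ is a sub-object in $\cohab$ for $\alpha\gg0$, and the limit of $\lambda_{\alpha,0,s}(S'[1])-\lambda_{\alpha,0,s}(A)$ is controlled by the pairing $(s+\tfrac16)\mu(A)\,\delta_{20}^0(S'[1],A)-\tfrac12\delta_{30}^0(S'[1],A)$; since $\mu(A)=0$ this reduces to $-\tfrac12\delta_{30}^0(S',S)$ up to a positive factor, and I would show this cannot be strictly negative for all destabilizers, so $A$ is at most strictly semistable. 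The main obstacle here is the bookkeeping in this last computation: because $\mu(A)=\obeta=0$, the leading-order ($1/\alpha^2$) term in the slope difference vanishes for sub-objects of the same Mumford slope, so one must pass to the next order and extract the sign from the $\delta_{30}$ term, exactly as in the $\ch_0(F)=\ch_0(A)$ and same-slope analyses carried out in the proofs of Theorems \ref{vert-prop-1} and \ref{vert-prop-2}; identifying which sub-objects can occur (and ruling out spurious $0$-dimensional contributions via the no-lifting hypothesis) is the delicate step, whereas the reflexivity of $S$ conveniently kills condition (4) and removes any contribution from $\tilde A^{**}/\tilde A$.
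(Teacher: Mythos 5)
Your reduction to Theorems \ref{vert-prop-1} and \ref{vert-prop-2} covers only part of the lemma, and the part it misses is where the content lies. Theorem \ref{vert-prop-2}(3) does give (asymptotically semistable $\Rightarrow$ $S$ $\mu$-semistable), and Theorem \ref{vert-prop-1} gives ($S$ $\mu$-stable $\Rightarrow$ asymptotically stable), though only for $s\ge 1/3$, whereas the lemma is proved in the paper for every $s>0$. But the implication ($S$ strictly $\mu$-semistable $\Rightarrow$ $A$ asymptotically semistable) is not covered by anything you cite: Theorem \ref{vert-prop-1} requires $\mu$-stability in its condition (3); the case $\ch_0(A)<\ch_0(F)<0$ of its proof, which you point to, derives the strict inequality $\mu(F)<\mu(A)$ precisely from that $\mu$-stability, so it says nothing about sub-objects with $\mu(F)=\mu(A)=0$ — exactly the ones that occur when $S$ is strictly semistable; and the genuine iff statements (Theorems \ref{vert-thm-1}, \ref{vert-thm-2}, \ref{vert-thm-3}) are all unavailable here, since $\obeta=0$ is rational, $\gcd(v_0,v_1)=\gcd(2,0)\neq 1$, and the $\epsilon$-interval in Theorems \ref{vert-thm-2}/\ref{vert-thm-3} excludes $\obeta=0$. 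The missing idea is the one the paper's proof is built on: a direct classification of all sub-objects $F\into A$ for $\alpha\gg0$. The paper passes to $F'=\ker(F\to Z)$, a sub-object of $S[1]$, and uses reflexivity of $S$ to show $\calh^{-1}(F)\simeq\calh^{-1}(F')$ is reflexive; hence when it has rank one it is a line bundle $\op3(c)$, and then $\Ext^1(U,\op3(c)[1])=0$ for $U\subset Z$ zero-dimensional, combined with the no-lifting hypothesis, forces $F=\op3(c)[1]$; the rank-two case gives $\lambda_{\alpha,0,s}(A)-\lambda_{\alpha,0,s}(F)>0$ outright, and the rank-zero case is excluded by hypothesis. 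Only with this list in hand does (semi)stability of $A$ reduce to the sign of $c$, i.e.\ to $\mu$-(semi)stability of $S$, and it does so for every $s>0$. Your proposal never produces this classification, so the strictly semistable forward direction (and the stable direction for $s<1/3$) remains unproved.

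Two smaller problems. In your check of condition (5) of Theorem \ref{vert-prop-1}, the numerical argument is incorrect: $\dim Z=0$ gives $\ch_2(\im u)=0$, not $\ch_2(U)=0$, since $U$ may be one-dimensional with zero-dimensional image; and your fallback claim that any lift of $u:U\to Z$ "exhibits a subsheaf of $Z$ factoring through $A$" is exactly what must be proved — one needs $\ker u=0$, which the paper obtains (cf.\ Example \ref{refl mu-st}) from reflexivity of $S$ together with the observation that any torsion of $\calh^{-1}(A/U)$ would have to be two-dimensional while injecting into the at most one-dimensional $\ker u$. Finally, in your "only if" argument for stability, the vanishing of the limit of $\lambda_{\alpha,0,s}(S'[1])-\lambda_{\alpha,0,s}(A)$ does not by itself contradict asymptotic stability, since the difference could still be negative at every finite $\alpha$; what closes the argument is that the destabilizer is $S'=\op3$, for which $\lambda_{\alpha,0,s}(\op3[1])=0=\lambda_{\alpha,0,s}(A)$ identically, which is how the paper concludes that $A$ is strictly semistable.
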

\begin{proof}
Let $F\hookrightarrow A$ be a sub-object for $\alpha\gg0$ with $\ch(F)=(-r,c,d,e)$. Let $F'$ be the kernel of the composite morphism $F\hookrightarrow A\onto Z$, where $Z=\mathcal{H}^{0}(A)$; note that $F'$ is a sub-object of $S[1]$, and let $G':=S[1]/F'$. We get the following exact sequence of sheaves
$$ 0 \to \mathcal{H}^{-1}(F') \to S \to \mathcal{H}^{-1}(G') \to \mathcal{H}^{0}(F') \to 0 $$
and $\mathcal{H}^{0}(G')=0$. Moreover, $\mathcal{H}^{-1}(F')\simeq\mathcal{H}^{-1}(F)$ because $F/F'$ is a subsheaf of $Z$, thus $\ch_0(F')=r\le2$, $\ch_1(F')=c$  and $\ch_2(F')=d$. In addition, since $\mathcal{H}^{-1}(G')$ is torsion free and $S$ is reflexive, it follows that $\mathcal{H}^{-1}(F')$ is also reflexive.

If $r=2$, then $\mathcal{H}^{-1}(F')=S$ and $F$ is given by an exact sequence of the form
$$ 0 \to S[1] \to F \to U \to 0 $$
where $U$ is a subsheaf of $Z$.
It follows that $\ch_2(F)=\ch_2(A)$, and $\ch_3(F)=-\ch_3(S)+h^0(U)\le\ch_3(A)$, with the equality only occurring only when $F=A$. Therefore,
$$ \lambda_{\alpha,0,s}(A) - \lambda_{\alpha,0,s}(F) = \dfrac{\ch_3(A)-\ch_3(F)}{\ch_2(A)+\alpha^2}>0. $$

If $r=1$, then $\mathcal{H}^{-1}(F')=\op3(c)$, $F$ must be given by an exact sequence of the form
$$ 0 \to \op3(c)[1] \to F \to U \to 0 $$
where $U$ is a subsheaf of $Z$. However, $\Ext^1(U,\op3(c)[1])=0$, thus if $U\ne0$, then $U$ is a subsheaf of $Z$ lifting to a sub-object of $A$, contradicting our hypothesis. If follows that $U=0$, and $F=\op3(c)[1]$; one can check that 
$$ \lim_{\alpha\to\infty} \big( \lambda_{\alpha,0,s}(A) - \lambda_{\alpha,0,s}(F) \big) = -2(s+1/6)c. $$

Finally, if $r=0$, then $F$ is a subsheaf of $Z$ lifting to a sub-object of $A$, contradicting the hypothesis.

Therefore, if $A$ is asymptotically $\lambda_{\alpha,0,s}$-(semi)stable, then $c<0$ ($c\le0$), implying that $S$ is $\mu$-(semi)stable.

Conversely, if $S$ is $\mu$-stable, then $c<0$, and $A$ is asymptotically $\lambda_{\alpha,0,s}$-stable.

If $S$ is strictly $\mu$-semistable, then $\op3[1]$ is the only possible destabilizing sub-object of $A$; since $\lambda_{\alpha,0,s}(A) = \lambda_{\alpha,0,s}(\op3[1])=0$, we conclude that $A$ is strictly $\lambda_{\alpha,0,s}$-semistable.
\end{proof}

In section \ref{sec:beta=0}, we will classify all actual $\lambda$-walls for $v=(-2,0,1,0)$ and $v=(-2,0,2,0)$ when $\beta=0$.
\begin{confidential}
\begin{proposition}\label{gamma0inst}
Let $v=(-2,0,1,0)$ and assume $s>1/6$. There are precisely two actual $\lambda$-walls for $v$ intersecting the $\alpha$-axis, both of them at the point $(\alpha=1/\sqrt{6s+1},\beta=0)$, given by the sequences
\begin{enumerate}
\item the \emph{monad} wall $0\to\Omega^1_{\p3}(1)[1] \to A \to \op3(-1)[2] \to 0$;
\item the \emph{dual} wall $0\to\op3(1) \to A \to T\p3(-1)[1] \to 0$.
\end{enumerate}
The first wall is a vanishing $\lambda$-wall for $\beta\le0$ while the second one is a vanishing $\lambda$-wall for $\beta\ge0$ when $s>1/3$; the situation reverses when $s\le1/3$. 
\end{proposition}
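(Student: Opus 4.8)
\emph{Strategy.} The plan is to produce the two displayed sequences as actual walls, locate their intersection with $\{\beta=0\}$ by a short slope computation, rule out any further axis wall with the support property of Lemma \ref{qform} fed by the classification of Proposition \ref{LSSO_(-2,0,1,0)}, and finally read off the two sides by differentiating the wall equations at the common point. First I would record the Chern characters: the Euler sequence gives $\ch(\Omega^1_{\p3}(1))=(3,-1,-\tfrac12,-\tfrac16)$ and $\ch(T\p3(-1))=(3,1,-\tfrac12,\tfrac16)$, so both sequences add up to $v=(-2,0,1,0)$. Along $\{\beta=0\}$ one has $\ch_1(A)=\ch_3(A)=0$, hence $\labs(A)\equiv0$, and every transverse axis wall for $A$ (i.e. with destabilizing $F$ having $\ch_1(F)\ne0$) is the locus where the numerator $\ch_3(F)-(s+\tfrac16)\alpha^2\ch_1(F)$ of $\labs(F)$ vanishes. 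For both $F=\Omega^1_{\p3}(1)[1]$ and $F=\op3(1)$ this reads $\tfrac16-(s+\tfrac16)\alpha^2=0$, i.e. $(6s+1)\alpha^2=1$, the monad wall of Remark \ref{monad-wall}; this pins both curves to the single axis point $(\alpha=1/\sqrt{6s+1},\beta=0)$. Since the two subobjects have non-proportional Chern characters, the two numerical walls are distinct curves meeting the axis only there.

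\emph{Actuality of the two walls.} The Euler sequence $0\to\Omega^1_{\p3}(1)\to4\,\op3\to\op3(1)\to0$ identifies $\Omega^1_{\p3}(1)$ with the kernel $K$ of the map $b$ in the monad \eqref{monad} of a charge-$1$ instanton, so the triangle $K[1]\to E[1]\to\op3(-1)[2]$ of Section \ref{sec:instantons} is exactly the first sequence; a null correlation bundle or sheaf realizes it as a non-split extension, all three terms lie in $\cohab$ in a neighbourhood of the point with common slope $0$, and $E[1]$ is $\labs$-stable just above the wall by Proposition \ref{LSSO_(-2,0,1,0)}. Applying the derived dual $\RHom(-,\op3)$, which exchanges $\beta\leftrightarrow-\beta$ and sends $\Omega^1_{\p3}(1)\mapsto T\p3(-1)$, $\op3(-1)\mapsto\op3(1)$, carries this (up to the appropriate shift) into the second sequence $0\to\op3(1)\to E^\vee[1]\to T\p3(-1)[1]\to0$, whose actuality is Example \ref{example dual instanton}. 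Hence both sequences are actual walls.

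\emph{No other axis wall.} The crux is uniqueness. At any actual axis wall $0\to F\to A\to G\to0$ with $\ch(A)=v$ one has $\labs(F)=\labs(A)=0$, so Lemma \ref{qform} gives $Q_{\alpha,0,K}(F),Q_{\alpha,0,K}(G)\ge0$ and $Q_{\alpha,0,K}(F)+Q_{\alpha,0,K}(G)\le Q_{\alpha,0,K}(A)=4(K\alpha^2+1)$. Substituting the wall relation $6\ch_1(F)\ch_3(F)=(6s+1)\alpha^2\ch_1(F)^2$ into
\[
Q_{\alpha,0,K}(F)=K\alpha^2\bigl(\ch_1(F)^2-2\ch_0(F)\ch_2(F)\bigr)+4\ch_2(F)^2-6\ch_1(F)\ch_3(F)
\]
bounds the Bogomolov discriminant and $\ch_2(F)$, leaving only finitely many numerical types for $\ch(F)$; here the hypothesis $s>1/6$, equivalently $\alpha^2=1/(6s+1)<\tfrac12$, together with the freedom $K\in[1,6s+1)$, is what makes the bound effective. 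Matching each surviving type against the cohomological constraints of Theorem \ref{vert-prop-2} and the monad description of $E$ and $E^\vee$ should leave exactly $\ch(F)=\ch(\Omega^1_{\p3}(1)[1])$ and $\ch(F)=\ch(\op3(1))$, both with wall $\alpha^2=6\ch_3(F)/\bigl((6s+1)\ch_1(F)\bigr)=1/(6s+1)$. I expect this finite classification to be the main obstacle: unlike in the asymptotic regime, a destabilizing $F$ can have $\ch_0(\calh^{-1}(F))$ strictly larger than $\ch_0(E)$ (already $\ch_0(K)=3>2$ for the monad wall), so the subsheaf bound $\ch_0(\tilde F)\le\ch_0(E)$ is unavailable and one must control $F$ purely through the support property.

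\emph{Orientation of the walls.} Writing the numerical wall of $F$ as $W(\alpha^2,\beta)=N_F\,\rho_A-N_A\,\rho_F=0$, with $N_\bullet=\ch_3^\beta-(s+\tfrac16)\alpha^2\ch_1^\beta$ and $\rho_\bullet=\ch_2^\beta-\tfrac{\alpha^2}{2}\ch_0^\beta$, both $N_F$ and $N_A$ vanish at the common point, so implicit differentiation there yields, for the first (monad) wall,
\[
\frac{d(\alpha^2)}{d\beta}=\frac{\alpha^2-\tfrac13}{(s+\tfrac16)(1+\alpha^2)},\qquad\alpha^2=\frac{1}{6s+1},
\]
whence $\operatorname{sign}\dfrac{d(\alpha^2)}{d\beta}=\operatorname{sign}\bigl(\tfrac{1}{6s+1}-\tfrac13\bigr)=\operatorname{sign}(2-6s)$, while the dual wall has the opposite slope by the $\beta\leftrightarrow-\beta$ symmetry. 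Thus for $s>1/3$ the monad wall is the outermost wall along the lines $\{\beta=\obeta\}$ with $\obeta\le0$ and the dual wall is outermost for $\obeta\ge0$, and for $s<1/3$ the two interchange, with the slopes vanishing at $s=1/3$. In each case crossing the outermost wall destabilizes the relevant instanton shift with no flipped replacement, since $\Ext^1(\Omega^1_{\p3}(1)[1],\op3(-1)[2])=0$ by Remark \ref{monad-wall} and dually; these are therefore vanishing walls, giving precisely the asserted $\obeta\le0$ / $\obeta\ge0$ dichotomy and its reversal at $s=1/3$.
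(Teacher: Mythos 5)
Your proposal follows the same skeleton as the paper's proof (existence of the two walls via the monad triangles of Section \ref{sec:instantons} and Example \ref{example dual instanton}; uniqueness via a numerical classification of destabilizing classes at the point $\alpha^2=1/(6s+1)$; then a discussion of sides), and the existence half is essentially fine. The genuine gap is in the uniqueness half, which you explicitly defer (``should leave exactly\dots'') --- and, more seriously, the outcome you predict for that enumeration is false. At $\beta=0$, $\alpha^2=1/(6s+1)$ the wall relation reads $6\ch_3(F)=\ch_1(F)$, and the rank-zero class $(0,1,\tfrac12,\tfrac16)=\ch(\mathcal{O}_H(1))$ satisfies it together with \emph{all} of your support-property constraints: with $K=1$ one computes $Q_{\alpha,0,1}(F)=\alpha^2$, $Q_{\alpha,0,1}(G)=3\alpha^2$, and $Q_{\alpha,0,1}(F)+Q_{\alpha,0,1}(G)=4\alpha^2\le 4\alpha^2+4=Q_{\alpha,0,1}(v)$. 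This solution is not a numerical accident: since $\Ext^1(\mathcal{O}_H(1),E)\to\Ext^1(\op3(1),E)$ is surjective and the target is nonzero, there are honest monomorphisms $\mathcal{O}_H(1)\into E[1]$ in the heart, with quotient a shifted rank-two reflexive sheaf; the paper's proof finds exactly this solution in its rank-zero case and must dispose of it (and of its dual numerical solution $(0,0,1,-1)$, which is excluded by identifying the image of the induced map $E\to\mathcal{O}_H$ as $\mathcal{I}_{p,H}$) by separate geometric arguments, not by the numerics. Relatedly, Theorem \ref{vert-prop-2} is the wrong tool for finishing the enumeration: its conditions constrain \emph{asymptotic} destabilizers ($\alpha\gg0$ along a vertical line), not subobjects of $E[1]$ at the finite point $\alpha^2=1/(6s+1)$.

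The paper's enumeration also rests on structural inputs your outline has no substitute for. After reducing to $A=E[1]$ with $E$ a charge-one instanton (via Proposition \ref{LSSO_(-2,0,1,0)}), the paper (i) shows by a cohomological/octahedron argument that any destabilizing sequence induces one of three exact sequences of \emph{sheaves}, and (ii) shows via $3\times3$ diagrams that the destabilizing sub- and quotient objects may be taken $\nu_{\alpha,0}$-semistable. Step (ii) is what makes the classical Bogomolov inequalities available, and these, together with the categorical positivity $0<\ch_2^{\alpha,0}(A)<\ch_2^{\alpha,0}(E[1])$, the generalized Bogomolov inequalities for both factors, and the wall condition, are what let the integer arithmetic (using $s>1/6$) terminate in a short, explicit list. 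Your $Q$-only system can in fact be shown to bound the classes (one gets $0\le Q(F)\le Q(F,v)\le Q(v)$, which bounds $\ch_1(F)^2$, $\ch_2(F)$ and then $\ch_0(F)$), but you neither prove this nor run the resulting search, and as noted above the surviving list is strictly longer than the two classes you name. Finally, your determination of sides by the sign of $d(\alpha^2)/d\beta$ only records the tangent direction of the numerical wall; whether each wall is an actual vanishing wall for $\beta\le0$ or for $\beta\ge0$ is decided by where the destabilizing data exists --- membership of the terms in $\cohab$ on that side and the $\Ext$-vanishing of Remark \ref{monad-wall} --- which is how the paper concludes, not by the slope of the curve at the axis point.
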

\end{confidential}

\begin{confidential}
\begin{proof}
By the considerations above, we may assume that $A:=E[1]$, where $E$ is an instanton sheaf of charge 1.
Then $E[1]\in\coh^0(\mathbb{P}^3)\cap\mathcal{A}^{\alpha,0}$, $\lambda_{\alpha,0,s}(E)=0$ for all $\alpha$ and $s$, and $\ch^{\alpha,0}(E[1])=1+\alpha^2$. Suppose 
\begin{equation}\label{destab_seq}
0\to A\to E[1]\to B\to 0
\end{equation}
is a short exact sequence in $\mathcal{A}^{\alpha,0}$ for some $\alpha$ such that $\lambda_{\alpha,0,s}(A)=0$. We may assume $A$ is $\lambda_{\alpha,0,s}$-stable and $B$ is $\lambda_{\alpha,0,s}$-semistable. Let $\ch(A)=(r,x,y/2,z/6)$ for integers $r,x,y,z$. Then 
\begin{gather*}
\lambda_{\alpha,0,s}(A)=z/6-\alpha^2(s+1/6)x ~~ \text{and}\\
\ch_2^{\alpha,0}(A)=y/2-\alpha^2 r/2.
\end{gather*}
So $x=0$ implies $z=0$ and then $\beta=0$, which is the wall we do not want. We will now show that excluding this case results in one of three possible short exact sequences in $\coh(X)$:
\begin{equation}\label{sheaf_seq1}
0\to \mathcal{H}^{-2}(B)\to\mathcal{H}^{-1}(A)\to E\to 0
\end{equation}
or
\begin{equation}\label{sheaf_seq}
0\to E\to \mathcal{H}^{-1}(B)\to\mathcal{H}^0(A)\to 0
\end{equation}
or
\begin{equation}\label{sheaf_seq2}
0\to \mathcal{H}^{-1}(A)\to E\to\mathcal{H}^{-1}(B)\to 0,
\end{equation}
where in the last sequence $\ch_0(\mathcal{H}^{-1}(B))=0$.

To do this take cohomology in $\coh^0(\mathbb{P}^3)$ of the exact sequence in display (\ref{destab_seq}). We use superscripts for this cohomology:
\[0\to B^{-1}\to A^0\to E[1]\to B^0\to 0,\]
where $B^i=\mathcal{H}^{i}_{\coh^0(\mathbb{P}^3)}(B)$ etc.
If $B^0\neq0$, then $E[1]$ is $\nu_{\alpha,0}$-stable with $\nu_{\alpha,0}(E[1])=\infty$, thus $\nu_{\alpha,0}(B^0)=\infty$. But then $\ch_1(B^0)=0$. Factoring the sequence above via $F\in\coh^0(\mathbb{P}^3)$ we have $\lambda_{\alpha,0,s}(B^{-1})\leq0$ and $\lambda_{\alpha,0,s}(A^0)>0$ (note that $A^0=A$ and so is non-zero) so that $\lambda_{\alpha,0,s}(F)>0$ contradicting the fact that $E[1]$ is $\lambda_{\alpha,0,s}$-semistable at that value of $\alpha$, This implies that $B^0=0$. 
So $B[-1]\in\coh^0(\mathbb{P}^3)$ while $A\in\coh^0(\mathbb{P}^3)$. Now consider the long exact sequence in $\coh(X)$:
\[0\to \mathcal{H}^{-2}(B)\to\mathcal{H}^{-1}(A)\to E\to\mathcal{H}^{-1}(B)\to\mathcal{H}^0(A)\to 0.\]
But $\Hom(\mathcal{H}^{-1}(A)[1],B[-1])=0$ and so we have an octahedron
\[\xymatrix@=1.5pc{&\mathcal{H}^{-1}(A)[1]\ar[d]\ar@{=}[r]&\mathcal{H}^{-1}(A)[1]\ar[d]\\
B[-1]\ar[r]\ar@{=}[d]& A\ar[d]\ar[r]& E[1]\ar[d]\\
B[-1]\ar[r]&\mathcal{H}^0(A)\ar[r]& D }\]
for some object $D$. Then the bottom triangle implies that $D\in\coh^0(\mathbb{P}^3)$ then the right vertical triangle implies $\ch_1(\mathcal{H}^{-1}(A))=0$. So $\ch_1(B)=0=\ch_1(A)$ and then we have an unbounded wall. To avoid this, either $\mathcal{H}^0(A)=0$ or $\mathcal{H}^{-1}(A)=0$ ($D$=0 is not an option as then $E[1]\to B$ would be zero). If $\mathcal{H}^0(A)=0$ then the kernel of the sheaf map $E\to \mathcal{H}^{-1}(B)$ is also a destabilizer and either the rank or the first Chern class of $\mathcal{H}^{-1}(B)$ is zero. In the latter case we would have an unbounded wall unless $\mathcal{H}^{-1}(B)=0$ and then we obtain the sequence in display (\ref{sheaf_seq1}). In the former case, we get to the sequence in display (\ref{sheaf_seq2}) after we replace $A$ with its image in $E$ and then $B$ with the image of $E[1]\to B$. Otherwise we must have $\mathcal{H}^{-1}(A)=0$ and then $\mathcal{H}^{-2}(B)=0$ which establishes the sequence in display (\ref{sheaf_seq}).

We now show that we can choose $A$ and $B[-1]$ to be $\nu_0$-semistable. Suppose first that $A$ is not $\nu_0$-semistable. Then there is a short exact sequence $0\to K\to A\to Q\to0$ in $\coh^0(\mathbb{P}^3)$ with $\nu(K)>\nu(A)>\nu(Q)$. We may assume $K$ is $\nu$-stable. We now have a 3x3 diagram in $\coh^0(\mathbb{P}^3)$:
\[\xymatrix@=2pc{D\ar@{^{(}->}[r]\ar@{^{(}->}[d]&K\ar@{->>}[r]\ar@{^{(}->}[d]&G\ar@{^{(}->}[d]\\
B^{-1}\ar@{^{(}->}[r]\ar@{->>}[d]&A\ar@{->>}[r]\ar@{->>}[d]&E[1]\ar@{->>}[d]\\
I\ar@{^{(}->}[r]&Q\ar@{->>}[r]&F}\]
where $G\neq0$ because $\nu_0^+(B[-1])\leq0$ and $\nu_0(K)>0$. But now it follows that $D[1],I[1]\in\mathcal{A}^{\alpha,0}$. Then from the $\lambda_{\alpha,0}$-semistability of $A$, $B$ and $E[1]$ it follows that $G\to E[1]\to F$ would destabilize $E[1]$. This contradicts the above argument. So $F=0$. Then $K\to E[1]\to D$ would still destabilize $E[1]$.

On the other hand, if $B^{-1}$ is not $\nu_{0}$-semistable then we have a 3x3 diagram in $\coh^0(\mathbb{P}^3)$:
\[\xymatrix@=2pc{K\ar@{=}[r]\ar@{^{(}->}[d]&K\ar@{^{(}->}[d]\\
B^{-1}\ar@{^{(}->}[r]\ar@{->>}[d]&A\ar@{->>}[r]\ar@{->>}[d]&E[1]\ar@{=}[d]\\
Q\ar@{^{(}->}[r]&D\ar@{->>}[r]&E[1]}\]
This gives the long exact sequence in $\mathcal{A}^{\alpha,0}$:
\[0\to \mathcal{H}_\mathcal{A}^0(Q)\to D\to E[1]\to \mathcal{H}_{\mathcal{A}}^1(Q)\to0. \]
As before the $\lambda_{\alpha,0,s}$-slopes of these are all equal and then the argument above implies that $\mathcal{H}_{\mathcal{A}}^1(Q)=0$. But $\nu(Q)<0$ and so $Q\not\in\mathcal{A}$, a contradiction. 

We now look first for psuedo-walls. Then the constraints which $A$ and $B$ must satisfy are:
\begin{align}
\label{Abog} x^2\geq ry,\quad x^2\geq&(y-2)(r+2) &\text{Bogomolov inequalities}\\
\label{Arank}\alpha^2 r<y<2+&\alpha^2(r+2)& 0<\ch_2^{\alpha,0}(A)<\ch_2^{\alpha,0}(E[1])\\
\label{gen_bog1}\alpha^2(x^2-ry)+y^2&\geq xz&\text{generalized Bogomolov for $A$}\\
%\label{gen_bog1}K(x^2-ry)+y^2&\geq xz&\text{generalized Bogomolov for $A$}\\
\label{gen_bog2}\alpha^2(x^2-(y-2)(r+2))+&(2-y)^2\geq xz &\text{generalized Bogomolov for $B[1]$}\\
%\label{gen_bog2}K(x^2-(y-2)(r+2))+&(2-y)^2\geq xz &\text{generalized Bogomolov for $B[1]$}\\
\label{alpha_is}\alpha^2=\frac{z}{x(6s+1)}& &\text{$\lambda_{\alpha,0,s}$-destabilizing condition}
\end{align}
We now assume we are in situation of display (\ref{sheaf_seq}) above. The other cases (\ref{sheaf_seq1}) and (\ref{sheaf_seq2}) together is similar (by switching $A$ and $B$ and the signs of the Chern character or by dualizing) and we leave it to the reader.
Then $r\geq0$ and $x>0$ as $\mu_-(\mathcal{H}^{-1}(B))>0$. Then from (\ref{alpha_is}) we have $z>0$. 
Note that the wall corresponding to $u=-\ch(\ox(1))$ has $\alpha^2=1/(6s+1)$ as described above. For now assume that $\alpha^2\geq 1/(6s+1)$ so we are looking for walls above this \emph{monad} wall. We now show that $r\neq0$. Suppose $r=0$. We can re-arrange (\ref{gen_bog2}) to
\[(y-2)(2\alpha^2+2-y)\leq \alpha^2x^2-xz=x\alpha^2(x-x(6s+1))=-6s\alpha^2x^2<0.\]
From (\ref{Arank}), it follows that $0<y<2$ and so $y=1$. Then from (\ref{gen_bog1}) we have 
$1\geq 6s\alpha^2x^2$. It follows that $x^2\leq \dfrac{6s+1}{6s}<2$ when $s>1/6$. Then $x=1$ and $\alpha^2\leq 1/(6s)$. Finally, from (\ref{gen_bog1}) again $z\leq \alpha^2+1\leq 1+1/6s<2$ and so $z=1$. This gives a wall corresponding to $A=\mathcal{O}_H(1)$ from the twisted structure sheaf of a hyperplane. This exists because $\Ext^1(\mathcal{O}_H(1),E)\to \Ext^1(\ox(1),E)$ surjects and the latter group is one dimensional. The extension class gives a rank 2 reflexive sheaf $B[-1]=T\mathbb{P}^3(-1)/\ox\cong S_p(2)$, where $S_p$ is the sheaf defined by the exact triangle
$$
3\mathcal{O}_{\mathbb{P}^3}(-2)\rightarrow S_p\rightarrow \mathcal{O}_{\mathbb{P}^3}(-3)[1].
$$
This goes through $\alpha^2=1/(6s+1)$ and gives a well defined wall on one side of $\beta=0$ depending on the sign of $s-1/3$. More precisely, it is well defined for $\beta\leq0$ when $s\ge 1/3$.

Now we suppose that $r>0$. From (\ref{Arank}) we have $\alpha^2<y/r$ and combining with (\ref{gen_bog1}) we have 
\[\frac{y}{r}(x^2-ry)\geq xz-y^2. \]
This gives $xy\geq rz$. From (\ref{Abog}) we have $x^3\geq rxy\geq r^2z\geq r^2x$ since $\alpha^2\geq 1/(6s+1)$. Then $x\geq r$. 

Finally, (\ref{gen_bog2}) gives us
\[(y-2)\bigl(\alpha^2(r+2)-(y-2)\bigr)\leq -xz\left(\frac{6s}{6s+1}\right)\]
The second factor on the LHS is positive by (\ref{Arank}) so that we conclude that $0<y< 2$. So $y=1$. Then from $z\geq x\geq rz$ we deduce that $r=1$. Now from (\ref{gen_bog1}), we have $1-\alpha^2\geq xz\left(\dfrac{6s}{6s+1}\right)$ and then $\alpha^2\geq 1/(6s+1)$ gives $1\geq xz>0$ so that $x=z=1$ as well. This is precisely the instanton wall. 

Note that the dual of the rank zero wall gives a third solution in the form (\ref{sheaf_seq2}) with Chern character $(0,0,1,-1)$. But this does not correspond to an actual wall as the image of $E$ in $\mathcal{O}_H$ is $\mathcal{I}_{p/H}$ which can be seen by dualizing  $E[1]\to S_p(2)[1]\to \mathcal{O}_H(1)$:
\[0 \to S_p(1)\to E\to\mathcal{O}_H\to\mathcal{O}_p\to 0.\]
So there is no actual wall (although there is a numerical wall).
\end{proof}
\end{confidential}

\begin{confidential}
\begin{remark}
This proof shows that the only wall with $r>0$ is the monad wall for all $s>0$. But when $s$ is small, there may be further rank $0$ walls; these would correspond to torsion sheaf quotients $T$ of $\ox(1)$ for which $\Ext^1(E,T)\neq0$. These can exist, and would correspond to a vanishing wall above the monad wall.\hfill\qed
\end{remark}
\end{confidential}

%%%%%%%%%%%%%%%%%%%%%%%%%%%%%%%%%%%%%%%%%%%%%%%%%%%%%%%%%%%%%%%%%%%%%%%%%%%%%%%%%%%%%%%%%%%%%%%%%%%%%%%%%%%%%%%%%%%%%%%%%%%%%%%%%%%%%%%%%%%%%%%%%%%%%%%%%%%%%%%%%%%%%%%%%%%%%%%%%%%%%%%%%%%%

\section{Bridgeland walls for $\ch^{\beta}=(-R,0,D,0)$}\label{sec:bridgeland_walls}

For a fixed value of the parameter $\beta$, we consider the two-dimensional cone of Bridgeland stability conditions 
$$
\sigma_{\alpha, \beta, s}=(Z_{\alpha,\beta,s},\mathcal{A}^{\alpha,\beta}),\   \alpha,s>0.
$$
A Bridgeland wall for the (twisted) Chern character vector $v=\ch^{\beta}=(-R,0,D,0)$, where $R\geq 0$ and $D> 0$, is produced by a short exact sequence
$$
0\rightarrow A\rightarrow E\rightarrow B\rightarrow 0
$$
in the category $\mathcal{A}^{\alpha,\beta}$ such that $\ch^{\beta}(E)=v$ and
$$
\lambda_{\alpha,\beta,s}(A)=\lambda_{\alpha,\beta,s}(E)=0.
$$
%\begin{figure}[h]
  %\includegraphics[scale=0.3]{walls.pdf}
%\end{figure}
A first observation is that for this choice of invariants, the walls properly intersecting the $(\alpha,s)$--plane are disjoint. Indeed, such walls are given by the equations
$$
\left(s+\frac{1}{6}\right)\alpha^2=\frac{\ch^{\beta}_3(A)}{\ch^{\beta}_1(A)}.
$$
As a consequence we obtain the following special case of Bertram's Lemma:
\begin{lemma}[Bertram's Lemma] Assume that there is a short exact  sequence 
$$
0\rightarrow A \rightarrow E \rightarrow B \rightarrow 0
$$ 
in $\mathcal{A}^{\alpha_0,\beta}$ destabilizing an object $E\in\coh^{\beta}(\mathbb{P}^3)$ with $\ch^{\beta}(E)=(-R,0,D,0)$ with respect to the stability condition $\sigma_{\alpha_0,\beta,s_0}$. If $\ch_1^{\beta}(A)\neq 0$ then $A$ destabilizes $E$ for all $(\alpha,s)\in W_{A,B}=\{(\alpha,s)\colon \lambda_{\alpha,\beta,s}(A)=0\}$.
\end{lemma}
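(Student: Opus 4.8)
The plan is to prove that the numerical wall $W_{A,B}$ is an actual wall for $E$ at every one of its points, i.e. that the given triangle $A\to E\to B$ remains a short exact sequence of equal-slope objects in $\mathcal{A}^{\alpha,\beta}$ for all $(\alpha,s)\in W_{A,B}$. The equal-slope part is automatic here: since $\ch^\beta(E)=(-R,0,D,0)$ has $\ch_1^\beta(E)=\ch_3^\beta(E)=0$, the numerator of $\lambda_{\alpha,\beta,s}(E)$ vanishes identically, so $\lambda_{\alpha,\beta,s}(E)=0$ for every $(\alpha,s)$, and by the definition of $W_{A,B}$ we have $\lambda_{\alpha,\beta,s}(A)=0$ there, whence also $\lambda_{\alpha,\beta,s}(B)=0$ because $\ch^\beta(B)=\ch^\beta(E)-\ch^\beta(A)$. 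Moreover $\nu_{\alpha,\beta}(E)=+\infty$, so $E\in\mathcal{T}_{\alpha,\beta}\subset\mathcal{A}^{\alpha,\beta}$ for all $\alpha>0$. Thus everything reduces to the single claim that $A,B\in\mathcal{A}^{\alpha,\beta}$ along the whole arc $W_{A,B}$.

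First I would pin down the cohomology of $A$ and $B$. Taking $\coh^{\beta}$-cohomology of $A\to E\to B$ and using $E\in\coh^{\beta}$ forces $\calh^{-1}_\beta(A)=0$, so $A\in\coh^{\beta}$ and hence $A\in\mathcal{T}_{\alpha_0,\beta}$ with $\ch_1^\beta(A)>0$; the rest fits into $0\to\calh^{-1}_\beta(B)\to A\to E\to\calh^0_\beta(B)\to0$ in $\coh^{\beta}$, so $B\in\mathcal{A}^{\alpha,\beta}$ is equivalent to $\calh^{-1}_\beta(B)\in\mathcal{F}_{\alpha,\beta}$ and $\calh^0_\beta(B)\in\mathcal{T}_{\alpha,\beta}$. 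The key observation is that on $W_{A,B}$ one has $\operatorname{Re}Z_{\alpha,\beta,s}(A)=0$, so $Z_{\alpha,\beta,s}(A)$ is purely imaginary; since $Z$ vanishes on no nonzero object of the heart or its shift, any $\nu_{\alpha,\beta}$-semistable object of $\lambda$-slope $0$ on $W_{A,B}$ must have $\operatorname{Im}Z_{\alpha,\beta,s}\neq0$, and its membership in $\mathcal{A}^{\alpha,\beta}$ is then dictated purely by the sign of this imaginary part. It is also worth noting that every $\lambda_{\alpha_0,\beta,s_0}$-slope-$0$ subobject of $E$ has the same ratio $\ch_3^\beta/\ch_1^\beta=(s_0+\tfrac16)\alpha_0^2$, so one may freely replace $A$ by a $\lambda$-stable Jordan--H\"older factor of $E$ without leaving $W_{A,B}$.

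I would then propagate heart-membership by an open--closed argument on the connected arc $W_{A,B}$. Let $S\subset W_{A,B}$ be the locus on which $A,B\in\mathcal{A}^{\alpha,\beta}$; it contains $(\alpha_0,s_0)$ and is open, because the defining tilt inequalities $\nu^-_{\alpha,\beta}(\,\cdot\,)>0$ and $\nu^+_{\alpha,\beta}(\,\cdot\,)\le0$ are open conditions in the parameters. In the favourable case, where $A$ and the two cohomology sheaves of $B$ stay tilt-semistable along $W_{A,B}$, closedness is immediate from the previous paragraph: their imaginary central charges cannot cross $0$ (a zero would combine with $\operatorname{Re}Z=0$ to give $Z=0$ on a nonzero semistable object), so their signs are locked and the three objects never leave the heart. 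Connectedness of $W_{A,B}$ then yields $S=W_{A,B}$, which is the assertion of the lemma.

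The hard part is exactly the hypothesis in that favourable case: ruling out that $A$, $\calh^{-1}_\beta(B)$ or $\calh^0_\beta(B)$ crosses a \emph{tilt}-wall as $(\alpha,s)$ travels along $W_{A,B}$, since a tilt-destabilizing sub- or quotient object of the same $\nu$-slope need not share the $\lambda$-slope $0$ and so is not directly controlled by the $\lambda$-walls. This is the first-tilt analogue of Bertram's nested-wall theorem; for $\beta$ fixed these tilt-walls occur at isolated values of $\alpha$, and the plan is to invoke local finiteness of walls together with the disjointness of the $\lambda$-walls---the mutually non-intersecting level curves $(s+\tfrac16)\alpha^2=\ch_3^\beta(\,\cdot\,)/\ch_1^\beta(\,\cdot\,)$---to show that no such crossing can push a factor out of the heart at an interior point of $W_{A,B}$, after the reduction to $\lambda$-stable factors indicated above. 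The few degenerate configurations in which $\operatorname{Im}Z(B)=0$ (an infinite-slope quotient) are disposed of separately and do not disturb the sign analysis.
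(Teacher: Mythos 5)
Your preliminary reductions are correct and coincide with the paper's: $\lambda_{\alpha,\beta,s}(E)\equiv 0$, so equal slopes along $W_{A,B}$ are automatic; $E\in\mathcal{T}_{\alpha,\beta}$ for every $\alpha$; taking $\coh^{\beta}$-cohomology gives $\mathcal{H}^{-1}_{\beta}(A)=0$, hence $A\in\coh^{\beta}(\mathbb{P}^3)$, and $\mathcal{H}^{0}_{\beta}(B)\in\mathcal{T}_{\alpha,\beta}$ for all $(\alpha,s)$ because it is a quotient of $E$ and $\mathcal{T}_{\alpha,\beta}$ is closed under quotients. So the lemma reduces to keeping $A\in\mathcal{T}_{\alpha,\beta}$ and $\mathcal{H}^{-1}_{\beta}(B)\in\mathcal{F}_{\alpha,\beta}$ along the whole of $W_{A,B}$. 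This is exactly what you call ``the hard part,'' and your write-up does not prove it: you defer it to a ``plan'' of invoking local finiteness of walls and disjointness of the $\lambda$-walls, with no mechanism for how a hypothetical tilt-wall crossing would actually be contradicted. Your sign-locking argument (that $\operatorname{Im}Z$ cannot vanish while $\operatorname{Re}Z=0$) only constrains objects whose $\lambda$-slope is $0$ on the wall, i.e.\ $A$, $B$, $E$ themselves; the objects that threaten heart membership are a subobject $F_1\subseteq\mathcal{H}^{-1}_{\beta}(B)$ or a tilt-HN quotient $A''$ of $A$ whose $\nu_{\alpha,\beta}$-slope crosses $0$, and these need not satisfy $\operatorname{Re}Z=0$ --- as you yourself concede. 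Moreover, the openness half of your open--closed scheme fails as stated: $\mathcal{H}^{-1}_{\beta}(B)\in\mathcal{F}_{\alpha,\beta}$ is the condition $\nu^{+}_{\alpha,\beta}\le 0$, which is a closed condition in the parameters, not an open one, so the locus $S$ is not obviously open and the connectedness argument does not get started.

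The paper closes precisely this gap with a first-failure (infimum) argument along $W_{A,B}$, and this is the idea missing from your proposal. Disjointness of the walls for $\ch^{\beta}(E)$ guarantees that $E$ remains $\lambda_{\alpha,\beta,s}$-semistable at every point of $W_{A,B}$. If $\mathcal{H}^{-1}_{\beta}(B)$ ever left $\mathcal{F}_{\alpha,\beta}$, then at the infimum $(\alpha_1,s_1)$ of the failure locus there would be a subobject $F_1$ with $\nu_{\alpha_1,\beta}(F_1)=0$, and the quotient $B/F_1[1]$ of $E$ would violate the semistability of $E$ --- unless $A$ itself had already left $\mathcal{A}^{\alpha,\beta}$ at some $(\alpha_2,s_2)$ with $\alpha_2<\alpha_1$. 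At that earlier infimum, $A$ admits a tilt-semistable quotient $A''$ in $\coh^{\beta}(\mathbb{P}^3)$ with $\nu_{\alpha_2,\beta}(A'')=0$; one then checks that $\lambda_{\alpha,\beta,s}(A'')\to-\infty$ as $(\alpha,s)\to(\alpha_2,s_2)$ along the wall, so the kernel $A'\subset A\subset E$ is a subobject of $E$ in $\mathcal{A}^{\alpha,\beta}$ with $\ch_3^{\beta}(A')-\left(s+\frac{1}{6}\right)\alpha^2\ch_1^{\beta}(A')>0$ near $(\alpha_2,s_2)$, contradicting the semistability of $E$ along $W_{A,B}$. In short: the exit of any factor from the heart manufactures a $\lambda$-destabilizer of $E$ on the wall itself, and it is this manufacture --- not local finiteness --- that your plan would need to carry out.
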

\begin{proof} We only need to prove that $A$, $E$, and $B\in\mathcal{A}^{\alpha,\beta}$ for all $(\alpha,s)\in W_{A,B}$. Since $\ch^{\beta}_1(E)=0$ and $E\in\coh^{\beta}(\mathbb{P}^3)$, then $E\in\mathcal{A}^{\alpha,\beta}$ for all $(\alpha,s)\in W_{A,B}$. Moreover, $E$ must be Bridgeland semistable along $W_{A,B}$ since the walls for $\ch^{\beta}(E)$ are disjoint.

Now, the long exact sequence of $\coh^{\beta}$--cohomologies gives
$$
0\rightarrow \mathcal{H}^{-1}_{\beta}(A)\rightarrow 0 \rightarrow \mathcal{H}^{-1}_{\beta}(B)\rightarrow A \rightarrow E \rightarrow \mathcal{H}^0_{\beta}(B)\rightarrow 0,
$$ 
implying $A\in\coh^{\beta}(\mathbb{P}^3)$. 

Because the category $\mathcal{T}_{\alpha,\beta}$ is closed under quotients then $\mathcal{H}_{\beta}^0(B)\in\mathcal{T}_{\alpha,\beta}$ for all $(\alpha,s)\in W_{A,B}$. Thus to check that $B\in\mathcal{A}^{\alpha,\beta}$ it is enough to check that $\mathcal{H}^{-1}_{\beta}(B)\in\mathcal{F}_{\alpha,\beta}$. If the set
$$
\{(\alpha,s)\in W_{A,B}| \ \mathcal{H}^{-1}_{\beta}(B)\notin \mathcal{F}_{\alpha,\beta}\ \text{for some}\ \alpha>\alpha_0\}
$$
is nonempty, then it has a well defined infimum $(\alpha_1,s_1)$. By definition of infimum, $\mathcal{H}^{-1}_{\beta}(B)$ has a subobject $F_1$ in $\coh^{\beta}(\mathbb{P}^3)$ with $\nu_{\alpha_1,\beta}(F_1)=0$. Thus, the quotient $B/F_1[1]$ in $\coh^{\beta}(\mathbb{P}^3)$ will destabilize $E$ as long as $A\in \mathcal{A}^{\alpha_1,\beta}$. Therefore the set
$$
\{(\alpha,s)\in W_{A,B}| \ A\notin \mathcal{A}^{\alpha,\beta}\ \text{for some}\ \alpha>\alpha_0\}
$$
is nonempty and has a well-defined infimum $(\alpha_2,s_2)$ with $\alpha_2<\alpha_1$. Thus $A$ fits into an exact sequence 
$$
0\rightarrow A'\rightarrow A \rightarrow A''\rightarrow 0
$$
in $\coh^{\beta}(\mathbb{P}^3)$ with $A''$ tilt--semistable of tilt $\nu_{\alpha_2,\beta}(A'')=0$. Therefore,
$$
\lim_{\footnotesize{\begin{matrix}(\alpha,s)\rightarrow (\alpha_2,s_2)\\ \alpha_0<\alpha<\alpha_2\\ (\alpha,s)\in W_{A,B}\end{matrix}}}\lambda_{\alpha,\beta,s}(A'')=-\infty.
$$
As a consequence, for $(\alpha,s)\in W_{A,B}$ close to $(\alpha_2,s_2)$ with $\alpha_0<\alpha<\alpha_2$ we will have $A'$ as a subobject of $E$ in $\mathcal{A}^{\alpha,\beta}$ with
$$
\ch^{\beta}_3(A')-\left(s+\frac{1}{6}\right)\alpha^2\ch^{\beta}_1(A')>0,
$$
contradicting the Bridgeland semistability of $E$ along $W_{A,B}$.

Similarly, one proves that $A,B\in\mathcal{A}^{\alpha,\beta}$ for every $0<\alpha<\alpha_0$.
\end{proof}

%%%%%%%%%%%%%%%
\subsection{Limit semistable objects}
The following lemma was proven in \cite{BMS}, it describes the type of objects that remain $\sigma_{\alpha,\beta,s}$--semistable as the parameter $s$ grows.
\begin{lemma}[{\cite[Lemma 8.9]{BMS}}]\label{limit_semistable_objects} If $E\in\mathcal{A}^{\alpha,\beta}$ is $\sigma_{\alpha,\beta,s}$--semistable for all $s\gg 0$, then one of the following conditions holds.
\begin{enumerate}
\item $E=\mathcal{H}^0_{\beta}(E)$ is $\nu_{\alpha,\beta}$--semistable object.
\item $\mathcal{H}^{-1}_{\beta}(E)$ is $\nu_{\alpha,\beta}$--semistable and $\mathcal{H}^0_{\beta}(E)$ is either $0$ or a sheaf supported in dimension 0.
\end{enumerate}
\end{lemma}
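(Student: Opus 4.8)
The plan is to exploit the elementary fact that, for any $A\in\cohab$, the quantity $\ch_2^{\beta}(A)-\tfrac{\alpha^2}{2}\ch_0^{\beta}(A)$ — the imaginary part of $Z_{\alpha,\beta,s}(A)$ and the denominator of $\labs(A)$ — is independent of $s$, whereas the numerator $\ch_3^{\beta}(A)-(s+\tfrac16)\alpha^2\ch_1^{\beta}(A)$ is affine in $s$ with leading coefficient $-\alpha^2\ch_1^{\beta}(A)$. Since every nonzero object of $\coh^{\beta}(X)$ has $\ch_1^{\beta}\ge0$, with $\ch_1^{\beta}=0$ exactly when $\nuab=+\infty$, I would first record the asymptotics as $s\to\infty$: a nonzero $F\in\free{\alpha,\beta}$ must have $\ch_1^{\beta}(F)>0$ (otherwise $\nuab(F)=+\infty\not\le0$), hence $\labs(F[1])\to+\infty$; a nonzero $T\in\tors{\alpha,\beta}$ with $\ch_1^{\beta}(T)>0$ has $\labs(T)\to-\infty$, while one with $\ch_1^{\beta}(T)=0$ has $\labs(T)$ constant in $s$, equal to $+\infty$ precisely when $\ch_2^{\beta}(T)=\tfrac{\alpha^2}{2}\ch_0^{\beta}(T)$. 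Dividing numerator and denominator by $s$ then gives the only inputs needed below: for $F,F'\in\free{\alpha,\beta}$ with $\nuab(F)>\nuab(F')$ one has $\labs(F[1])>\labs(F'[1])$ for all $s\gg0$, and likewise for two objects of $\tors{\alpha,\beta}$, with the obvious conventions when a slope is $+\infty$.

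Writing $\tilde E:=\mathcal H^{-1}_{\beta}(E)\in\free{\alpha,\beta}$ and $\bar E:=\mathcal H^{0}_{\beta}(E)\in\tors{\alpha,\beta}$, I would then use the canonical triangle $\tilde E[1]\to E\to\bar E$. If $\tilde E\ne0$ and $\bar E\ne0$ this is a nonsplit short exact sequence in $\cohab$, so $\sigma_{\alpha,\beta,s}$-semistability of $E$ on a common interval $(s_0,\infty)$ forces $\labs(\tilde E[1])\le\labs(E)\le\labs(\bar E)$ there (in the extended reals, i.e.\ in terms of the phases of $Z_{\alpha,\beta,s}$). Since $\labs(\tilde E[1])\to+\infty$, one must have $\labs(\bar E)=+\infty$ for $s\gg0$, hence $\ch_1^{\beta}(\bar E)=0$ and $\ch_2^{\beta}(\bar E)=\tfrac{\alpha^2}{2}\ch_0^{\beta}(\bar E)$. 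Writing $\bar E$, as an object of $\coh^{\beta}(X)$ with $\ch_1^{\beta}=0$, as an extension $V[1]\to\bar E\to Z$ with $V\in\free{\beta}$ torsion free and $\mu_{\beta}(V)=0$ and $Z$ a sheaf of dimension $\le1$, the classical Bogomolov inequality applied to the $\mu$-semistable sheaf $V$ gives $\ch_2^{\beta}(V)\le0$; combined with $\ch_2(Z)\ge0$, $\ch_0(V)\ge0$ and the identity $-\ch_2^{\beta}(V)+\ch_2(Z)+\tfrac{\alpha^2}{2}\ch_0(V)=0$, this forces $V=0$ and $\ch_2(Z)=0$, i.e.\ $\bar E$ is a $0$-dimensional sheaf. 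When $\bar E=0$ this is trivial, and when $\tilde E=0$ we are immediately in case (1).

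It then remains to prove the $\nuab$-semistability assertions. If $\tilde E=0$, then $E=\bar E\in\coh^{\beta}(X)\cap\tors{\alpha,\beta}$; were it not $\nuab$-semistable, its maximal $\nuab$-destabilizing subobject $F\subsetneq E$ in $\coh^{\beta}(X)$ would again lie in $\tors{\alpha,\beta}$ (quotients of a $\nu$-semistable object of positive slope have positive slope), giving a monomorphism $F\hookrightarrow E$ in $\cohab$ with $\nuab(F)>\nuab(E)$, so that the comparison of the first step yields $\labs(F)>\labs(E)$ for $s\gg0$, a contradiction; hence $E$ is $\nuab$-semistable, which is case (1). If $\tilde E\ne0$, I claim $\tilde E$ is $\nuab$-semistable: otherwise its maximal $\nuab$-destabilizing subsheaf $G\subsetneq\tilde E$ and the quotient $\tilde E/G$ both lie in $\free{\alpha,\beta}$ (the quotient because all its $\nu$-Harder--Narasimhan slopes are strictly below $\nuab(G)=\nu_+(\tilde E)\le0$), so $G[1]\hookrightarrow\tilde E[1]\hookrightarrow E$ in $\cohab$; since $\nuab(G)>\nuab(\tilde E)$, the first step gives $\labs(G[1])>\labs(\tilde E[1])$ for $s\gg0$, and because $\bar E$ is $0$-dimensional one has $Z_{\alpha,\beta,s}(E)=Z_{\alpha,\beta,s}(\tilde E[1])-\ch_3^{\beta}(\bar E)$, so $\labs(E)$ and $\labs(\tilde E[1])$ differ by a bounded amount; hence $\labs(G[1])>\labs(E)$ for $s\gg0$, again a contradiction. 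This places us in case (2) and finishes the proof.

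The main obstacle, and the point requiring the most care, is the asymptotic slope comparison of the first paragraph, carried out uniformly over a full interval $(s_0,\infty)$ rather than merely along a sequence, together with the bookkeeping that guarantees the destabilizing subobjects produced at the level of tilt-stability (resp.\ in $\coh^{\beta}(X)$) are honest subobjects in $\cohab$: one must keep track of the cases where some $\ch_1^{\beta}$ vanishes (so $\labs=+\infty$ rather than a finite limit), where the leading terms in $s$ cancel, and where the relevant object has been shifted by $[1]$, and check in each case that the strict slope inequality survives for all sufficiently large $s$.
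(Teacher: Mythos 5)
The paper does not actually prove this lemma---it is imported verbatim from \cite[Lemma 8.9]{BMS}---and your reconstruction is correct and follows essentially the same route as that source: the large-$s$ asymptotics of $\labs$ governed by the sign of $\ch_1^{\beta}$, the torsion-pair decomposition of $E$ in $\mathcal{A}^{\alpha,\beta}$ and then of $\mathcal{H}^0_{\beta}(E)$ inside $\coh^{\beta}(X)$, and the classical Bogomolov inequality to force $\mathcal{H}^0_{\beta}(E)$ to be $0$-dimensional when $\mathcal{H}^{-1}_{\beta}(E)\neq 0$. Your handling of the infinite-slope edge cases and of why the tilt-destabilizers are genuine subobjects of $\mathcal{A}^{\alpha,\beta}$ is adequate, so I see no gap.
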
 
\begin{proposition}
If $\ch^{\beta}(E)=(-R,0,D,0)$ for some $R\geq 0$ and $D>0$ and $E$ is $\sigma_{\alpha,\beta,s}$--semistable for all $s\gg 0$ then $E\in\coh^{\beta}(\mathbb{P}^3)$. Moreover, if $R=0$ then $E$ is a Gieseker semistable sheaf.
\end{proposition}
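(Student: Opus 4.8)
The plan is to treat the two assertions separately and to reduce both to the structural Lemma~\ref{limit_semistable_objects}.

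For the first assertion I would apply Lemma~\ref{limit_semistable_objects} to $E$: in case (1) there is nothing to prove, since $E=\mathcal{H}^0_\beta(E)\in\coh^\beta(\mathbb{P}^3)$, so the point is to exclude case (2). In that case $\mathcal{H}^0_\beta(E)$ is $0$ or a sheaf supported in dimension $0$, whence $\ch^\beta_1(\mathcal{H}^0_\beta(E))=0$; since $\ch^\beta_1(E)=0$, the triangle $\mathcal{H}^{-1}_\beta(E)[1]\to E\to\mathcal{H}^0_\beta(E)$ forces $\ch^\beta_1(\mathcal{H}^{-1}_\beta(E))=0$, and therefore $\nu_{\alpha,\beta}(\mathcal{H}^{-1}_\beta(E))=+\infty$. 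But in case (2) $\mathcal{H}^{-1}_\beta(E)$ is $\nu_{\alpha,\beta}$-semistable, so if it is nonzero it lies in $\tors{\alpha,\beta}$, while $E\in\mathcal{A}^{\alpha,\beta}$ forces $\mathcal{H}^{-1}_\beta(E)\in\free{\alpha,\beta}$; as $\tors{\alpha,\beta}\cap\free{\alpha,\beta}=0$ we get $\mathcal{H}^{-1}_\beta(E)=0$, so $E=\mathcal{H}^0_\beta(E)$ would be $0$-dimensional, contradicting $\ch^\beta_2(E)=D>0$. Hence case (2) cannot occur and $E\in\coh^\beta(\mathbb{P}^3)$.

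For the second assertion, with $R=0$ we have $\ch_0(E)=\ch^\beta_1(E)=0$. I would first upgrade $E$ to an honest sheaf. Writing $\mathcal{H}^{-1}(E)\in\free\beta$ and $\mathcal{H}^0(E)\in\tors\beta$, the sign constraints $\ch^\beta_1\ge0$ on objects of $\tors\beta$ and $\ch^\beta_1\le0$ on the torsion-free sheaf $\mathcal{H}^{-1}(E)$, together with $\ch^\beta_1(E)=0$, force $\ch^\beta_1(\mathcal{H}^0(E))=\ch^\beta_1(\mathcal{H}^{-1}(E))=0$; the first equality makes $\mathcal{H}^0(E)$ a torsion sheaf of dimension $\le1$, hence of rank $0$, and then $\ch_0(E)=0$ gives $\ch_0(\mathcal{H}^{-1}(E))=0$, so $\mathcal{H}^{-1}(E)=0$. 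Thus $E$ is a sheaf with $\ch(E)=(0,0,D,\beta D)$, the last entry being forced by $\ch^\beta_3(E)=0$. It is pure of dimension $1$: a nonzero $0$-dimensional subsheaf $Z\into E$ would be a subobject of $E$ in $\mathcal{A}^{\alpha,\beta}$ of infinite $\labs$-slope (equivalently, of maximal phase), destabilizing $E$.

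It remains to deduce Gieseker semistability from $\labs$-semistability. For a nonzero subsheaf $F\into E$ with $\ch_2(F)>0$ — so $F$ is $1$-dimensional, hence a subobject of $E$ inside $\mathcal{A}^{\alpha,\beta}$ — one has $\ch^\beta_0(F)=\ch^\beta_1(F)=0$, so $\labs(F)=\ch^\beta_3(F)/\ch_2(F)=(\ch_3(F)-\beta\ch_2(F))/\ch_2(F)$ while $\labs(E)=0$; semistability of $E$ then yields $\ch_3(F)/\ch_2(F)\le\beta=\ch_3(E)/\ch_2(E)$, which is precisely the comparison of reduced Hilbert polynomials $p_F\le p_E$ (the additive constant coming from $\td_1(X)$ cancels). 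Together with purity, this shows $E$ is Gieseker semistable. I expect the only genuinely delicate point to be the bookkeeping between the two tilts — namely that sheaves of dimension $\le1$, which lie in $\tors\beta\cap\tors{\alpha,\beta}$, give rise to honest subobjects in $\mathcal{A}^{\alpha,\beta}$ through a short exact sequence of sheaves, and the correct handling of the $\nu_{\alpha,\beta}=+\infty$ objects in the first step; the remainder is a short Chern-character computation.
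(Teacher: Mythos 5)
Your proposal is correct and follows essentially the same route as the paper's proof: Lemma \ref{limit_semistable_objects} rules out the non-sheaf case via the contradiction $\ch_1^{\beta}(\mathcal{H}^{-1}_{\beta}(E))=0$ versus $\mathcal{H}^{-1}_{\beta}(E)\in\mathcal{F}_{\alpha,\beta}$, and for $R=0$ one reduces to a pure $1$-dimensional sheaf and translates $\lambda_{\alpha,\beta,s}(F)\le 0$ into $\ch_3(F)/\ch_2(F)\le\beta$. The only difference is that you spell out in more detail the step the paper merely asserts (that an object of $\coh^{\beta}(\mathbb{P}^3)$ with $\ch_0^{\beta}=\ch_1^{\beta}=0$ is a sheaf), which is a welcome addition rather than a deviation.
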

\begin{proof}
First, notice that if $E$ is not in $\coh^{\beta}(\mathbb{P}^3)$ then, by Lemma \ref{limit_semistable_objects}, $E$ fits into an exact sequence 
$$
0\rightarrow F[1] \rightarrow E \rightarrow T \rightarrow 0
$$
in $\mathcal{A}^{\alpha,\beta}$ with $F\in\mathcal{F}_{\alpha,\beta}$ a $\nu_{\alpha,\beta}$-semistable object and $T\in \mathcal{T}_{\alpha,\beta}$ a 0-dimensional sheaf. But this implies that $\ch_1^{\beta}(F)=0$ contradicting that $\nu_{\alpha,\beta}(F)\leq 0$. Thus $E\in \coh^{\beta}(\mathbb{P}^3)$ and $E$ is $\nu_{\alpha,\beta}$-semistable. 

Now, assume that $R=0$. Notice that in this case $E$ must be a sheaf since these are the only objects in $\coh^{\beta}(\mathbb{P}^3)$ with $\ch^{\beta}_0=\ch^{\beta}_1=0 $.

Since every short exact sequence of sheaves
$$
0\rightarrow A\rightarrow E\rightarrow B\rightarrow0
$$
stays exact in $\mathcal{A}^{\alpha,\beta}$ then $E$ must be a pure 1-dimensional sheaf. Indeed, a 0-dimensional torsion subsheaf of $E$ would $\sigma_{\alpha,\beta,s}$-destabilize $E$ for all $s>0$. Thus, $\ch^{\beta}_2(A)\neq 0$ and 
$$
\lambda_{\alpha,\beta,s}(A)\leq 0
$$
due to the semistability of $E$. The condition on the Bridgeland slope of $A$ is equivalent to
$$
\frac{\ch_3(A)}{\ch_2(A)}\leq \beta=\frac{\ch_3(E)}{\ch_2(E)},
$$
and so $E$ is Gieseker semistable.
\end{proof}

%%%%%%%%%%%%%%%
\subsection{Destabilizing objects with $\ch^{\beta}_1=0$}
As mentioned before, if $A$ is a destabilizing subobject in $\mathcal{A}^{\alpha,\beta}$ of an object $E\in\coh^{\beta}(\mathbb{P}^3)$ of invariants $\ch^{\beta}(E)=(-R,0,D,0)$, then $A\in\coh^{\beta}(\mathbb{P}^3)$ and therefore $\ch^{\beta}_1(A)\geq 0$. In the case when $\ch^{\beta}_1(A)=0$ then $A$ would make $E$ properly semistable for every $\alpha,s>0$ for which $E$ is semistable. 

Assume that $E$ is $\lambda_{\alpha,\beta,s}$-semistable for $s\gg 0$ (or equivalently for $\alpha\gg 0$). Notice that if $\ch^{\beta}_1(A)=0$ and $\lambda_{\alpha,\beta,s}(A)=0$ then $\ch^{\beta}_3(A)=0$. Moreover, $A$ is semistable for $s\gg0$ since otherwise every destabilizing subobject of $A$ would also destabilize $E$. Thus $-rd\geq 0$ because such $A$ would be $\nu_{\alpha,\beta}$-semistable. 

On the other hand, since $A$ is a subobject of $E$ in $\mathcal{A}^{\alpha,\beta}$ then
$$
0<d-\frac{\alpha^2}{2}r\leq D+\frac{\alpha^2}{2}R.
$$
Taking limits as $\alpha\rightarrow 0$ and $\alpha\rightarrow \infty$ we obtain
\begin{eqnarray}
& &0\leq d\leq D,\\
& & 0\leq -r\leq R.
\end{eqnarray}
In the particular case that $R=0$ we would get $r=0$ and so $A$ would be an object with Chern character $\ch^{\beta}(A)=(0,0,d,0)$ that is semistable for all $s\gg 0$, i.e., a Gieseker semistable sheaf destabilizing $E$ with respect to Gieseker semistability.

%%%%%%%%%%%%%%%

\subsection{Bounding destabilizing objects with $\ch^{\beta}_1> 0$}

Recall that if $E$ is a limit semistable object of Chern character $(-R,0,D,0)$ then the vertical $\alpha$-walls are finite when $s>1/3$ and so the walls destabilizing $E$ in the complete $(\alpha,s)$-slice are also finite. This implies the finiteness of the vertical $\alpha$-walls for $E$ for every $s>0$.

In what follows we will run an algorithm in order to restrict the Chern characters of possible destabilizing subobjects of a limit semistable object $E$ and prove the finiteness of walls for the Chern character $(-R,0,D,0)$ in the $(\alpha,s)$-slice.

Suppose that we have a destabilizing sequence in $\mathcal{A}^{\alpha,\beta}$
$$
0\rightarrow A\rightarrow E\rightarrow B\rightarrow 0
$$
where $\ch(A)=(r,c,d,e)$ with $c>0$ and $\ch(E)=(-R,0,D,0)$ with $R\geq 0,\ D>0$, and assume both $A,E\in\coh^{\beta}(\mathbb{P}^3)$. Under these assumptions we have
\begin{eqnarray}
0<d-\frac{\alpha^2}{2}r<D+\frac{\alpha^2}{2}R \label{category_condition}\\
Q_{\alpha,\beta,K}(A)\geq 0 \label{Q_for_A}\\
Q_{\alpha,\beta,K}(B)\geq 0 \label{Q_for_B}\\
\left(s+\frac{1}{6}\right)\alpha^2=\frac{e}{c}.\label{wall_condition}
\end{eqnarray}

By Bertram's lemma we know that $A$ destabilizes $E$ for each $(\alpha,s)\in W_{A,B}$. Then taking the limit as $\alpha\rightarrow 0$ we obtain
\begin{equation}\label{bounds_for_d}
0\leq d\leq D
\end{equation}
Though the value of $\beta$ may seem arbitrary, it is a very specific one depending on the invariants of $E$. Indeed,
$$
\beta=\begin{cases}
\frac{\ch_1(E)}{R}& \text{if}\ R>0\\
\frac{\ch_3(E)}{D}& \text{if}\ R=0.
\end{cases}
$$
In particular, $r,c,d$ and $e$ are rational numbers with bounded denominators and so  inequality \eqref{bounds_for_d} tells us that there are only finitely many possibilities for $d$.

Now, from Lemma \ref{qform} we know that $0\leq Q_{\alpha,\beta,K}(A)\leq Q_{\alpha,\beta,K}(E)$, which translates into
\begin{equation}\label{Q_A_and_Q_E}
0\leq K\alpha^2(c^2-2rd)+4d^2-6ce\leq 2K\alpha^2 RD+4D^2.
\end{equation}
Fixing $K$, and using Bertram's lemma again we take the limit as $\alpha\rightarrow 0$ to obtain
\begin{equation}\label{bounds_for_ce}
0\leq 4d^2-6ce\leq 4D^2.
\end{equation}
On the other hand, the wall condition \eqref{wall_condition} implies that $e$ and $c$ have the same sign, and moreover, that $e,c>0$ since $A\in\coh^{\beta}(\mathbb{P}^3)$. Combining this with  \eqref{bounds_for_ce} we obtain
\begin{equation}\label{estimate_ce_1}
0<c(6e)\leq 4d^2.
\end{equation}
Therefore there are only finitely many possibilities for $c$ and $e$.

To bound $r$, we take the limit as $s\rightarrow 0$ in \eqref{wall_condition}, which is possible by Bertram's lemma, and replace the corresponding value of $\alpha^2$ in \eqref{category_condition} to obtain
\begin{equation}\label{inequality-rank}
-\frac{c(2D-2d)}{6e}-R\leq r\leq \frac{c(2d)}{6e}.
\end{equation}
Thus there are only finitely many possibilities for $\ch^{\beta}(A)$ and so only finitely many walls destabilizing limit semistable objects of Chern character $(-R,0,D,0)$ in the $(\alpha,s)$--plane.

These estimates can actually be improved further. Indeed, plugging in the wall equation \eqref{wall_condition} into \eqref{Q_for_B} and taking $K=1$ we obtain
\begin{equation}\label{Q(B)}
(d-D)\left(2\alpha^2(R+r)+4(D-d)\right)\leq -6ec\left(\frac{6s}{6s+1}\right)<0.
\end{equation}
This inequality combined with inequalities \eqref{Q_A_and_Q_E} and \eqref{bounds_for_d} give
\begin{equation}\label{sharp-d}
0<2d<2D.
\end{equation}
On the other hand, taking $\alpha\rightarrow 0$ and $s\rightarrow \infty$ in \eqref{Q(B)} we obtain
$$
0< c(6e)\leq 4(D-d)^2,
$$
and so 
\begin{equation}\label{estimate-min}
0<c(6e)\leq \min\left\{4d^2,4(D-d)^2\right\}.
\end{equation}

\section{The case $X=\mathbb{P}^3$ and $\beta=0$}\label{sec:beta=0}
For the remaining of the paper we will focus on the case $X=\mathbb{P}^3$ and $\beta=0$. Notice that not every numerical solution to inequalities \eqref{inequality-rank},  \eqref{sharp-d}, and  \eqref{estimate-min} corresponds to an actual wall. For instance, the vector $w=(r,c,d,e)$ should be the Chern character of an object in $\mathcal{A}^{\alpha,0}$ and therefore:
\begin{eqnarray}\label{chern-classes-conditions}
d-\frac{c^2}{2}\in \mathbb{Z} &\text{because}\ c_2(w)\ \text{is an integer},\label{c2-condition}\\
2e-cd+\frac{c^3}{6}\in \mathbb{Z} &\text{because}\ c_3(w)\ \text{is an integer},\label{c3-condition}\\
e-\frac{c}{6}\in \mathbb{Z} &\text{because}\ \chi(w)\ \text{is an integer}.\label{chi-condition}
\end{eqnarray}

\begin{lemma}\label{first-vanishing}
Suppose that $F$ is a sheaf with $\ch(F)=(R,0,-D,0)$ such that $F[1]$ is $\lambda_{\alpha,0,s}$-semistable for all $s\gg 0$ (or equivalently for all $\alpha\gg 0$). Then
$$
H^0(F(-1))=0.
$$
\end{lemma}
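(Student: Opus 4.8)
The plan is to deduce the vanishing from the structure theory of limit-semistable objects already developed for this Chern character, reducing it to an elementary Mumford-slope statement. First I would invoke the result of Section~\ref{sec:bridgeland_walls} (the Proposition immediately following Lemma~\ref{limit_semistable_objects}, itself a consequence of \cite[Lemma 8.9]{BMS}): an object $E\in\mathcal{A}^{\alpha,\beta}$ with $\ch^{\beta}(E)=(-R,0,D,0)$, $R\ge0$, $D>0$, that is $\sigma_{\alpha,\beta,s}$-semistable for all $s\gg0$ necessarily lies in $\coh^{\beta}(\mathbb{P}^3)$. Applying this with $E=F[1]$ and $\beta=0$ yields $F[1]\in\coh^{0}(\mathbb{P}^3)=\langle\free{0}[1],\tors{0}\rangle$. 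Since $F[1]$ is concentrated in cohomological degree $-1$, this membership is equivalent to $F\in\free{0}$; that is, $F$ is torsion free and every subsheaf $F'\into F$ has $\mu(F')\le0$.

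Next I would conclude the vanishing directly. A nonzero element of $H^{0}(F(-1))=\Hom(\op3(1),F)$ is a nonzero sheaf morphism $\phi\colon\op3(1)\to F$, whose image is a subsheaf of $F$ of rank $0$ or $1$. If the image has rank $0$ it is a nonzero torsion subsheaf of the torsion-free sheaf $F$, which is impossible; if it has rank $1$ then $\ker\phi$ is a rank-$0$ subsheaf of the line bundle $\op3(1)$, hence $\ker\phi=0$, so $\phi$ is injective and realizes $\op3(1)$, of slope $1>0$, as a subsheaf of $F\in\free{0}$ — again a contradiction. Hence $\Hom(\op3(1),F)=0$, i.e. $H^{0}(F(-1))=0$. (Equivalently, one may observe that $F(-1)$ is torsion free with $\mu_{\max}(F(-1))\le-1<0$, and a torsion-free sheaf of negative maximal slope has no global sections.)

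I do not expect a genuine obstacle: the substantive content is entirely in the cited structure theorem, and what remains is a routine slope argument. The two points where I would be slightly careful are the identification ``$F[1]\in\coh^{0}(\mathbb{P}^3)$ if and only if $F\in\free{0}$'' (valid because $F[1]$ sits in a single cohomological degree), and the degenerate case $R=0$: there a rank-zero coherent sheaf cannot have $\ch_2(F)=-D<0$, so the hypotheses force $F=0$ and the lemma holds trivially.
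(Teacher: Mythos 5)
Your proof is correct, but it takes a genuinely different route from the paper's. The paper argues entirely inside the Bridgeland heart: a nonzero section of $F(-1)$ gives a nonzero morphism $\mathcal{O}(1)[1]\to F[1]$ in $\mathcal{A}^{\alpha,0}$, and since $\mathcal{O}(1)[1]$ is $\lambda_{\alpha,0,s}$-stable with $\lambda_{\alpha,0,s}(\mathcal{O}(1)[1])=\frac{(6s+1)\alpha^2-1}{3(\alpha^2-1)}>0$ while $\lambda_{\alpha,0,s}(F[1])=0$, no such morphism between (semi)stable objects can exist. You instead front-load the structure theory: the Proposition following Lemma \ref{limit_semistable_objects} puts $F[1]\in\coh^{0}(\mathbb{P}^3)$, hence $F\in\free{0}$, and the vanishing becomes a classical Mumford-slope statement ($\mu_{\max}(F)\le 0$ kills any injection $\mathcal{O}(1)\into F$, and torsion-freeness kills a rank-zero image). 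Both arguments are sound and you are right to flag the two delicate points (the identification $F[1]\in\coh^{0}\Leftrightarrow F\in\free{0}$, and the vacuous case $R=0$). What the paper's approach buys is that it generalizes directly by replacing $\mathcal{O}(1)[1]$ with other $\lambda$-stable objects of positive slope — this is exactly how Lemma \ref{second-vanishing} extends the vanishing — whereas your approach extracts the a priori stronger conclusion $\mu_{\max}(F)\le0$ (equivalent to $H^0(F(-1))=0$ once $F$ is torsion free) at the cost of routing through \cite[Lemma 8.9]{BMS}.
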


\begin{proof}
Suppose that $H^0(F(-1))\neq 0$, then there is a nonzero morphism 
$$
\mathcal{O}(1)[1]\rightarrow F[1]
$$ 
in $\mathcal{A}^{\alpha,0}$ for all $\alpha\gg 0$. Since $\mathcal{O}(1)[1]$ is $\lambda_{\alpha,0,s}$-stable,  $F[1]$ is $\lambda_{\alpha,0,s}$-semistable and 
$$
\lambda_{\alpha,0,s}(\mathcal{O}(1)[1])=\frac{(6s+1)\alpha^2-1}{3(\alpha^2-1)}>0\ \ \text{for all}\ \alpha\gg 0,
$$ 
then such morphism cannot exist.
\end{proof}

Further vanishing can be obtained by using the estimates for walls. Indeed, we have the following:

\begin{lemma}\label{second-vanishing} Suppose that $E$ is an asymptotically $\lambda_{\alpha,0,s}$-semistable object with $\ch(E)=(-R,0,D,0)$, where $R>0$ and $D>1$. Then
\begin{enumerate}
\item $\Hom(\mathcal{O}(n),E)=0$ for $n\geq D+1$, and
\item $ \Hom(\mathcal{O}(n),E)=0$  for $n\geq \frac{D}{2}+1$ when $D$ is even.
\end{enumerate}
\end{lemma}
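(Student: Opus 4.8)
The plan is to exploit the elementary principle that a non-zero morphism between $\lambda_{\alpha,0,s}$-semistable objects cannot decrease the slope: if $P$ is $\lambda_{\alpha,0,s}$-stable, $Q$ is $\lambda_{\alpha,0,s}$-semistable, both lie in $\mathcal{A}^{\alpha,0}$, and $\lambda_{\alpha,0,s}(P)>\lambda_{\alpha,0,s}(Q)$, then $\Hom(P,Q)=0$. Here $P=\mathcal{O}(n)$ is $\lambda$-stable and lies in $\mathcal{A}^{\alpha,0}$ exactly for $\alpha<n$, while $Q=E$ lies in $\coh^0(\mathbb{P}^3)$ by the proposition of Section~\ref{sec:bridgeland_walls}, has $\ch_1^0(E)=0$, and hence satisfies $\lambda_{\alpha,0,s}(E)=0$ identically (its denominator $D+\tfrac{\alpha^2}2R$ is positive); moreover $\nu_{\alpha,0}(E)=+\infty$ keeps $E$ in the heart for every $\alpha$. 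A direct computation gives $\lambda_{\alpha,0,s}(\mathcal{O}(n))=\tfrac{n(n^2-(6s+1)\alpha^2)}{3(n^2-\alpha^2)}$, which is strictly positive precisely when $(6s+1)\alpha^2<n^2$. Thus it suffices to produce one value of $\alpha$ with $(6s+1)\alpha^2<n^2$ at which $E$ is $\lambda_{\alpha,0,s}$-semistable: there $\lambda_{\alpha,0,s}(\mathcal{O}(n))>0=\lambda_{\alpha,0,s}(E)$, forcing $\Hom(\mathcal{O}(n),E)=0$.

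First I would locate the outermost actual $\lambda$-wall of $E$. As $E$ is asymptotically semistable it is $\sigma_{\alpha,0,s}$-semistable for $\alpha\gg0$, and by the finiteness of walls established in Section~\ref{sec:bridgeland_walls} it stays semistable for every $\alpha$ above its largest wall $\alpha_{\max}$. Each such wall is produced by a destabilizing subobject $A$ with $\ch(A)=(r,c,d,e)$ and $c\ge1$, so by Bertram's Lemma the estimates of Section~\ref{sec:bridgeland_walls} apply; in particular \eqref{estimate-min} gives $0<6ce\le\min\{4d^2,4(D-d)^2\}\le D^2$, and the wall equation \eqref{wall_condition} reads $(6s+1)\alpha^2=6e/c=6ce/c^2\le D^2/c^2\le D^2$. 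Hence $(6s+1)\alpha_{\max}^2\le D^2$. For $n\ge D+1$ the vanishing locus $(6s+1)\alpha^2=n^2$ of $\lambda_{\alpha,0,s}(\mathcal{O}(n))$ satisfies $n^2\ge(D+1)^2>D^2\ge(6s+1)\alpha_{\max}^2$, so the interval $\bigl(\alpha_{\max},\,n/\sqrt{6s+1}\bigr)$ is non-empty and lies below $\alpha=n$; picking any $\alpha$ in it yields a point where $E$ is semistable and $\lambda_{\alpha,0,s}(\mathcal{O}(n))>0$, proving~(1).

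For~(2), with $D$ even, the identical scheme applies once the wall bound is sharpened to $(6s+1)\alpha_{\max}^2<(D/2+1)^2$, since then $n\ge D/2+1$ again gives $n^2>(6s+1)\alpha_{\max}^2$. A destabilizer with $c\ge2$ is harmless: there $(6s+1)\alpha^2=6ce/c^2\le D^2/c^2\le(D/2)^2<(D/2+1)^2$. The delicate case is $c=1$, where $(6s+1)\alpha^2=6e\le4\min\{d,D-d\}^2$; the extremal value $d=D/2$ that would realize the bound $D^2$ is arithmetically unavailable, because the integrality condition \eqref{c2-condition} forces $d\in\tfrac12+\Z$ whereas $D/2\in\Z$. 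I expect the main obstacle to be precisely the control of these near-central $c=1$ subobjects: one must feed the resulting parity restriction on $d$, together with the congruences on $6e$ coming from \eqref{c3-condition}--\eqref{chi-condition}, back into \eqref{estimate-min} and \eqref{sharp-d}, and also use the structural constraint $\dim\calh^0(E)\le1$ (which bounds the $\ch_2$ of the torsion part of any subobject) to exclude the offending characters, thereby pushing $\alpha_{\max}$ strictly below $(D/2+1)/\sqrt{6s+1}$. The vanishing $H^0(F(-1))=0$ of Lemma~\ref{first-vanishing} should serve as the template for eliminating the remaining low-rank destabilizers by hand.
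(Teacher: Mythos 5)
Your proof of part (1) is correct and is essentially the paper's own argument: on any wall, \eqref{wall_condition} combined with \eqref{sharp-d} and \eqref{estimate-min} gives $(6s+1)\alpha^2=6ce/c^2\le D^2$, so for each $s$ one can choose $\alpha$ above the outermost wall but with $(6s+1)\alpha^2<n^2$, where $E$ is semistable, $\mathcal{O}(n)$ is stable in the heart, and $\lambda_{\alpha,0,s}(\mathcal{O}(n))>0=\lambda_{\alpha,0,s}(E)$ kills the Hom. Your uniform treatment of destabilizers with $c\ge2$ in part (2), namely $(6s+1)\alpha^2=6ce/c^2\le D^2/c^2\le(D/2)^2$, is also correct, and is in fact cleaner than the paper's separate discussion of the cases $(c,2d)=(2,D)$ and $(c,2d)=(3,D-1)$.

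The genuine gap is the $c=1$ case of part (2), which you explicitly leave open, and the repair you sketch cannot close it: the offending characters survive every constraint you propose to feed back in. For $c=1$, conditions \eqref{c2-condition} and \eqref{chi-condition} force $d\in\tfrac{1}{2}+\Z$ and $6e\equiv1\pmod 6$, and \eqref{c3-condition} is then automatic; taking $2d=D\pm1$, the bound \eqref{estimate-min} only yields $6e\le(D-1)^2$. For every even $D\ge4$ there are admissible values with $(D/2)^2<6e\le(D-1)^2$: for instance, when $D=4$ the character $\left(0,1,\tfrac{3}{2},\tfrac{7}{6}\right)$ satisfies \eqref{sharp-d}, \eqref{estimate-min}, \eqref{inequality-rank}, the quadratic-form inequalities of Lemma \ref{qform}, and all three integrality conditions, and its numerical wall sits at $(6s+1)\alpha^2=7>(D/2)^2=4$. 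This character is not a phantom of the numerics either: for $D=3$ it produces an actual wall, coming from $\mathcal{O}_H(2)\into\mathcal{O}_C(2)$ with $C$ a plane cubic. So no amount of congruence bookkeeping within the constraint set of Sections \ref{sec:bridgeland_walls}--\ref{sec:beta=0} can push $\alpha_{\max}$ below $(D/2+1)/\sqrt{6s+1}$; what is needed is a genuinely non-numerical argument that characters $\left(r,1,\tfrac{D\pm1}{2},e\right)$ with $6e>(D/2)^2$ cannot arise from actual destabilizing subobjects of an asymptotically semistable $E$ with $R>0$. For what it is worth, this is exactly the point where the paper's own proof is thinnest: after discarding $(r,1,D/2,D^2/6)$ via \eqref{c2-condition}, it asserts that in the odd-$c$ case the largest value of $e/c$ occurs at $c=3$, passing over $c=1$ with $2d=D\pm1$ in silence; so you have correctly isolated the crux, but neither your sketch nor the paper's stated inequalities resolve it. (The problem only appears for $D\ge4$: when $D=2$, the $c=1$ constraints force $6e=1$, so the monad wall is indeed the largest one and the corollary on charge-2 instantons is unaffected.)
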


\begin{proof}
First, notice that if $n>0$ then $\mathcal{O}(n)\in\mathcal{A}^{\alpha,0}$ is $\lambda_{\alpha,0,s}$-stable for all $\alpha<n$. On the other hand, 
$$
\lambda_{\alpha,0,s}(\mathcal{O}(n))> 0\ \ \text{if and only if}\ \ \left(s+\frac{1}{6}\right)\alpha^2< \frac{n^2}{6}.
$$
Then it follows that if $(r,c,d,e)$ is the Chern character of a destabilizing subobject producing the largest wall for $E$ and
$$
\frac{e}{c}=\frac{\alpha_0^2}{6},
$$
then $\Hom(\mathcal{O}(n),E)=0$ for all $n>\alpha_0$. 

To start, let us prove that $\alpha_0\leq D$ and so a nonzero morphism $\mathcal{O}(n)\rightarrow E$ for $n> D$ can not exist. This easily follows from inequalities \eqref{sharp-d} and \eqref{estimate-min} since the solution producing the largest possible numerical value of $e/c$ is $2d=D$ , $c=1$, and $6e=D^2$.

Now, assume that $D$ is even. We will prove that in this case $\alpha_0\leq D/2$. Indeed, the numerical solution $(r,1,D/2,D^2/6)$ is not a Chern character because it does not satisfy condition \eqref{c2-condition}. The next largest possible wall occurs when either $2d=D$ and $c$ is even or when $2d=D-1$ and $c$ is odd. In the first case, the largest possible value for $e/c$ is obtained when $c=2$ and $6e=D^2/2$, in which case
$$
\frac{e}{c}=\frac{1}{6}\left(\frac{D}{2}\right)^2.
$$
For the second case, $e/c$ would be largest when $c=3$, in which case $2e$ would have to be an integer for condition \eqref{c3-condition} to be satisfied, and
$$
e=\frac{1}{2}\floor*{\frac{(D-1)^2}{9}}.
$$ 
However, since 
$$
\frac{1}{6}\floor*{\frac{(D-1)^2}{9}}<\frac{1}{6}\left(\frac{D}{2}\right)^2
$$
then we conclude that $E$ is $\lambda_{\alpha,0,s}$-semistable for all
$$
\left(s+\frac{1}{6}\right)\alpha^2\geq \frac{1}{6}\left(\frac{D}{2}\right)^2.
$$
Thus $\alpha_0\leq D/2$ when $D$ is even.
\end{proof}

\begin{corollary}
An object $E[1]$ with $\ch(E[1])=(-2,0,2,0)$ is asymptotically $\lambda_{\alpha,0,s}$-semistable if and only if $E$ is an instanton sheaf of charge 2.
\end{corollary}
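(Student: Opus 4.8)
The plan is to read the corollary off the earlier classification of asymptotically $\lambda_{\alpha,0,s}$-semistable objects of Chern character $(-2,0,2,0)$ along $\{\beta=0\}$, by sorting out which of the four types listed there can be the shift $E[1]$ of a sheaf. The implication ``$E$ an instanton sheaf of charge $2$ forces $E[1]$ to be asymptotically $\lambda_{\alpha,0,s}$-semistable'' is precisely the content of type (1) in that proposition, so there is nothing to prove in that direction.

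For the converse, suppose $E$ is a sheaf with $\ch(E)=(2,0,-2,0)$ and $A:=E[1]$ is asymptotically $\lambda_{\alpha,0,s}$-semistable; then $A$ is of one of the four types, and the point is that $\calh^0(A)=0$ while the objects of types (2) and (3) and those of type (4) with $z>0$ all have nonzero $\calh^0$. For type (3) and type (4) this is immediate from the defining triangles, where $\calh^0$ equals $\mathcal{O}_L(2)$, respectively $Z$. For type (2) one has $A=F^\vee[1]$ with $F$ a non locally free rank $2$ instanton sheaf of charge $2$; dualising $0\to F\to F^{**}\to Q_F\to 0$ and using that $F^{**}$ is locally free while $Q_F$ has pure dimension $1$ identifies $\calh^0(F^\vee[1])$ with $\inext^1(F,\op3)\cong\inext^2(Q_F,\op3)$, which is nonzero since $Q_F\neq 0$. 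Hence $A$ must be of type (1)---which is the desired conclusion---or of type (4) with $z=0$, i.e.\ $A=E[1]$ with $E$ a $\mu$-semistable reflexive sheaf of Chern character $(2,0,-2,0)$.

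The remaining task, and the only one requiring genuine input, is to see that such an $E$ is in fact an instanton sheaf of charge $2$. Here $c_1(E)=0$, $c_2(E)=2$ and $c_3(E)=0$; since a rank $2$ reflexive sheaf on $\p3$ with $c_3=0$ is locally free, $E$ is a $\mu$-semistable rank $2$ bundle with $c_1=0$ and $c_2=2$. Lemma \ref{first-vanishing} applied with $R=D=2$ gives $h^0(E(-1))=0$, and Serre duality together with $E^\vee\cong E$ then gives $h^3(E(-3))=h^0(E(-1))^*=0$; the only instanton vanishing left is $h^1(E(-2))=0$, which in turn forces $h^2(E(-2))=0$. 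I expect this last vanishing to be the main obstacle: it expresses the classical fact that every $\mu$-semistable rank $2$ bundle on $\p3$ with $c_1=0$ and $c_2=2$ is an instanton bundle of charge $2$, and one may either invoke it directly or note that it is already incorporated in the classification proposition above (the sub-case there in which $\calh^{-1}$ of the object is locally free). In either case $E$ is an instanton sheaf of charge $2$, which completes the proof.
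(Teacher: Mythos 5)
Your reduction is sound up to the last step: sorting the four types of the classification proposition by whether $\calh^0$ vanishes correctly leaves only type (1) and type (4) with $z=0$, and the observation that a rank~2 reflexive sheaf with $c_3=0$ is locally free is fine. The gap is exactly where you flag it, and neither of your two proposed resolutions works. The ``classical fact'' that every $\mu$-semistable rank~2 bundle on $\p3$ with $c_1=0$, $c_2=2$ is an instanton is \emph{false}: the remark immediately following the corollary in the paper cites Hartshorne's Example 8.1.2, which for $m=1$ produces a $\mu$-semistable bundle $F$ with $\ch(F)=(2,0,-2,0)$ and $H^1(F(-2))\ne0$, hence not an instanton. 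Nor is the vanishing ``incorporated in the classification proposition'': type (4) there records only that $\calh^{-1}(A)$ is $\mu$-semistable and reflexive, and says nothing about $h^1$.

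The missing ingredient is Lemma \ref{second-vanishing} --- the very lemma to which this statement is a corollary. Applied with $R=D=2$ (so $D$ even and $D/2+1=2$), part (2) gives $\Hom(\mathcal{O}(2),E[1])=\Ext^1(\mathcal{O}(2),E)=H^1(E(-2))=0$ for any asymptotically $\lambda_{\alpha,0,s}$-semistable $E[1]$. Combined with $h^0(E(-1))=0$ from Lemma \ref{first-vanishing} (which you did use) and Serre duality together with $E^\vee\cong E$ for the remaining two vanishings, this completes the instanton conditions. In short: the last vanishing is not a consequence of $\mu$-semistability of $E$; it is an extra constraint imposed by asymptotic $\lambda$-semistability, extracted from the wall estimates of Section \ref{sec:beta=0}, and your proof as written would wrongly certify Hartshorne's non-instanton bundles.
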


\begin{remark} We know by \cite[Example 8.1.2]{Hart80} that for every $m>0$, there exists a semistable vector bundle $F$ with $\ch(F)=(2,0,-2m,-m(m-1)(m+4))$ and $H^1(F(-\frac{D}{2}-1))\neq 0$. Thus for $D=2$, the object $A=F[1]$ is not limit semistable. Moreover, $A$ satisfies all the conditions of Theorem 3.1 but condition (3), and so this condition can not be relaxed to allow $\mathcal{H}^{-1}(A)$ to be $\mu$-semistable.
\end{remark}

\begin{remark} In \cite[Example 8.1.3]{Hart80}, Hartshorne also constructs for every $D$ even, a stable rank 2 vector bundle $F$ with $\ch_1(F)=0$ and $H^1(F(-\frac{D}{2}))\neq 0$. Thus $A=F[1]$ is strictly semistable along the numerical wall 
$$
\left(s+\frac{1}{6}\right)\alpha^2=\frac{1}{6}\left(\frac{D}{2}\right)^2.
$$
\end{remark}

\begin{example} If $v=(R,0,D,0)$ with $R,D>0$, then there are no asymptotically semistable objects along $\beta=0$ since such object would necessarily have to be tilt semistable and so would satisfy the Bogomolov inequality. However, for small values of $\alpha$ there are Bridgeland stable objects. For instance, if $d\geq 2$ then those complexes $E$ fitting into a short exact sequence of the form
$$
0\rightarrow \mathcal{O}(-d)[2]\rightarrow E\rightarrow \mathcal{O}(d)\rightarrow 0
$$
are Bridgeland semistable for $\beta=0$ and $0<d^2-(6s+1)\alpha^2\ll 1$.
\end{example}

\begin{example}\label{walls-charge1} If $\beta=0$ and $v=(-2,0,1,0)$, then there is only one wall with $c>0$ produced by an object of invariants 
$$
\ch(A)=\left(r,1,\frac{1}{2},\frac{1}{6}\right),\ \ -3\leq r\leq 1.
$$
\end{example}

Since a properly torsion free instanton sheaf $E$ of charge 1 fits into an exact sequence 
$$
0\rightarrow \mathcal{O}_L(1)\rightarrow E[1]\rightarrow 2\mathcal{O}[1]\rightarrow 0
$$
in $\mathcal{A}^{\alpha,0}$, then $E[1]$ and $E^{\vee}[1]$ are both strictly $\lambda_{\alpha,0,s}$-semistable and $S$-equivalent. Thus, Example \ref{walls-charge1} together with Proposition \ref{LSSO_(-2,0,1,0)} imply the following.

%\begin{quote} \color{red}
%Corollary and picture below need to be reformulated!! Moduli is P5 above the wall and empty below. The walls are like the one in the table of Example \ref{walls-charge2}. 
%\end{quote}

\begin{corollary}\label{chamber0}
For $\beta=0$, the $(\alpha,s)$ plane is divided into two stability chambers for the numerical Chen character $v:=(-2,0,1,0)$:
\begin{itemize}
\item[(C1)] $\calm_{\alpha,0,s}(v)=\p5$, and $\labs$-stable objects are of the form 
$E[1]$ where $E$ is either a locally free instanton sheaf of charge 1, or the dual of a properly torsion free sheaf of charge 1;
\item[(C2)] $\calm_{\alpha,0,s}(v)=\emptyset$.
\end{itemize}
\end{corollary}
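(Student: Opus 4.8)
The plan is to read off the chamber structure from the single wall of Example~\ref{walls-charge1}, to identify the outermost chamber through Proposition~\ref{LSSO_(-2,0,1,0)}, and to empty the innermost chamber by the wall-crossing mechanism recorded in Remark~\ref{monad-wall}.

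\emph{Step 1 (chamber decomposition).} By Example~\ref{walls-charge1} the unique actual wall for $v=(-2,0,1,0)$ in the $(\alpha,s)$--plane carried by a subobject with positive $\ch_1$ is the monad wall, and I would rederive its equation $(s+\tfrac{1}{6})\alpha^2=\tfrac{1}{6}$, i.e.\ $(6s+1)\alpha^2=1$, from $\ch(A)=(r,1,\tfrac{1}{2},\tfrac{1}{6})$. A destabilizing subobject with $\ch^{0}_{1}=0$ necessarily has $\ch^{0}_{3}=0$ (established in the subsection on destabilizers with $\ch^\beta_1=0$), hence Bridgeland slope $0=\labs(v)$ for every $(\alpha,s)$, so it defines no wall and only records that the object it destabilizes is strictly, rather than properly, semistable wherever it is semistable. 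Consequently the set of $\labs$--semistable objects of class $v$ is constant on each of the two components $(\mathrm{C1})\colon(6s+1)\alpha^2>1$ and $(\mathrm{C2})\colon(6s+1)\alpha^2<1$ of the complement of the monad wall; I would upgrade ``locally constant'' to ``constant'' on each chamber using openness of semistability together with Bertram's Lemma, which keeps $E$ and its candidate destabilizers inside $\mathcal{A}^{\alpha,0}$ across the $\ch_1=0$ locus.

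\emph{Step 2 (outermost chamber).} Since the semistable objects of class $v$ are constant on $(\mathrm{C1})$, they coincide with the asymptotically $\labs$--semistable objects along $\{\beta=0\}$, which Proposition~\ref{LSSO_(-2,0,1,0)} identifies as the shifts $E[1]$ of rank~$2$ charge~$1$ instanton sheaves together with $E^\vee[1]$ for $E$ a properly torsion free such sheaf, the $\labs$--stable ones being exactly $E[1]$ with $E$ locally free. To turn this classification into the equality $\calm_{\alpha,0,s}(v)=\p5$ I would use Proposition~\ref{(2,0,-1,0)} and the classical picture: locally free charge~$1$ instantons are the null correlation bundles, parametrized by the open locus $\p5\setminus G(2,4)$ of non-degenerate alternating forms on $\C^4$; the properly torsion free ones are the null correlation sheaves of type (2.1), and each of them --- together with its derived dual $E^\vee[1]$, as recalled just before the corollary --- is $S$--equivalent to $\mathcal{O}_\ell(1)\oplus 2\,\op3[1]$ for the vanishing line $\ell$, so the corresponding $S$--equivalence classes sweep out the Pl\"ucker quadric $G(2,4)$; gluing the stable locus to this boundary divisor gives $\p5$. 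The delicate point in this step is checking that the resulting bijection $\p5\to\calm_{\alpha,0,s}(v)$ is an isomorphism of schemes, but this amounts to the classical moduli-theoretic description of null correlation bundles.

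\emph{Step 3 (innermost chamber).} Here one wants $\calm_{\alpha,0,s}(v)=\varnothing$, and the plan is a see-saw across the monad wall. Assume $A$ with $\ch(A)=v$ is $\labs$--semistable somewhere in $(\mathrm{C2})$; by Step~1 it is semistable throughout $(\mathrm{C2})$, and by closedness of semistability it is semistable on the monad wall, where $\labs(A)=0$ and $A$ admits a Jordan--H\"older filtration by $\labs$--stable objects of slope~$0$. Example~\ref{walls-charge1} forbids any slope-$0$ stable constituent of positive $\ch_1$ other than those with $\ch=(r,1,\tfrac{1}{2},\tfrac{1}{6})$, which I would use to show that the list of constituents of $A$ on the wall is either $\{\Omega^1_{\p3}(1)[1],\ \op3(-1)[2]\}$ or $\{\mathcal{O}_\ell(1),\ 2\,\op3[1]\}$. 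If the constituents are $\{\mathcal{O}_\ell(1),\,2\,\op3[1]\}$, or if $\Omega^1_{\p3}(1)[1]$ occurs as a subobject in the first case, then $A$ belongs to the $(\mathrm{C1})$--list, and I would check that every such object is destabilized on entering $(\mathrm{C2})$: for $N[1]$ with $N$ a null correlation bundle the subobject $\Omega^1_{\p3}(1)[1]$ satisfies
$$ \labs\bigl(\Omega^1_{\p3}(1)[1]\bigr)=\frac{\tfrac{1}{6}-(s+\tfrac{1}{6})\alpha^2}{\tfrac{1}{2}+\tfrac{3}{2}\alpha^2}>0=\labs(A) \quad\text{on } (\mathrm{C2}), $$
and for $E^\vee[1]$ the line bundle $\op3(1)$ plays the same role; so none of them survives. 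The only remaining possibility is the flipped extension $0\to\op3(-1)[2]\to A\to\Omega^1_{\p3}(1)[1]\to 0$, classified by $\Ext^1(\Omega^1_{\p3}(1)[1],\op3(-1)[2])$, which vanishes by Remark~\ref{monad-wall}; hence $A\cong\Omega^1_{\p3}(1)[1]\oplus\op3(-1)[2]$, which is $\labs$--unstable in $(\mathrm{C2})$ by the displayed inequality. Thus $(\mathrm{C2})$ carries no semistable object of class $v$. The main obstacle overall is exactly the Jordan--H\"older bookkeeping at the monad wall that reduces everything to the single $\Ext^1$--vanishing of Remark~\ref{monad-wall}.
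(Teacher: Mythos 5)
Your Steps 1 and 2 follow the paper's own route: the chamber decomposition comes from Example \ref{walls-charge1} (together with the observation that $\ch_1^0=0$ destabilizers have slope $0$ and hence produce no wall), and the identification of the outermost chamber with the classical $\p5$ of charge-one instantons comes from Proposition \ref{LSSO_(-2,0,1,0)}. For Step 3, however, you take a genuinely different route from the paper, and it is there that your argument has a gap. The paper empties the chamber $(6s+1)\alpha^2<1$ by Lemma \ref{quiver_region}: in that region the relevant heart is the quiver heart $\langle \op3(-2)[3],\op3(-1)[2],\op3[1],\op3(1)\rangle$, every object of class $(-R,0,D,0)$ is a complex with dimension vector $(0,D,2D+R,D)$, and the subrepresentation of dimension vector $(0,0,0,1)$ (i.e.\ $\op3(1)$), which always exists, has $\theta_{\alpha,s}$-pairing $1-(6s+1)\alpha^2>0$. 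This kills \emph{every} object of the class at once, with no case analysis. Your alternative --- push a hypothetical semistable object to the monad wall, list its Jordan--H\"older factors, and destabilize each configuration inside (C2) --- can be made to work, but the bookkeeping you defer is exactly the hard part, and your asserted list of configurations is incomplete.

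Concretely: Example \ref{walls-charge1} allows destabilizers with $\ch(A)=(r,1,\tfrac12,\tfrac12,\tfrac16)$ for \emph{all} $-3\le r\le 1$, not only $r=-3$. The case $r=1$ is $\ch(\op3(1))$, giving the ``dual'' configuration
$$ 0\to\op3(1)\to A\to T\p3(-1)[1]\to 0, $$
whose factors $\{\op3(1),\,T\p3(-1)[1]\}$ do not appear in your two-item list $\{\Omega^1_{\p3}(1)[1],\op3(-1)[2]\}$, $\{\mathcal{O}_\ell(1),2\,\op3[1]\}$; this is precisely the destabilizer relevant to $E^\vee[1]$ and to null correlation bundles via $\Hom(\op3(1),N[1])=H^1(N(-1))\ne0$. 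You would also need to rule out decompositions with three or more stable factors of slope $0$ at the wall (e.g.\ involving $\op3[1]$, which has slope $0$ everywhere), which Example \ref{walls-charge1} alone does not do --- it classifies walls, not Jordan--H\"older filtrations of semistable objects on a wall. Each omitted configuration can in fact be handled by the same see-saw plus an $\Ext^1$-vanishing (e.g.\ $\Ext^1(\op3(1),T\p3(-1)[1])=H^2(T\p3(-2))=0$), so your strategy is salvageable, but as written the reduction ``the only remaining possibility is the flipped extension'' is not justified. If you want to keep your wall-crossing approach, you should either carry out the full classification of slope-$0$ stable factors at the point $(\alpha=1/\sqrt{6s+1},\beta=0)$, or simply replace Step 3 by the paper's Lemma \ref{quiver_region}, which is both shorter and works for all $(-R,0,D,0)$ simultaneously.
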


%\begin{figure}[ht] \centering
%\includegraphics{main-pics_9.pdf}
%\caption{Pseudo $\lambda$-walls for $v=(2,0,-1,0)$. The intersection point is $(\alpha=1/\sqrt{3},\beta=-2)$.}
%\label{fig (2,0,-1,0)}
%\end{figure}

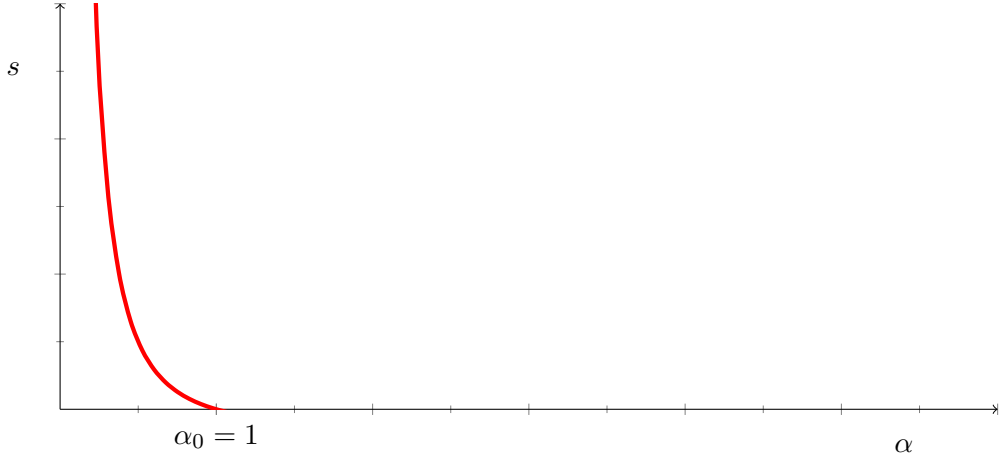
\begin{figure}[ht]\label{fig (2,0,-1,0)}
\begin{tikzpicture}
 
\begin{axis}[
    xmin = 0, xmax = 6.0,
    ymin = 0, ymax = 3.0,
    xtick distance = 1.0,
    ytick distance = 1.0,
    grid = none,
    minor tick num = 1,
    width = \textwidth,
    height = 0.5\textwidth,
    axis lines=middle,
    axis line style={->},
    x label style={at={(axis description cs:0.9,-0.05)},anchor=north},
    y label style={at={(axis description cs:-0.05,.8)},anchor=south},
    xlabel={$\alpha$},
    ylabel={$s$},
    xticklabels={,,},
    yticklabels={,,},
    extra x ticks={1},
extra x tick style={xticklabel={$\alpha_0=1$}}]
    \addplot[
        domain = 0:30,
        samples = 400,
        smooth,
        ultra thick,
        red,
    ] {((1/x^2)-1)/6};
    \end{axis}
\end{tikzpicture}
  \caption{The only $\lambda$-wall for $(-2,0,1,0)$ is the monad wall.}
\end{figure}

The two stability chambers described above are pictured in Figure \ref{fig (2,0,-1,0)} as the components of the complement of the red curve, which corresponds to the \emph{monad wall} described in Remark \ref{monad-wall}. The chamber (C1) is the outermost chamber ($\alpha\gg 0$) and the chamber (C2) is the one enclosed by the axes and the monad wall, here the moduli space is empty by Lemma \ref{quiver_region}.

\begin{example}\label{walls-charge2} If $\beta=0$ and $v=(-2,0,2,0)$, then the possible Chern characters of subobjects with $c>0$ and destabilizing a limit semistable object of Chern character $v$ are given by inequalities \eqref{inequality-rank}, \eqref{sharp-d} and \eqref{estimate-min}, and conditions \eqref{c2-condition}, \eqref{c3-condition} and \eqref{chi-condition}:
\\
\small{
\begin{center}
\begin{tabular}{|c|c|c|c|}
\hline
& & &
\\
$\ch(A)$ & $\ch_0(A)$ & $\alpha_0$ & Possible destabilizing sequence\\
 & & & 
\\
\hline
& & & 
\\
$\displaystyle \left(r,1,\frac{1}{2},\frac{1}{6}\right)$ & $\displaystyle -5\leq r\leq 1$ & 1 & $\displaystyle 0\rightarrow  \mathcal{O}(1)\rightarrow E\rightarrow \mathcal{G}\rightarrow 0$\\
& & & \\
$\displaystyle \left(r,2,1,\frac{1}{3}\right)$ & $-4\leq r\leq 2$ & & $0\rightarrow \mathcal{K}[1]\rightarrow E\rightarrow 2\mathcal{O}(-1)[2]\rightarrow 0$, \ $\mathcal{K}[1]\in \Ext^1(6\mathcal{O}[1],2\mathcal{O}(1))$  \\
& & &\\
$\displaystyle \left(r,1,\frac{3}{2},\frac{1}{6}\right)$ & $-3\leq r\leq 3$ & & $0\rightarrow \mathcal{G}^{\vee}[2]\rightarrow E^{\vee}[2]\rightarrow \mathcal{O}(-1)[2]\rightarrow 0$ \\
& & &\\
\hline
\end{tabular}
\end{center}
}
\normalsize
\end{example}

\begin{lemma}\label{quiver_region}
There are no walls for the Chern character $v=(-R,0,D,0)$ in the region given by
$$
\left(s+\frac{1}{6}\right)\alpha^2<\frac{1}{6},
$$
i.e., the monad wall $(\alpha_0=1)$ is the last wall. Moreover, there are no semistable objects of Chern character $v$ in this region.
\end{lemma}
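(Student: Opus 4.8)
The plan is to prove the stronger assertion that there are \emph{no} $\sigma_{\alpha,0,s}$-semistable objects of class $v=(-R,0,D,0)$ in the region $\calr:=\{(6s+1)\alpha^2<1\}=\{(s+\tfrac16)\alpha^2<\tfrac16\}$; the absence of walls then follows at once, since a wall inside $\calr$ would by definition destabilize some $\sigma_{\alpha,0,s}$-semistable object of class $v$, and the monad wall $(6s+1)\alpha^2=1$ bounds $\calr$, so it is the innermost (``last'') wall. The starting observation is that the central charge $Z_{\alpha,0,s}(v)=i\bigl(D+\tfrac{\alpha^2}{2}R\bigr)$ is purely imaginary with positive imaginary part; hence every object of class $v$ has $\labs=0$, and a semistable such $E$ has a well-defined phase equal to that of the imaginary axis, namely $\phi(E)=\tfrac12$, uniformly in $R$ (no splitting into the cases $R>0$ and $R=0$ is needed).

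First I would test $E$ against $\op3(1)$. A direct computation of $Z_{\alpha,0,s}(\op3(1))$ shows that on $\calr$ its real part is negative and its imaginary part positive, so $\op3(1)\in\mathcal{A}^{\alpha,0}$ with $\phi(\op3(1))>\tfrac12=\phi(E)$; since $\op3(1)$ is $\sigma_{\alpha,0,s}$-stable there, semistability of $E$ forces $\Hom_{\mathcal{A}^{\alpha,0}}(\op3(1),E)=0$ (the image of any nonzero map would be a quotient of $\op3(1)$ and a subobject of $E$, hence simultaneously of phase $\ge\phi(\op3(1))$ and $\le\phi(E)$, which is absurd). Next I would check that $\op3(-3)[2]\in\mathcal{A}^{\alpha,0}$ throughout $\calr$: this reduces to $\op3(-3)\in\free{0}$ and $\nu_{\alpha,0}(\op3(-3)[1])\le0$, i.e.\ $\alpha<3$, which holds since $\alpha^2<1$ on $\calr$. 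Consequently $\op3(-3)\in\mathcal{A}^{\alpha,0}[-2]$, and combining Serre duality $\Ext^i(\op3(1),E)\cong\Ext^{3-i}(E,\op3(-3))^*$ with the vanishing of negative $\Ext$-groups between objects of the heart yields $\Ext^2(\op3(1),E)=\Ext^3(\op3(1),E)=0$.

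Finally I would compute the Euler pairing by Riemann--Roch: $\chi(\op3(1),E)=\chi(E(-1))=D$, independently of $R$. Feeding in the three vanishings together with $\Ext^0=\Hom_{\mathcal{A}^{\alpha,0}}(\op3(1),E)=0$ gives
$$ D=\chi(\op3(1),E)=-\dim\Ext^1(\op3(1),E)\le0, $$
contradicting $D>0$. Hence no semistable object of class $v$ can exist in $\calr$, and in particular the second moduli space in the Corollary is empty. The main obstacle is the heart bookkeeping: one must verify carefully that both $\op3(1)$ and $\op3(-3)[2]$ lie in $\mathcal{A}^{\alpha,0}$ and that the strict phase inequality $\phi(\op3(1))>\phi(E)$ holds on \emph{all} of $\calr$ (not merely along the monad wall), and one must invoke the $\sigma_{\alpha,0,s}$-stability of the line bundle $\op3(1)$ to license the phase argument that kills $\Ext^0$; once these are in place, the Riemann--Roch contradiction is forced and purely numerical.
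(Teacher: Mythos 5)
Your proof is correct, but it takes a genuinely different route from the paper's. The paper argues inside the quiver region: it asserts the $\labs$-stability of $\op3(1)$, $\op3[1]$, $\op3(-1)[2]$, $\op3(-2)[2]$ for $\alpha<1$, performs one further tilt so that the relevant heart becomes $\langle\op3(-2)[3],\op3(-1)[2],\op3[1],\op3(1)\rangle$, writes any semistable object of class $v$ (and any potential destabilizer) as a Beilinson-type complex $\op3(-1)^{\oplus D}\to\op3^{\oplus(2D+R)}\to\op3(1)^{\oplus D}$, and converts $\labs$-semistability into King stability for an explicit weight $\theta_{\alpha,s}$; the dimension-vector computation shows any wall arising from a subcomplex has equation $\left(s+\frac16\right)\alpha^2=\frac16$, and the subrepresentation with dimension vector $(0,0,0,1)$, i.e.\ $\op3(1)\into E$, strictly destabilizes every such object inside the region. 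You bypass all of this: the phase comparison $\phi(\op3(1))>\frac12=\phi(E)$ kills $\Hom(\op3(1),E)$, heart membership of $\op3(-3)[2]$ plus Serre duality kills $\Ext^2$ and $\Ext^3$, and Riemann--Roch gives $\chi(\op3(1),E)=\chi(E(-1))=D$ (I verified this: the $R$-dependence cancels), forcing $\ext^1(\op3(1),E)=-D<0$, which is absurd. Your route is shorter and needs strictly less unproven input: only semistability of $\op3(1)$ on the region, versus the paper's stability of four objects together with the identification of the double tilt with the quiver heart (in both approaches these facts are quoted from the quiver-region results for $\p3$ rather than proved). What the paper's heavier machinery buys is the explicit quiver description and weights, which are reused later in the paper (for instance to identify the stable factors at the monad wall in the $(0,0,3,0)$ example), and a description of which numerical walls could occur, not merely their absence. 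Two points to tighten in your write-up: membership of $\op3(1)$ in $\mathcal{A}^{\alpha,0}$ does not follow from the position of its central charge, as your phrasing suggests, but from the direct check $\nu_{\alpha,0}(\op3(1))=\frac{1-\alpha^2}{2\alpha}>0$ for $\alpha<1$ (you do flag that this needs separate verification); and your reduction ``no semistable objects $\Rightarrow$ no walls'' is valid because an actual wall point carries a strictly semistable --- in particular semistable --- object of class $v$, which is exactly the notion of wall the paper uses in this section.
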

\begin{proof}
First, notice that $\mathcal{O}(1), \mathcal{O}[1], \mathcal{O}(-1)[2]$, and $\mathcal{O}(-2)[2]$ are $\lambda_{\alpha,0,s}$-stable objects in $\mathcal{A}^{\alpha,0}$ for all $\alpha<1$, in particular, for all $(\alpha,s)$ in the region 
$$
Q=\left\{(\alpha,s)\ \bigg|\ \alpha,s>0,\ \left(6s+1\right)\alpha^2\leq 1\right\}.
$$
Now, recall that 
$$
\lambda_{\alpha,0,s}(\mathcal{O}(n)[k])=\frac{n^3-\left(6s+1\right)\alpha^2n}{3(n^2-\alpha^2)},
$$
thus a simple computation shows that for all $(\alpha,s)\in Q$ we have
$$
\lambda_{\alpha,0,s}(\mathcal{O}(-2)[2])<\lambda_{\alpha,0,s}(\mathcal{O}(-1)[2])\leq 0=\lambda_{\alpha,0,s}(\mathcal{O}[1])\leq \lambda_{\alpha,0,s}(\mathcal{O}(1)).
$$
Therefore, if $\lambda_0$ is any real number between $\lambda_{\alpha,0,s}(\mathcal{O}(-2)[2])$ and $\lambda_{\alpha,0,s}(\mathcal{O}(-1)[2])$ and $\mathcal{A}^{\#}_{\alpha,0,s}$ denotes the tilt of $\mathcal{A}^{\alpha,0}$ with respect to the Bridgeland slope at $\lambda_0$, then
$$
\langle \mathcal{O}(-2)[3], \mathcal{O}(-1)[2], \mathcal{O}[1], \mathcal{O}(1)\rangle=\mathcal{A}^{\#}_{\alpha,0,s}.
$$
Now, if $E$ is $\lambda_{\alpha,0,s}$-semistable with $\lambda_{\alpha,0,s}(E)=0$ then $E$ and every possible destabilizing subobject of $E$ belong to $\mathcal{A}^{\#}_{\alpha,0,s}$. Thus, $E$ (and every possible destabilizing subobject of $E$) is quasi-isomorphic to a complex of the form
$$
\mathcal{O}(-2)^{\oplus n_{-3}}\rightarrow \mathcal{O}(-1)^{\oplus n_{-2}}\rightarrow \mathcal{O}^{\oplus n_{-1}}\rightarrow \mathcal{O}(1)^{\oplus n_0}.
$$
Denote by $\hat{n}(E)$ the dimension vector $(n_{-3},n_{-2},n_{-1},n_0)$. Then the $\lambda_{\alpha,0,s}$-semistability of $E$ is equivalent to having $\hat{n}(A)\cdot \theta_{\alpha,s}\leq 0$ for every subcomplex $A$ of E, where
$$
\theta_{\alpha,s}=\left(8-2\left(6s+1\right)\alpha^2, -1+\left(6s+1\right)\alpha^2,0,1-\left(6s+1\right)\alpha^2\right).
$$
In the special case when $\ch(E)=(-R,0,D,0)$, an easy computation shows that
$$
\hat{n}(E)=(0,D,2D+R,D).
$$
Thus, every $\lambda$-wall is produced by a subcomplex $A$ with $n_{-3}(A)=0$ and moreover
\begin{align*}
\ch_0(A)&= n_{-2}(A)-n_{-1}(A)+n_0(A)\\
\ch_1(A)&= -n_{-2}(A)+n_0(A)\\
\ch_2(A)&= \frac{1}{2}n_{-2}(A)+\frac{1}{2}n_0(A)\\
\ch_3(A)&= -\frac{1}{6}n_{-2}(A)+\frac{1}{6}n_0(A).
\end{align*}
In particular, $\ch_1(A)=6\ch_3(A)$ and so the equation of the wall produced by $A$ is
$$
\left(s+\frac{1}{6}\right)\alpha^2=\frac{1}{6}.
$$
Now, a complex $E$ of the form
$$
\mathcal{O}(-1)^{\oplus D}\rightarrow \mathcal{O}^{2D+R}\rightarrow \mathcal{O}(1)^{D}
$$
has necessarily $\mathcal{O}(1)$ as a subcomplex. Therefore, such object can not be $\lambda_{\alpha,0,s}$-semistable for $(\alpha,s)$ in the chamber contained in $Q$, since
$$
(0,0,0,1)\cdot\theta_{\alpha,s}=1-(6s+1)\alpha^2>0.
$$
\end{proof}
\begin{remark} If $\mathcal{F}$ is a Gieseker semistable sheaf with $\ch(\mathcal{F})=(0,0,D,0)$ that is not destabilized for any $\alpha>1$, then it follows from the proof of Lemma \ref{second-vanishing} that $\mathcal{F}$ is a rank 0 instanton sheaf. Therefore, all the non-instanton sheaves must be destabilized before the monad wall. 
\end{remark}
\begin{proposition} Suppose that $\nu_{\alpha,0}(E)\neq 0$. Then 
$$
E\ \text{is}\ \lambda_{\alpha,0,s}\text{-semistable}\ \Longleftrightarrow\ E^{\vee}[2]\ \text{is}\ \lambda_{\alpha,0,s}\text{-semistable}.
$$
\end{proposition}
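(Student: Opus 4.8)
The plan is to realise the operation $E\mapsto E^{\vee}[2]$ as an exact anti\-autoequivalence of $D^{\mathrm b}(\p3)$ and to show that it interchanges $\sigma_{\alpha,0,s}$\-semistable objects, the hypothesis $\nu_{\alpha,0}(E)\neq0$ being precisely what keeps the dualised object inside a single cohomological degree. Set $\mathbb{D}:=\RHom(-,\mathcal{O}_{\p3})[2]$. It is a contravariant exact equivalence with $\mathbb{D}\circ\mathbb{D}\cong\mathrm{id}$, it satisfies $\mathbb{D}(A[1])=\mathbb{D}(A)[-1]$, and on Chern characters (recall $\beta=0$) it acts by $\ch_i\mapsto(-1)^i\ch_i$. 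Hence $Z_{\alpha,0,s}(\mathbb{D}(A))=-\overline{Z_{\alpha,0,s}(A)}$, which yields $\rho_{\mathbb{D}(A)}(\alpha,0)=\rho_A(\alpha,0)$, $\nu_{\alpha,0}(\mathbb{D}(A))=-\nu_{\alpha,0}(A)$ (with $-(+\infty)=-\infty\neq0$) and $\lambda_{\alpha,0,s}(\mathbb{D}(A))=-\lambda_{\alpha,0,s}(A)$ whenever the slopes are defined. Because $\mathbb{D}^2\cong\mathrm{id}$ reverses the sign of $\nu_{\alpha,0}$, and because shifting an object changes neither its $\sigma_{\alpha,0,s}$\-semistability nor the validity of the statement, it is enough to prove the single implication: if $E\in\mathcal{A}^{\alpha,0}$ is $\sigma_{\alpha,0,s}$\-semistable with $\nu_{\alpha,0}(E)\neq0$, then $\mathbb{D}(E)$ is $\sigma_{\alpha,0,s}$\-semistable (the reverse direction being obtained by applying this to $\mathbb{D}(E)$).

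The main step --- and the step I expect to be the real obstacle --- is to show $\mathbb{D}(E)\in\mathcal{A}^{\alpha,0}$. The idea is to push $\mathbb{D}$ through the two tilts defining $\mathcal{A}^{\alpha,0}$. For the first tilt one uses the standard tilt\-stability duality (cf.\ \cite{BMS}): a shift of $\RHom(-,\mathcal{O}_{\p3})$ carries $\coh^{0}(\p3)$ into a heart obtained from $\coh^{0}(\p3)$ by tilting, it sends $\nu_{\alpha,0}$\-semistable objects of nonzero tilt\-slope to $\nu_{\alpha,0}$\-semistable objects of the opposite tilt\-slope, and it exchanges $\mathcal{T}_{\alpha,0}$ with $\mathcal{F}_{\alpha,0}$ away from the locus $\nu_{\alpha,0}=0$; the only objects not carried into a single cohomological degree are those of dimension $\le1$ (at the first tilt) and those with $\nu_{\alpha,0}=0$ (at the second). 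Composing the two tilts one gets $\mathbb{D}(\mathcal{A}^{\alpha,0})\subseteq\langle\mathcal{A}^{\alpha,0},\mathcal{A}^{\alpha,0}[1]\rangle$, and for $E$ with $\nu_{\alpha,0}(E)\neq0$ --- which, when $\ch_1^{0}(E)\neq0$, is equivalent to $Z_{\alpha,0,s}(E)\notin\mathbb{R}$, i.e.\ to $E\notin\mathcal{P}_{\sigma_{\alpha,0,s}}(1)$, and is otherwise settled by the dimension\-$\le1$ discussion --- one checks, using the $\sigma_{\alpha,0,s}$\-semistability of $E$ to exclude a nontrivial two\-term decomposition, that $\mathbb{D}(E)$ cannot be a proper extension in $\langle\mathcal{A}^{\alpha,0},\mathcal{A}^{\alpha,0}[1]\rangle$ and hence lies in $\mathcal{A}^{\alpha,0}$. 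This is delicate precisely because it forces one to rule out both boundary phenomena, and it is exactly why the hypothesis $\nu_{\alpha,0}(E)\neq0$ cannot be dropped (already $\mathbb{D}(\mathcal{O}_x)=\mathcal{O}_x[-1]$ shows that the dual of a heart object can leave $\mathcal{A}^{\alpha,0}$).

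Once $\mathbb{D}(E)\in\mathcal{A}^{\alpha,0}$, the transfer of semistability is essentially formal. If $\mathbb{D}(E)$ were unstable, pass to its Harder--Narasimhan filtration and let $0\to F\to\mathbb{D}(E)\to G\to0$ be a destabilising sequence in $\mathcal{A}^{\alpha,0}$ with $G$ the last (minimal\-phase) Harder--Narasimhan factor; then $G$ is $\sigma_{\alpha,0,s}$\-semistable, and since its phase is strictly below that of $\mathbb{D}(E)$ one has $\nu_{\alpha,0}(G)\neq0$, unless $\mathbb{D}(E)$, hence $E$, already has phase $1$, in which case Lemma~\ref{limit_semistable_objects} and the earlier classification of $\sigma_{\alpha,0,s}$\-semistable objects give the conclusion directly. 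Applying $\mathbb{D}$ to this sequence and invoking the previous step for $G$ (and, after passing to the finitely many semistable pieces, for $F$) produces a short exact sequence $0\to\mathbb{D}(G)\to E\to\mathbb{D}(F)\to0$ in $\mathcal{A}^{\alpha,0}$ in which, by $\lambda_{\alpha,0,s}\circ\mathbb{D}=-\lambda_{\alpha,0,s}$, the subobject $\mathbb{D}(G)$ has strictly smaller phase than $E$. This contradicts the $\sigma_{\alpha,0,s}$\-semistability of $E$, so $\mathbb{D}(E)$ is $\sigma_{\alpha,0,s}$\-semistable, completing the argument.
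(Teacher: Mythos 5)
Your numerical bookkeeping is correct ($\ch_i\mapsto(-1)^i\ch_i$, $Z_{\alpha,0,s}\circ\mathbb{D}=-\overline{Z_{\alpha,0,s}}$, hence $\rho$ preserved and $\nu_{\alpha,0}$, $\lambda_{\alpha,0,s}$ negated), and you have correctly located the crux in the containment $\mathbb{D}(E)\in\mathcal{A}^{\alpha,0}$. But that containment is exactly where the proposal stops being a proof: both the inclusion $\mathbb{D}(\mathcal{A}^{\alpha,0})\subseteq\langle\mathcal{A}^{\alpha,0},\mathcal{A}^{\alpha,0}[1]\rangle$ and the exclusion of a nontrivial two-step decomposition are asserted (``one uses the standard tilt-stability duality'', ``one checks'') rather than carried out. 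Ruling out the decomposition is not a formality: for a triangle $B[1]\to\mathbb{D}(E)\to A$ with $A,B\in\mathcal{A}^{\alpha,0}$ one has $\operatorname{Im}Z(A)\ge0$ and $\operatorname{Im}Z(B[1])\le0$, so no contradiction with $\operatorname{Im}Z(\mathbb{D}(E))=\rho_E\ge0$ falls out, and one is forced into the careful analysis of how $\RHom(-,\mathcal{O})$ interacts with the two torsion pairs, including the dimension-$0$ and $\nu_{\alpha,0}=0$ loci that genuinely escape the heart (your own example $\mathbb{D}(\mathcal{O}_x)=\mathcal{O}_x[-1]$). That analysis is precisely the content of \cite[Proposition 4.3.6 and Remark 4.4.3]{BMT}, which the paper's proof simply cites: $\sigma_{\alpha,0,s}$ is self-dual, i.e.\ $\mathbb{D}(\mathcal{P}(0,1))[1]=\mathcal{P}(0,1)$ for $\mathbb{D}=\RHom(-,\mathcal{O})[1]$, and the proposition follows in one line. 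So your route is the same route with the black box opened but not actually emptied.

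There is a second, smaller gap in the descent step: the dual sequence $0\to\mathbb{D}(G)\to E\to\mathbb{D}(F)\to0$ is only exact in $\mathcal{A}^{\alpha,0}$ if $\mathbb{D}(F)\in\mathcal{A}^{\alpha,0}$, and $F$ may have Harder--Narasimhan factors of phase exactly $1$ (e.g.\ skyscrapers), to which your ``previous step'' does not apply. This one is repairable, and you do not need the short exact sequence at all: since $\mathbb{D}$ sends phase $\phi\in(0,1)$ to $1-\phi$, the minimal Harder--Narasimhan quotient $\mathbb{D}(E)\onto G$ (which has phase $<1$ whenever $\mathbb{D}(E)$ is unstable) dualizes to a nonzero morphism $\mathbb{D}(G)\to E$ between semistable objects of the heart with $\phi(\mathbb{D}(G))>\phi(E)$, and the vanishing of Homs from higher to lower phase already gives the contradiction. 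I would restructure the last step this way, and either cite \cite{BMT} for the heart-preservation statement or write out the torsion-pair analysis in full.
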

\begin{proof} From \cite[Proposition 4.3.6 and Remark 4.4.3]{BMT} we know that $\sigma_{\alpha,0,s}$ is a self-dual stability condition, thus
$$
\mathbb{D}(\mathcal{P}(0,1))[1]=\mathcal{P}(-1,0)[1]=\mathcal{P}(0,1),
$$
where $\mathbb{D}(E)=R\mathcal{H}om(E,\mathcal{O})[1]$. This says exactly that if $E$ is a $\lambda_{\alpha,0,s}$-semistable object with phase in $(0,1)$ then so is $E^{\vee}[2]$. 
\end{proof}
\begin{corollary} If $0\rightarrow A\rightarrow E\rightarrow B\rightarrow 0$ is a destabilizing short exact sequence in $\mathcal{A}^{\alpha,0}$ for an object $E$ with $\ch(E)=(-R,0,D,0)$ then the dual short exact sequence $0\rightarrow B^{\vee}[2]\rightarrow E^{\vee}[2]\rightarrow A^{\vee}[2]\rightarrow 0$ is a destabilizing sequence for $E^{\vee}[2]$ producing the same wall.
\end{corollary}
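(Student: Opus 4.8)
The plan is to deduce the statement formally from the self-duality of $\sigma_{\alpha,0,s}$ recorded in the preceding Proposition, which is the only substantive input. Write $\mathbb{D}(-):=R\mathcal{H}om(-,\mathcal{O})[1]$ for the contravariant autoequivalence of $D^{\rm b}(\mathbb{P}^3)$ used there, so that $\mathbb{D}(F)=F^{\vee}[1]$, $\mathbb{D}\circ\mathbb{D}\simeq\mathrm{id}$, and $\ch(F^{\vee}[2])=(\ch_0(F),-\ch_1(F),\ch_2(F),-\ch_3(F))$. First I would note that $\ch(E)=(-R,0,D,0)$ forces $Z_{\alpha,0,s}(E)=i\bigl(D+\tfrac{\alpha^2}{2}R\bigr)$, a positive multiple of $i$, so $E$ is $\sigma_{\alpha,0,s}$-semistable of phase $\tfrac12$; and since the given sequence realises an actual wall for $E$, the subobject $A$ and the quotient $B$ are successive extensions of $\sigma_{\alpha,0,s}$-semistable objects of that same phase $\tfrac12\in(0,1)$.

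Next I would apply $\mathbb{D}$ to the distinguished triangle $A\to E\to B\to A[1]$ and rotate, obtaining the distinguished triangle $\mathbb{D}(B)\to\mathbb{D}(E)\to\mathbb{D}(A)\to\mathbb{D}(B)[1]$; unwinding $\mathbb{D}(-)=(-)^{\vee}[1]$ and shifting once more, this is exactly
$$ B^{\vee}[2]\to E^{\vee}[2]\to A^{\vee}[2]\to B^{\vee}[3]. $$
Now $(-)^{\vee}[2]=\mathbb{D}(-)[1]$; by the Proposition above it preserves $\sigma_{\alpha,0,s}$-semistability of objects of phase in $(0,1)$, and a direct look at $Z_{\alpha,0,s}$ shows it reflects central charges across the imaginary axis, hence fixes phase $\tfrac12$. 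Consequently $E^{\vee}[2]$, $A^{\vee}[2]$ and $B^{\vee}[2]$ are again successive extensions of phase-$\tfrac12$ semistable objects, so all three lie in the heart $\mathcal{A}^{\alpha,0}$; therefore the triangle above is a short exact sequence $0\to B^{\vee}[2]\to E^{\vee}[2]\to A^{\vee}[2]\to0$ in $\mathcal{A}^{\alpha,0}$. As $\mathbb{D}$ is an anti-equivalence, $B^{\vee}[2]$ is a proper nonzero subobject of $E^{\vee}[2]$, and $\lambda_{\alpha,0,s}(B^{\vee}[2])=\lambda_{\alpha,0,s}(E^{\vee}[2])=0$, so this sequence destabilizes $E^{\vee}[2]$.

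Finally, to check that the wall is the same, set $\ch(A)=(r,c,d,e)$; then $\ch(A^{\vee}[2])=(r,-c,d,-e)$, and from $\ch(E^{\vee}[2])=(-R,0,D,0)$ also $\ch(B^{\vee}[2])=(-R-r,c,D-d,e)$. Hence the defining relation $\lambda_{\alpha,0,s}(A)=0$, namely $\bigl(s+\tfrac16\bigr)\alpha^2 c=e$, is word for word the relation $\lambda_{\alpha,0,s}(B^{\vee}[2])=0$, so $W_{B^{\vee}[2],A^{\vee}[2]}=W_{A,B}$. I anticipate no real obstacle; the only step that is not bookkeeping with triangles and Chern characters is the $t$-exactness of $(-)^{\vee}[2]$ on phase-$\tfrac12$ semistable objects for the t-structure with heart $\mathcal{A}^{\alpha,0}$, and that is exactly what the preceding Proposition --- via \cite[Proposition 4.3.6 and Remark 4.4.3]{BMT} --- supplies.
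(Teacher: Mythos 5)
Your argument is correct and follows essentially the same route as the paper, which states this corollary as an immediate consequence of the preceding self-duality proposition (via \cite[Proposition 4.3.6 and Remark 4.4.3]{BMT}) without writing out the details. You have simply supplied the routine verifications — that $A$, $E$, $B$ all lie in $\mathcal{P}(1/2)$ so the dualized triangle is again a short exact sequence in $\mathcal{A}^{\alpha,0}$, and the Chern character bookkeeping showing $W_{B^{\vee}[2],A^{\vee}[2]}=W_{A,B}$ — all of which check out.
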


\begin{example} Walls for $v=(0,0,3,0)$. In this case there are exactly two actual walls produced by the Chern characters in the table below. The first wall ($\alpha_0=\sqrt{7}$) contracts the locus of structure sheaves of planar cubics to ${\mathbb{P}^3}^*$. This is the only non-instanton component of the Gieseker moduli \cite[Proposition 19]{JMT}. Crossing this wall produces a nontrivial extension $E$ fitting into a diagram of the form
$$
 \xymatrix@=1.5pc{&\mathcal{I}_{p,H}(2)\ar[d]\ar@{=}[r]&\mathcal{I}_{p,H}(2)\ar[d]\\
\mathcal{O}_{H}(-1)[1]\ar[r]\ar@{=}[d]& E\ar[d]\ar[r]& \mathcal{O}_H(2)\ar[d]\\
\mathcal{O}_{H}(-1)[1]\ar[r]&U\ar[r]& \mathbb{C}_p }
$$
Notice that $\mathcal{O}_H(-1)[1]$, $\mathcal{O}_H(2)$, and $\mathcal{I}_{p,H}(2)$ are all unstable below the monad wall. From the exact sequences
\begin{align*}
    &0\rightarrow 2\mathcal{O}_H(1)\rightarrow\mathcal{I}_{p,H}(2)\rightarrow \mathcal{O}_H[1]\rightarrow 0,\\
    &0\rightarrow \mathcal{O}(-1)[1]\rightarrow \mathcal{O}_H(-1)[1]\rightarrow \mathcal{O}(-2)[2] \rightarrow 0,\\
    &0\rightarrow \mathcal{O}(1)\rightarrow \mathcal{O}_H(1)\rightarrow \mathcal{O}[1] \rightarrow 0,\\
    &0\rightarrow \mathcal{O}[1]\rightarrow \mathcal{O}_H[1]\rightarrow \mathcal{O}(-1)[2] \rightarrow 0,
\end{align*}
one sees that the stable factors of all such $E$ at the monad wall are precisely the generators of the quiver heart described in Lemma  \ref{quiver_region}.
\begin{center}
\begin{tabular}{|c|c|c|c|}
\hline
& & &
\\
$\ch(A)$ & $\ch_0(A)$ & $\alpha_0$ & Possible destabilizing sequence\\
 & & & 
\\
\hline
& & & 
\\
$\displaystyle \left(r,1,\frac{3}{2},\frac{7}{6}\right)$ & $\displaystyle  r=0$ & $\sqrt{7}$ & $\displaystyle 0\rightarrow  \mathcal{O}_H(2)\rightarrow \mathcal{O}_C(2)\rightarrow \mathcal{O}_H(-1)[1]\rightarrow 0$,\ $\deg(C)=3$.\\
& & & \\
\hline
& & & \\
$\displaystyle \left(r,1,\frac{1}{2},\frac{1}{6}\right)$ & $\displaystyle -5\leq r\leq 1$ & 1 & $ 0\rightarrow  \mathcal{O}(1)\rightarrow E\rightarrow \mathcal{R}[1]\rightarrow 0$\\
& & & \\
$\displaystyle \left(r,2,1,\frac{1}{3}\right)$ & $-4\leq r\leq 2$ & & $ 0\rightarrow  2\mathcal{O}(1)\rightarrow E\rightarrow \mathcal{S}[1]\rightarrow 0$\\
& & &\\
$\displaystyle \left(r,3,\frac{3}{2},\frac{1}{2}\right)$ & $-3\leq r\leq 3$ & & $ 0\rightarrow  3\mathcal{O}(1)\rightarrow E\rightarrow \mathcal{K}[1]\rightarrow 0$\\
& & &\\
$\displaystyle \left(r,1,\frac{5}{2},\frac{1}{6}\right)$ & $-1\leq r\leq 5$ & & $ 0\rightarrow  \mathcal{R}^{\vee}[1]\rightarrow E^{\vee}[2]\rightarrow \mathcal{O}(-1)[2]\rightarrow 0$\\
& & &\\
$\displaystyle \left(r,2,2,\frac{1}{3}\right)$ & $-2\leq r\leq 4$ & & $ 0\rightarrow  \mathcal{S}^{\vee}[1]\rightarrow E^{\vee}[2]\rightarrow 2\mathcal{O}(-1)[2]\rightarrow 0$\\
& & &\\
$\displaystyle \left(r,1,\frac{3}{2},\frac{1}{6}\right)$& $-3\leq r\leq 3$ &1 & $\displaystyle 0\rightarrow \mathcal{I}_{p,H}(2)\rightarrow E\rightarrow U\rightarrow 0$\\
& & &\\
\hline
%& & & \\
%\textcolor{red}{$\displaystyle \left(r,7,\frac{3}{2},\frac{1}{6}\right)$}& \textcolor{red}{$-21\leq r\leq 21$} & \textcolor{red}{$\frac{1}{\sqrt{7}}$} & \\
%& & & \\
%\hline
\end{tabular}
\end{center}

\end{example}

\bibliographystyle{hep}
\bibliography{referencesJMM21.bib}

\end{document}